\newcommand{\nc}{\newcommand}
\nc{\dmo}{\DeclareMathOperator}
\dmo{\ra}{\rightarrow}
\dmo{\Prob}{\mathbb{P}}
\dmo{\E}{\mathbb{E}}
\dmo{\N}{\mathbb{N}}
\dmo{\Z}{\mathbb{Z}}
\dmo{\Q}{\mathbb{Q}}
\dmo{\R}{\mathbb{R}}
\dmo{\C}{\mathcal{C}}
\dmo{\X}{\mathcal{X}}
\dmo{\U}{\mathcal{U}}
\dmo{\T}{\mathcal{T}}
\dmo{\F}{\mathcal{F}}
\dmo{\AC}{\mathcal{AC}}
\dmo{\w}{\omega}
\dmo{\MIN}{\mathcal{MIN}}
\dmo{\Mod}{Mod}
\dmo{\PMod}{PMod}
\dmo{\PMF}{\mathcal{PMF}}
\dmo{\Mat}{Mat}
\dmo{\Hdim}{Hdim}
\dmo{\supp}{supp}
\dmo{\UE}{\mathcal{UE}}
\dmo{\vol}{vol}
\dmo{\B}{B}
\dmo{\PB}{PB}
\dmo{\PR}{PSL(2,\mathbb{R})}
\dmo{\GL}{GL(k, \mathbb{C})}
\dmo{\SL}{SL(2, \mathbb{Z})}
\dmo{\Isom}{Isom}
\dmo{\RP}{\mathbb{R} \mathrm{P}}
\dmo{\I}{\mathcal{I}}
\dmo{\el}{\ell_{\C}}
\dmo{\NN}{\mathcal{N}}
\dmo{\Hd}{Hdim}
\dmo{\rk}{rank}
\dmo{\tr}{tr}
\dmo{\llangle}{\langle\langle}
\dmo{\rrangle}{\rangle\rangle}
\dmo{\Unif}{Unif}
\dmo{\Out}{Out}
\dmo{\sumRho}{\mathcal{N}}
\dmo{\stopping}{\vartheta}
\dmo{\diam}{\operatorname{diam}}
\tikzset{->-/.style={decoration={
  markings,
  mark=at position #1 with {\arrow{>}}},postaction={decorate}}}
\nc{\nt}{\newtheorem}
\newtheorem{thm}{{\bf Theorem}}[section]
\newtheorem{lem}[thm]{{\bf Lemma}}
\newtheorem{cor}[thm]{{\bf Corollary}}
\newtheorem{prop}[thm]{{\bf Proposition}}
\newtheorem{fact}[thm]{Fact}
\newtheorem{claim}[thm]{Claim} 
\newtheorem{remark}[thm]{Remark}
\newtheorem{dfn}[thm]{Definition}
\numberwithin{equation}{section}
\newtheorem{obs}[thm]{Observation}
\title[Hausdorff dimension and sublinearly conical limit set]{Hausdorff dimension of sublinearly conical Myrberg limit set}
\date{\today}
\author{Inhyeok Choi}
\address{%
		School of Mathematics, KIAS\\
		85 Hoegi-ro, Dongdaemun-gu, Seoul 02455, South Korea
}
\email{
        inhyeokchoi48@gmail.com
        }
\begin{document}
\begin{abstract}

Let $\Gamma$ be a non-elementary, non-convex-cocompact Kleinian group acting on $\mathbb{H}^{d}$. We show that the Hausdorff dimension of the sublinearly conical Myrberg limit set of $\Gamma$ is equal to the critical exponent of $\Gamma$. This gives a different proof of a theorem by M. Mj and W. Yang. Our approach is a variant of M. Mj and W. Yang's technique in the direction of the Patterson--Sullivan theory.

\noindent{\bf Keywords.} Hausdorff measure, limit set, Kleinian group, contracting element, Patterson-Sullivan measure

\noindent{\bf MSC classes:} 20F67, 30F60, 57K20, 57M60, 60G50
\end{abstract}

\maketitle
{\small
\begin{quote}
The algorithms that generate the other fractals are typically extraordinarily short, as to look positively dumb.

- B. B. Mandelbrot. Proc. R. Soc. Lond. A. 423, 3--16 (1989). 
\end{quote}
}

%
%

\section{Introduction}	\label{sec:introduction}

Let $\Gamma$ be a non-elementary discrete subgroup of $\operatorname{SO}(d, 1)$. Then it acts properly on $\mathbb{H}^{d}$ with basepoint $x_0$. The limit set $\Lambda \Gamma$ of $\Gamma$ is the set of accumulation points of the orbit $\Gamma \cdot x_0$ in $\mathbb{H}^{d} \cup \partial \mathbb{H}^{d}$. It is the smallest nonempty $\Gamma$-invariant closed subset in $\mathbb{H}^{d}$, and captures the dynamics of $\Gamma$ such as growth, equidistribution, etc.

There is a distinguished subset of $\Lambda \Gamma$ that corresponds to geodesic rays that return infinitely often to a compact part of the quotient space $\mathbb{H}^{d} /\Gamma$. Among such rays, some rays forever stay in a compact part of $\mathbb{H}^{d}/\Gamma$. To formulate this, given a boundary point $\xi \in \partial \mathbb{H}^{d}$, let $\gamma : [0, +\infty) \rightarrow \mathbb{H}^{d}$ be the length parametrization of the geodesic ray $[x_{0}, \xi)$ and let \[
f_{\xi}(t) := d_{\mathbb{H}^{d}} \big(\gamma(t), \,\Gamma \cdot x_{0} \big).
\]
We define the conical limit set and the uniformly conical limit set by\[\begin{aligned}
\Lambda_{c} \Gamma &:= \big\{ \xi : \liminf_{t\rightarrow +\infty} f_{\xi}(t) < +\infty\big\},\\
\Lambda_{uc} \Gamma &:= \big\{ \xi : \sup_{t\rightarrow +\infty} f_{\xi}(t) < +\infty\big\}.
\end{aligned}
\]

When $\Gamma$ is cocompact, all of $\Lambda \Gamma$, $\Lambda_{c} \Gamma$ and $\Lambda_{uc}\Gamma$ are equal to the entire boundary $\partial \mathbb{H}^{d}$ and every geodesic ray on $\mathbb{H}^{d}$ is uniformly wrapped in $\mathbb{H}^{d}/\Gamma$, i.e., $f_{\xi}(t)$ is uniformly bounded for $t \in [0, +\infty)$. More generally, we have $\Lambda \Gamma = \Lambda_{c}\Gamma = \Lambda_{uc} \Gamma$ when $\Gamma$ is \emph{convex-cocompact} (see Definition \ref{dfn:convexccpt}). In this case, we have a dichotomy for boundary points: for each $\xi \in \partial \mathbb{H}^{d}$, there exists $C>0$ such that either $f_{\xi}(t)\le C$ for all $t>0$ or $f_{\xi}(t) \ge t-C$ for all $t > 0$. In other words, each ray $\gamma = [x_{0}, \xi)$ on $\mathbb{H}^{d}$ either stays in a bounded neighborhood of the $\Gamma$-orbit (when $\xi \in \Lambda \Gamma$) or moves away from the $\Gamma$-orbit in the fastest way (when $\xi \notin \Lambda \Gamma$). 

When $\Gamma$ is non-elementary but not convex-cocompact, some points of $\Lambda _{c}\Gamma$ correspond to geodesic rays that do not stay in a compact set of $\mathbb{H}^{d}/\Gamma$. In particular, the nonuniformly conical limit set $\Lambda_{nuc} \Gamma := \Lambda_{c}\Gamma \setminus \Lambda_{uc}\Gamma$ is nonempty. One may then ask how large $\Lambda_{uc}\Gamma$ and $\Lambda_{nuc}\Gamma$ are compared to $\Lambda_{c} \Gamma$. 

One way to measure the largeness of a subset of $\partial \mathbb{H}^{d}$ is to compute its \emph{Hausdorff dimension}. We endow $\partial \mathbb{H}^{d}$ with the visual metric \[
d_{vis}(\xi, \eta) := \measuredangle \xi x_0 \eta \quad (\forall \xi, \eta \in \partial \mathbb{H}^{d}).
\]
Now let $A \subseteq \partial \mathbb{H}^{d}$. We can define \[
H_{\delta }^{s}(A)=\inf \left\{\sum _{i=1}^{\infty }(\operatorname {diam} U_{i})^{s}: U_{i} \,\,\textrm{open},\,\,\bigcup _{i=1}^{\infty }U_{i}\supseteq A,\operatorname {diam} U_{i}<\delta \right\}.
\]
Then the Hausdorff dimension of $A$ is defined as\[
\Hdim(A) := \inf \left\{ s : \lim_{\delta \rightarrow 0} H_{\delta}^{s} (A)=0 \right\} = \sup \left\{ s : \lim_{\delta \rightarrow 0} H_{\delta}^{s} (A)=+\infty\right\}.
\]Our goal is to relate the Hausdorff dimensions of $\Lambda_{uc} \Gamma$ and $\Lambda_{nuc} \Gamma$ with another quantity.

A key numerical invariant of $\Gamma$ is its critical exponent, which describes the exponential growth rate of the $\Gamma$-orbit of $x_0$. This is defined via the Poincar{\'e} series \begin{equation}\label{eqn:Poincare}
\mathcal{P}_{\Gamma, x_{0}}(s) := \sum_{g \in \Gamma} e^{-s d(x_{0}, gx_{0})}.
\end{equation}
The critical exponent of $\Gamma$, denoted $\delta_{\Gamma}$, is the unique number such that $\mathcal{P}_{\Gamma, x_{0}} (s)$ diverges for $s < \delta_{\Gamma}$ and converges for $s > \delta_{\Gamma}$. When $\Gamma$ is non-elementary, it is always positive.

S. J. Patterson \cite{patterson1976the-limit} and D. Sullivan \cite{sullivan1979density} developed a powerful theory that relates the critical exponent, conformal densities and the Hausdorff dimension of the conical limit set of Fuchsian groups. This was generalized to non-Fuchsian Kleinian groups by C. J. Bishop and P. W. Jones as follows:

\begin{thm}[{\cite[Theorem 1.1]{bishop1997hausdorff}}]
Let $\Gamma$ be a non-elementary discrete group acting properly on a hyperbolic space $\mathbb{H}^{d}$. Then we have \[
\Hd(\Lambda_{c} \Gamma) = \Hd(\Lambda_{uc} \Gamma) = \delta_{\Gamma}.
\]
\end{thm}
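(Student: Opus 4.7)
My plan is to establish the two nontrivial inequalities in the chain
$$\Hdim(\Lambda_{uc}\Gamma) \;\le\; \Hdim(\Lambda_c \Gamma) \;\le\; \delta_\Gamma \;\le\; \Hdim(\Lambda_{uc}\Gamma),$$
the first of which is immediate since $\Lambda_{uc}\Gamma \subseteq \Lambda_c\Gamma$. The middle and right inequalities carry the real content and require different techniques: a covering argument via the Poincar\'e series for the upper bound, and an explicit ping-pong/Schottky construction for the lower bound.

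For the upper bound $\Hdim(\Lambda_c\Gamma) \le \delta_\Gamma$, I would use a shadow covering argument. Observe that $\xi \in \Lambda_c\Gamma$ if and only if, for some $R>0$, the ray $[x_0,\xi)$ passes within $R$ of $gx_0$ for infinitely many $g \in \Gamma$; equivalently, $\xi$ lies in infinitely many shadows $\mathrm{Shadow}_{x_0}(B(gx_0, R))$. Writing $\Lambda_c\Gamma$ as a countable union over $R \in \mathbb{N}$, it suffices to fix $R$ and estimate each piece. A standard visual geometry computation gives that the visual diameter of $\mathrm{Shadow}_{x_0}(B(gx_0,R))$ is bounded above by a constant (depending on $R$) times $e^{-d(x_0,gx_0)}$. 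For any $s>\delta_\Gamma$, the Poincar\'e series $\sum_g e^{-sd(x_0,gx_0)}$ converges, so the tail sums $\sum_{d(x_0,gx_0)\ge N}(\operatorname{diam})^s$ tend to $0$ as $N \to \infty$. Since these shadows cover the relevant piece of $\Lambda_c\Gamma$ for every $N$, this forces $H^s(\Lambda_c\Gamma)=0$, hence $\Hdim(\Lambda_c\Gamma)\le s$. Letting $s \searrow \delta_\Gamma$ completes this direction.

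The bulk of the work is the lower bound $\Hdim(\Lambda_{uc}\Gamma) \ge \delta_\Gamma$. For each $s<\delta_\Gamma$, I would construct a Cantor-type subset $E\subseteq \Lambda_{uc}\Gamma$ with $\Hdim(E)\ge s$ via a Schottky iterated function system. Concretely, I seek an annulus $A(x_0;T,T+R)$ and a finite family $\{g_1,\ldots,g_K\}\subseteq \Gamma$ of orbit representatives in that annulus whose shadows are pairwise at definite visual separation, with $K\cdot e^{-sT}\gtrsim 1$. Words in the $g_i$ then produce at level $n$ a collection of $K^n$ nested disjoint shadows of visual diameter comparable to $e^{-nT}$; their nested intersection $E$ is a Cantor set with $\Hdim(E)\ge \log K / T \ge s$ via the standard mass-distribution principle applied to the uniform Bernoulli measure on these words. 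Crucially, the geodesic ray to any $\xi\in E$ passes within $R$ of some orbit point at each scale $nT$ by construction, so $E\subseteq \Lambda_{uc}\Gamma$ with uniform constant depending only on $R$.

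The main obstacle is producing the Schottky data: one needs \emph{simultaneously} enough orbit points in a single annulus and pairwise disjoint shadows. Divergence of the Poincar\'e series at $s<\delta_\Gamma$ tells us only that \emph{summing over all radii} there are enough points; a pigeonhole over annuli yields some annulus with at least a constant times $e^{sT}$ orbit points (up to a polynomial loss in $T$), but then one must extract a subfamily whose shadows are disjoint. This requires a finer angular distribution argument, e.g.\ partitioning the visual sphere into caps of size $\asymp e^{-T}$ and extracting a maximal disjoint subfamily; the loss factor is polynomial in $e^{T}$, so after absorbing it one still retains $\gg e^{(s-\varepsilon)T}$ generators, which suffices after replacing $s$ by $s-\varepsilon$. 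This is the step where the non-elementary hypothesis and the hyperbolic geometry enter decisively; once this combinatorial lemma is in place, the tree computation and the verification of uniform conicality are routine.
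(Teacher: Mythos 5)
The paper does not itself prove the Bishop--Jones theorem; it cites it and then proves a refinement by a related but distinct construction. So I will assess your argument on its own terms and, where helpful, point out how the paper's machinery addresses the same difficulty.

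Your upper bound argument (covering $\Lambda_c\Gamma$ by shadows, summing $(\operatorname{diam})^s$ against the convergent Poincar\'e series for $s>\delta_\Gamma$) is correct and standard; this part is fine.

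The lower bound has a genuine gap in the Schottky/IFS step, and it is exactly the issue the paper's construction is engineered to avoid. You arrange that the $g_1,\dots,g_K$ lie in one annulus around $x_0$ and have pairwise disjoint shadows $\mathrm{Shadow}_{x_0}(B(g_ix_0,R))$, and then claim that length-$n$ words produce $K^n$ nested disjoint shadows. Disjointness \emph{at a fixed level under a common prefix} does follow by translating your one-sided separation by the prefix. But \emph{nesting across levels} does not: for $\mathrm{Shadow}(wg_{i_{n+1}}x_0)$ to sit inside $\mathrm{Shadow}(wx_0)$ you need $[x_0, wg_{i_{n+1}}x_0]$ to pass near $wx_0$, i.e.\ $(w^{-1}x_0\,|\,g_{i_{n+1}}x_0)_{x_0}$ to be uniformly small. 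Since $w^{-1}x_0$ points roughly toward $g_{i_n}^{-1}x_0$, this is a condition on $(g_{i_n}^{-1}x_0\,|\,g_{i_{n+1}}x_0)_{x_0}$ --- a \emph{two-sided} (ping-pong) separation involving the shadows of both $g_i x_0$ and $g_i^{-1}x_0$. Your selection only controls the forward shadows; nothing prevents, say, $g_{i_{n+1}}$ from nearly cancelling $g_{i_n}$, collapsing the word and destroying the tree. This is the combinatorial lemma you flagged as ``the step where the non-elementary hypothesis enters decisively,'' and the plan as written does not supply it.

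Two standard ways to close the gap. One is the route the paper itself uses for its semigroup $F$: fix a separator $a$ (Gromov-separated from a companion $b$), use the finite-to-one reduction map $\Phi$ (Fact 3.4) to force each letter into Property (3.1), and compose via $\mathcal{F}(g_1,\dots,g_n)=g_1 a g_2 a\cdots g_n$. Inserting $a$ between consecutive letters guarantees the $(C,D)$-chain condition (Fact 3.6, Lemma 3.7) regardless of the relative positions of $g_{i_n}^{-1}x_0$ and $g_{i_{n+1}}x_0$, which is precisely what makes the iterated shadows nest and keeps the ``alphabet'' the same at every level so that the tree is homogeneous. The other route is the one Bishop and Jones actually take: build a non-homogeneous tree in which the children of a vertex $v$ are chosen \emph{relative to} $g_vx_0$ (a fresh ``good annulus'' centered at $g_vx_0$), so nesting is automatic and only one-sided separation from the current vertex is needed at each step, at the cost of losing the clean semigroup/IFS structure. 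Either fix would make your sketch correct; without one of them, the claimed $K^n$ disjoint nested shadows need not exist.
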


Meanwhile, there are Kleinian groups $\Gamma$ for which nonuniformly conical limit points are generic  in $\Lambda \Gamma$ with respect to the Patterson--Sullivan measure. This motivates the study of how large the nonuniformly conical limit set is.

In \cite{yang2025hausdorff}, M. Mj and W. Yang proved that the Hausdorff dimensions of the  nonuniformly conical limit set and the conical limit set are the same:

\begin{thm}[{\cite[Theorem 1.10]{yang2025hausdorff}}]\label{thm:mainYang}
Let $\Gamma$ be a non-elementary discrete group acting properly on a hyperbolic space $\mathbb{H}^{d}$. Then we have \[
\Hd(\Lambda_{c} \Gamma) = \Hd(\Lambda_{uc} \Gamma) = \Hd(\Lambda_{nuc}\Gamma)= \delta_{\Gamma}.
\]
\end{thm}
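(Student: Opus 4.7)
The upper bound $\Hd(\Lambda_{nuc}\Gamma) \le \delta_\Gamma$ is immediate from the inclusion $\Lambda_{nuc}\Gamma \subseteq \Lambda_{c}\Gamma$ and the Bishop--Jones theorem quoted above, so the content lies entirely in proving $\Hd(\Lambda_{nuc}\Gamma) \ge \delta_\Gamma$. Following the title of the paper, I factor this through the \emph{sublinearly conical Myrberg limit set} $\Lambda^{\star}\Gamma$, namely the set of $\xi \in \Lambda\Gamma$ whose geodesic ray $[x_0,\xi)$ satisfies $f_\xi(t) = o(t)$ as $t \to \infty$ and whose $\Gamma$-orbit is dense in the unit tangent bundle $T^{1}(\mathbb{H}^{d}/\Gamma)$ (the Myrberg density condition). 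When $\Gamma$ is non-convex-cocompact, the convex core of $\mathbb{H}^{d}/\Gamma$ is non-compact, so the Myrberg condition forces $f_\xi$ to be unbounded along a subsequence, while the $o(t)$ bound keeps $\liminf f_\xi$ finite. Hence $\Lambda^{\star}\Gamma \subseteq \Lambda_{nuc}\Gamma$, and it suffices to show $\Hd(\Lambda^{\star}\Gamma) \ge \delta_\Gamma$.

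For this lower bound, I would build, for each $\epsilon > 0$, a Frostman-type probability measure $\nu$ supported on a Cantor-like subset $E \subseteq \Lambda^{\star}\Gamma$ satisfying $\nu(B(\xi,r)) \lesssim r^{\delta_\Gamma - \epsilon}$. The skeleton is a tree of shadows produced from a Patterson--Sullivan $\delta_\Gamma$-conformal density $\mu_{x_0}$: at generation $n$, choose a well-separated collection of shadows of $\Gamma$-orbit points at scale $\asymp e^{-R_n}$ whose cardinality is at least $e^{(\delta_\Gamma - \epsilon/2) R_n}$, which exists by the very definition of $\delta_\Gamma$ via the Poincar\'e series \eqref{eqn:Poincare}. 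The shadow lemma then converts this combinatorial count into the desired local Frostman bound, and the mass distribution principle yields $\Hd(E) \ge \delta_\Gamma - \epsilon$.

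The main obstacle is weaving the sublinear-excursion condition and the Myrberg density condition into this tree without eroding the Frostman exponent. My plan is to replace each base orbit element at generation $n$ by its composition with a prescribed guide word $w_n$ that, on the one hand, detours the corresponding geodesic to depth $r_n$ in a non-compact direction of $\mathbb{H}^{d}/\Gamma$, with $r_n \to \infty$ and $r_n = o(R_n)$, thereby ensuring $f_\xi(t) = o(t)$ for every $\xi \in E$; and on the other hand, threads the $n$-th element of a fixed countable base of cylinders in $T^{1}(\mathbb{H}^{d}/\Gamma)$ along the same segment, enforcing the Myrberg condition in the limit. The delicate point is that the enforcement of $w_n$ should cost only a factor $e^{o(R_n)}$ in the counting, so that the Frostman exponent is unchanged up to the prescribed loss $\epsilon$. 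I expect this to reduce to the divergence of a Patterson-modified Poincar\'e series at $s = \delta_\Gamma$, which provides enough spare mass per branch to absorb the guide words; letting $\epsilon \to 0$ then completes the proof.
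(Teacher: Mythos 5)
Your proposal takes a genuinely different route from the paper, and it is worth seeing the contrast clearly. The paper does not build a generation-dependent Cantor tree. Instead it constructs a free subsemigroup $F \subseteq \Gamma$ from a \emph{homogeneous} alphabet $K \cup \{\varphi_1,\varphi_2,\ldots\}$ (the same alphabet at every generation), arranges by a delicate nested-interval argument for the Poincar\'e series of $F$ to diverge at its own critical exponent $\delta_F \ge \delta_\Gamma - \epsilon$, and then runs honest Patterson--Sullivan theory for $F$: a shadow principle (Proposition~\ref{prop:Shadow}), a Hopf--Tsuji--Sullivan-type argument showing $\mu$-a.e.\ point is conical, a Borel--Cantelli argument showing $\mu$-a.e.\ point is sublinearly conical, and Nicholls' estimate to convert the shadow bound into a Hausdorff dimension lower bound. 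Your plan --- a tree of shadows at scales $R_n$ with a distinct ``guide word'' $w_n$ spliced in at generation $n$ --- is precisely the inhomogeneous quasi-radial tree of Mj--Yang that the paper explicitly departs from (see the paragraph after Theorem~\ref{thm:main}). What the paper's homogeneity buys is that the object is an actual semigroup, so the Frostman-type local bound comes from conformality of a bona fide PS measure rather than having to be propagated by hand through an inhomogeneous construction.

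Two concrete gaps in your sketch. First, the statement that the guide words ``cost only a factor $e^{o(R_n)}$'' is the entire difficulty, not a side issue; you cannot just append words and expect the combinatorial count or, more importantly, the \emph{divergence} of the resulting Poincar\'e series to survive. The paper's Section~4 is devoted exactly to this: choosing $R_k \to \infty$ and thresholds $\epsilon_j$ so that adding each new long element $\varphi_k$ to the alphabet neither drops the critical exponent nor destroys divergence type (this is Conditions 1--4 there). Your appeal to ``divergence of a Patterson-modified Poincar\'e series at $s=\delta_\Gamma$'' is not a theorem; generic groups can be of convergence type at $\delta_\Gamma$, and the whole point is to engineer a sub-object that is of divergence type. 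Second, you have not addressed separation/injectivity of the shadows: to get a Frostman bound you need the shadows at each level to be essentially disjoint, which in the paper is Lemmas~\ref{lem:GromNestingPre}--\ref{lem:GromNestDirect}, and these rely on the alphabet being $0.004\epsilon R_0$-separated and on every letter having Property~\eqref{eqn:propA}; an arbitrary choice of guide words $w_n$ will not automatically satisfy this. Finally, a small but real point: as stated, the theorem is false for convex-cocompact $\Gamma$ (where $\Lambda_{nuc}\Gamma = \emptyset$); your discussion implicitly excludes this case, but the statement you are proving should carry the non-convex-cocompact hypothesis explicitly, as the paper's Theorem~\ref{thm:main} does.
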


In fact, they prove this theorem by focusing on more specific boundary points, namely, the Myrberg limit points. A boundary point $\xi \in \Lambda \Gamma$ is called \emph{Myrberg} if the $\Gamma$-orbit closure of $[x_{0}, \xi)$ contains every geodesic connecting a pair of distinct points $\eta, \zeta \in \Lambda \Gamma$. Equivalently, $\xi \in \Lambda \Gamma$ is Myrberg if there exists a constant $K>0$ such that, for any $g \in \Gamma$ there exists $h \in \Gamma$ such that $g [x_0, hx_0 ]$ is contained in the $K$-neighborhood of $[x_0, \xi)$.

The collection of the Myrberg limit points is denoted by $\Lambda_{Myr}\Gamma$. When $\Gamma$ is not convex-cocompact, every Myrberg limit point is nonuniformly conical, and indeed M. Mj and W. Yang showed that $\Hd(\Lambda_{Myr}\Gamma) = \delta_{\Gamma}$.

Let us define the sublinear growth limit set by \[
\Lambda_{sublinear}\Gamma := \big\{ \xi \in \Lambda_{c} \Gamma: \lim_{t \rightarrow +\infty} f_{\xi}(t)/t  = 0\big\}.
\]
For many non-convex-cocompact groups $\Gamma$, the generic points of $\Lambda \Gamma$ with respect to the Patterson--Sullivan measure are contained in $\Lambda_{sublinear}\Gamma$. In fact, M. Mj and W. Yang's method provides the same estimate as in Theorem \ref{thm:mainYang} for the Hausdorff dimension of $\Lambda_{sublinear}\Gamma \cap \Lambda_{Myr}$.

We now explain our contribution. Given a non-elementary Kleinian group $\Gamma$, we  construct a  subsemigroup of $\Gamma$ that is suited for the Patterson--Sullivan theory. Our main technical result is as follows. 

\begin{theorem}\label{thm:semiconvexConst}
Let $\Gamma$ be a non-elementary discrete group acting properly on a hyperbolic space $\mathbb{H}^{d} \ni x_0$, and let $\epsilon>0$. Then there exists $K>0$ and a free subsemigroup $F \subseteq \Gamma$ such that the following properties hold.
\begin{enumerate}
\item (Divergence) The Poincar{\'e} series $\mathcal{P}_{F}(s) := \sum_{g \in F} e^{-s d(x_0, gx_0) }$ diverges at the critical exponent $\delta_{F}$ of $F$.
\item (Approximation) The inequality $\delta_{F} \ge \delta_{\Gamma} -\epsilon$ holds.
\item (Myrberg property) For any $g \in \Gamma$, there exists $f \in F$ such that the $K$-neighborhood of the geodesic $[x_0, fx_0]$ contains a $\Gamma$-translate of  the geodesic $[x_0, gx_0]$.
\end{enumerate}
\end{theorem}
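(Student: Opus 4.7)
The plan is to build $F$ as a free Schottky subsemigroup of $\Gamma$ in two stages. First I would construct a finite Schottky family $F_0 \subseteq \Gamma$ satisfying the divergence and approximation properties (1) and (2); then I would adjoin an infinite sequence of further Schottky-type generators that witness the Myrberg property (3) for each element of an enumeration $\Gamma = \{\gamma_1, \gamma_2, \ldots\}$, with translation lengths growing fast enough to keep the Poincaré structure of $F_0$ under control.

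For the first stage, since $\Gamma$ is non-elementary, one can select loxodromic elements $a_1, \ldots, a_k \in \Gamma$ with pairwise well-separated fixed points on $\partial \mathbb{H}^{d}$, so that the ping-pong lemma produces a free subsemigroup $F_0 = \langle a_1, \ldots, a_k\rangle$. A standard shadow-counting estimate (choosing the $a_i$ from $\Gamma$-orbit points filling a sufficiently fat annulus) ensures that $k$ and the $a_i$ can be taken so that $\delta_{F_0} \ge \delta_{\Gamma} - \epsilon$. The Schottky condition further yields the concatenation estimate $d(x_0, wx_0) = \sum_j \ell(a_{w_j}) + O(1)$ per letter, so $\mathcal{P}_{F_0}(s)$ is comparable, up to uniform constants, to the geometric series $\sum_n T(s)^n$ with $T(s) = \sum_i e^{-s\ell(a_i)}$. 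The critical exponent $\delta_{F_0}$ is characterized by $T(\delta_{F_0}) = 1$, and at that value the term-by-term sum $\sum_n 1$ diverges, so $F_0$ realizes (1) and (2); only (3) remains to be built in.

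For the Myrberg enhancement, fix an anchor $a \in F_0$ with a strongly contracting axis. For each $i$, invoke a Mj--Yang-style extension lemma to choose exponents $p_i, q_i \in \mathbb{N}$ such that $f_i := a^{p_i}\gamma_i a^{q_i}$ is loxodromic, satisfies a uniform Schottky condition against $\{a_1, \ldots, a_k, f_1, \ldots, f_{i-1}\}$, and the geodesic $[x_0, f_ix_0]$ $K$-fellow-travels the broken path $[x_0, a^{p_i}x_0] \cup a^{p_i}[x_0, \gamma_i x_0] \cup a^{p_i}\gamma_i[x_0, a^{q_i}x_0]$ with $K$ depending only on the anchor's Schottky constant. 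The middle segment is a $\Gamma$-translate of $[x_0, \gamma_i x_0]$, so $f_i$ is a Myrberg witness for $\gamma_i$. I would additionally require $p_i, q_i$ large enough that $\ell(f_i) \ge \ell(\gamma_i) + 2\log i / \delta_{F_0}$, forcing $\sum_i e^{-\delta_{F_0}\ell(f_i)} < \infty$. Setting $F = \langle a_1, \ldots, a_k, f_1, f_2, \ldots\rangle$, the extended Schottky condition yields freeness, $\delta_F \ge \delta_{F_0}$ preserves (2), and the same Schottky comparison for the enlarged Poincaré series gives (1) at $\delta_F$ automatically, while (3) holds by construction.

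The main obstacle is maintaining a uniform Schottky constant across an infinitely generated family while still controlling $\ell(f_i)$ precisely enough for the Poincaré bookkeeping. The sandwich construction $a^{p_i}\gamma_i a^{q_i}$ is what reconciles these tensions: the anchor $a$ provides a fixed Schottky behavior that every new generator inherits (independent of the arbitrary middle piece $\gamma_i$), and the free parameters $p_i, q_i$ can then be tuned both to pass the Schottky test against all earlier generators and to meet the summability requirement, without interfering with the Myrberg fellow-traveling since the constant $K$ depends only on the anchor. Once this uniformity is in place, divergence, approximation, and the Myrberg property decouple cleanly and can be verified one after the other.
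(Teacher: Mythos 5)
Your overall architecture---a finite Schottky alphabet engineered to capture most of $\delta_\Gamma$, then an infinite family of ``sandwich'' generators $a^{p_i}\gamma_i a^{q_i}$ with rapidly growing lengths to secure the Myrberg property---matches the paper's in spirit. The separator $a$ plays the same role as the paper's element $a$ in the map $\mathcal{F}(g_1,\ldots,g_n)=g_1 a g_2 a \cdots g_n$, the shadow-counting for a fat annulus is exactly how the paper builds its finite set $K$, and the imposition of fast-growing $\ell(f_i)$ is the analogue of the paper's requirement $\|\varphi_k\|\ge R_k$ with $R_k$ growing geometrically.

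The genuine gap is the sentence ``the same Schottky comparison for the enlarged Poincar\'e series gives (1) at $\delta_F$ automatically.'' This is false and is precisely the crux of the theorem. The Schottky comparison you invoke reads $\|w_1\cdots w_n\|=\sum_j\ell(w_j)+O(n)$, so $\mathcal{P}_F(s)$ is only squeezed between $\sum_n\bigl(e^{-Cs}T(s)\bigr)^n$ and $\sum_n\bigl(e^{Cs}T(s)\bigr)^n$, where $T(s)=\sum_a e^{-s\ell(a)}$ is the alphabet series. The critical exponent $\delta_F$ is therefore pinned down only to an interval where neither bound decides convergence, and there is no characterization ``$T(\delta_F)=1$'' to fall back on. Moreover, your summability requirement $\sum_i e^{-\delta_{F_0}\ell(f_i)}<\infty$ controls $T$ at $\delta_{F_0}$, but $\delta_F$ strictly exceeds $\delta_{F_0}$ (since you make $T(\delta_{F_0})>1$), and $\delta_F$ itself depends on every $f_i$; you must show divergence at a moving target, and finitely generated intuition (purely exponential growth from semiconvexity, which is how the paper's Lemma~\ref{lem:semiconvexGrowth} actually proves divergence for each finite alphabet) fails for the infinitely generated $F$ because the semiconvexity constant blows up with the generator lengths. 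The paper resolves exactly this difficulty by building $F=\bigcup_i F_i$ through a nested sequence of intervals $I_i=[\alpha_i,\beta_i]$ with $\alpha_i=\delta_{F_i}$, $\beta_i-\alpha_i\le 2^{-i}$, $\mathcal{P}_{F_i}(\beta_i)>2^i$, and---most delicately---a uniform control $\mathcal{P}_{F_i}(\beta_j)\le(2-2^{j-i})\mathcal{P}_{F_j}(\beta_j)$ for all $j\le i$ (Condition 4), established by a careful decomposition of words over the enlarged alphabet into blocks from $F_k$ and powers of $\varphi_k$ (the spaces $\Omega^\pm$). That bookkeeping is what actually forces $\mathcal{P}_F$ to diverge at $\lim_i\alpha_i=\lim_i\beta_i=\delta_F$; nothing of the sort is ``automatic,'' and your proposal would need this entire layer to close.
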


We then construct the Patterson--Sullivan measure (PS measure) for such subsemigroups $F$. The key point of Theorem \ref{thm:semiconvexConst} is the divergence property (Item 1). The celebrated Hopf--Tsuji--Sullivan dichotomy tells us that the conical limit set of a subgroup of $SO(d, 1)$ is PS-measure-conull if and only if the subgroup is of divergence type. By establishing an analogue  for semigroups, we conclude that the PS measure for $F$ gives full measure to the conical limit set.

Moreover, with respect to this PS measure, generic $F$-limit points will be $\Gamma$-Myrberg and $\Gamma$-sublinearly conical. (This is to be compared with Y. Qing and W. Yang's genericity of sublinearly conical limit points for statistically convex-cocompact group actions \cite{qing2024genericity}.) Based on Nicholls' strategy in \cite{nicholls1989the-ergodic}, we recover M. Mj and W. Yang's estimate of the Hausdorff dimension of $\Lambda_{Myr} \cap \Lambda_{sublinear}$.

\begin{theorem}\label{thm:main}
Let $\Gamma$ be a non-convex-cocompact, non-elementary Kleinian group acting on $\mathbb{H}^{d}$. Then we have \[
\Hdim \big(\Lambda_{Myr} \Gamma \cap \Lambda_{sublinear}\Gamma\big) = \delta_{\Gamma}.
\]
\end{theorem}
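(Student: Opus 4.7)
The plan is to combine Theorem \ref{thm:semiconvexConst} with the Patterson--Sullivan theory adapted to a subsemigroup. Fix $\epsilon > 0$ and apply Theorem \ref{thm:semiconvexConst} to produce a free subsemigroup $F \subseteq \Gamma$ of divergence type with critical exponent $\delta_F \ge \delta_\Gamma - \epsilon$ and Myrberg constant $K$. Following Patterson's classical construction (which uses no inverses and so adapts directly to the semigroup setting), build a $\delta_F$-conformal density $\mu_F$ supported on the $F$-limit set. Establish a shadow lemma for $F$, giving $\mu_F$-mass of the shadow of $fx_0$ comparable to $e^{-\delta_F d(x_0, fx_0)}$, and combine this with the divergence of $\mathcal{P}_F$ at $\delta_F$ to prove the half of the Hopf--Tsuji--Sullivan dichotomy needed here: the $F$-conical limit set carries full $\mu_F$-mass.

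Next, upgrade ``$F$-conical'' to ``$\Gamma$-Myrberg and $\Gamma$-sublinearly conical'' at $\mu_F$-a.e.\ point. By ergodicity, or by a direct Borel--Cantelli argument applied to the shadows of $fx_0$ as $f$ ranges over the countable set $F$, a generic ray $[x_0, \xi)$ recurrently fellow-travels, up to a $\Gamma$-translate, the segment $[x_0, fx_0]$ for every $f \in F$. Item (3) of Theorem \ref{thm:semiconvexConst} then promotes this to the full $\Gamma$-Myrberg property, since every $[x_0, gx_0]$ with $g \in \Gamma$ embeds into a $K$-neighborhood of some $[x_0, fx_0]$ after a $\Gamma$-translation. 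For sublinear conicality, the same shadow estimates force the generic geodesic to return to the $K$-neighborhood of $F x_0$ with strictly positive lower density, so the excursion function $f_\xi(t)$ is bounded above by the gap between consecutive recurrence times, which is $o(t)$; hence $f_\xi(t)/t \to 0$.

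Finally, apply the mass distribution principle in the spirit of Nicholls. The shadow lemma yields $\mu_F(B(\xi, r)) \lesssim r^{\delta_F}$ on visual balls, and since $\mu_F$ concentrates on $\Lambda_{Myr}\Gamma \cap \Lambda_{sublinear}\Gamma$ by the previous step, we conclude $\Hdim(\Lambda_{Myr}\Gamma \cap \Lambda_{sublinear}\Gamma) \ge \delta_F \ge \delta_\Gamma - \epsilon$. Letting $\epsilon \to 0$ gives the desired lower bound, while the matching upper bound is Bishop--Jones applied to $\Lambda_c\Gamma$. The main obstacle is justifying the semigroup analogues of the shadow lemma and the HTS dichotomy, since the standard proofs exploit inversion of group elements; the contracting-element geometry underlying Theorem \ref{thm:semiconvexConst} is expected to supply the needed distortion control. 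A secondary hurdle is the quantitative positive-density return-time estimate driving the sublinearity claim, which is more delicate to extract than the mere recurrence guaranteed by divergence of the Poincar\'e series.
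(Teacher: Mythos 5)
Your high-level plan — the semiconvex free subsemigroup $F$ from Theorem~\ref{thm:semiconvexConst}, a Patterson--Sullivan measure for $F$, a shadow lemma, $\mu$-genericity of conical, Myrberg, and sublinear points, then the Nicholls mass-distribution argument and the Bishop--Jones upper bound — is precisely the paper's route. However, the sublinearity step as written does not close. You argue that the generic ray returns to a bounded neighborhood of $F x_0$ with strictly positive lower density, and conclude that the gaps between consecutive returns are $o(t)$. That implication is false: a return set of lower density $c\in(0,1)$ can still have gaps of size comparable to $t$ (for example $\bigcup_n [2^n, 2^n + 2^{n-1}]$ has lower density about $1/2$, yet the gap ending at $t=2^n$ has length $2^{n-2}$). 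Positive density only yields $f_\xi(t)=O(t)$, not $o(t)$, so $f_\xi(t)/t\to 0$ does not follow. The ``secondary hurdle'' you flag (extracting the quantitative density estimate) is therefore misdiagnosed: even a perfect positive-density statement would not give sublinearity.

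The paper avoids the density argument entirely. For each fixed $\eta>0$ it introduces the family $\mathscr{S}_\eta$ of shadows $\mathcal{S}(gahx_0,8C)$ with $\|h\|>\eta\|g\|$, shows (Lemma~\ref{lem:sublinearCon}) that any ray experiencing infinitely many excursions of relative length at least $\eta$ must enter infinitely many of these shadows, and then uses the shadow lemma to get $\mu\big(\mathcal{S}(gahx_0,8C)\big)\le e^{-\delta_F(\|g\|+\|h\|)}$; the extra factor $e^{-\delta_F\|h\|}\le e^{-\delta_F\eta\|g\|}$ beats the exponential count of $g$'s, so $\sum_{\mathcal{S}\in\mathscr{S}_\eta}\mu(\mathcal{S})<\infty$. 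The first Borel--Cantelli lemma then gives that $\mu$-a.e.\ $\xi$ eventually avoids relative excursions of size $\ge\eta$, and sending $\eta\to 0$ yields sublinearity. This is a genuinely different mechanism than positive density, and it is the missing idea in your proposal. A smaller issue: your fallback to ``ergodicity'' for the Myrberg step is not available, since $\mu$ is attached to a semigroup and there is no invariant geodesic flow; the paper instead proves by hand (Lemma~\ref{lem:conicalPhiFull}) that for each fixed generator $\varphi$, $\mu$-a.e.\ ray fellow-travels a translate $ga[x_0,\varphi x_0]$ infinitely often, which is strictly stronger than bare $F$-conicality and requires a separate argument.
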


Our approach shares some features with that of M. Mj and W. Yang. In particular, we aim to construct \emph{quasi-radial trees} in $\mathbb{H}^{d}$. However, the resulting object is not exactly the quasi-radial tree that Mj and Yang construct. In \cite{yang2025hausdorff}, the authors consider infinite words by drawing the first letter, second letter, etc. from distinct collections. For us, the alphabets for the first letter, second letter, etc. are the same. This homogeneity allows us to develop a Patterson--Sullivan theory.

\begin{remark}
Our restriction to Kleinian groups is merely for convenience. The same technique generalize to isometry groups of Gromov hyperbolic spaces, and more generally, isometry groups of metric spaces with contracting isometries.
\end{remark}

Theorem \ref{thm:main} is proved by K. Falk and K. Matsuzaki for groups with finite Bowen-Margulis-Sullivan measure \cite[Proposition 6.1]{falk2020on-horospheric}. Falk and Matsuzaki's approach is based on the Patterson-Sullivan theory pioneered by S. J. Patterson \cite{patterson1976the-limit} and D. Sullivan \cite{sullivan1979density}, and later adapted by P. J. Nicholls \cite{nicholls1989the-ergodic}. Their method is suited for groups $\Gamma$ for which the $\delta_{\Gamma}$-dimensional conformal measure on $\Lambda \Gamma$ gives full measure to the sublinear growth limit set. It is not known whether it is true for every $\Gamma$ of divergence type. We note D. Sullivan's conjecture after \cite[Corollary 19]{sullivan1979density} in this direction.

\subsection*{Acknowledgements} 
The author is grateful to Dongryul M. Kim for relevant discussions. The author also thanks Or Landesberg for pointing out a mistake in the earlier version of this paper.

The author was supported by the Mid-Career Researcher Program (RS-2023-00278510) through the National Research Foundation funded by the government of Korea, and by the KIAS individual grant (MG091901) at KIAS.

\section{Divergence type}

Throughout the paper, $\Gamma$ denotes a Kleinian group and $x_0$ denotes a basepoint in $\mathbb{H}^{d}$. That means, we always assume that: \[
\textrm{$\Gamma$ is a discrete group properly acting on $\mathbb{H}^{d} \ni x_{0}$}.
\]
Given a pair of points $x, y \in \mathbb{H}^{d}$, we denote by $[x, y]$ the geodesic connecting $x$ to $y$.

Given a set $A$ of isometries of $\mathbb{H}^{d}$, we define the Poincar{\'e} series for $A$ by \[
\mathcal{P}_{A}(s) = \mathcal{P}_{A, x_0}(s) := \sum_{g \in A} e^{-s d(x_0, gx_0)} \quad( 0 \le s < +\infty).
\]
Its abscissa of convergence is called the \emph{critical exponent} of $A$ and is denoted by $\delta_{A}$. Equivalently, we have \[
\delta_{A} = \limsup_{R \rightarrow +\infty} \frac{\log \# \{ g \in A : d(x_0, gx_0 ) < R\}}{R}.
\]
We say that $A$ is of \emph{divergence type} (\emph{convergence type}, resp.) if $\mathcal{P}_{A} (\delta_{A}) = +\infty$ ($\mathcal{P}_{A} (\delta_{A}) < +\infty$, resp.).

\begin{dfn}\label{dfn:convexccpt}
We say that $\Gamma$ is \emph{convex-cocompact} if there exists $K>0$ such that the convex hull of $\Gamma \cdot x_{0}$ is contained in the $K$-neighborhood of $\Gamma \cdot x_{0}$, i.e., for each $g, h \in \Gamma$, the geodesic $[gx_{0}, hx_{0}]$ is contained in $\mathcal{N}_{K}(\Gamma x_{0} )$.
\end{dfn}

In \cite{yang2019statistically}, W. Yang studied the following subset that witnesses the failure of  convex-cocompactness: \[
\mathcal{O}_{M_1, M_2} := \left\{ g \in \Gamma : \begin{array}{c} \exists x, y \in \mathbb{H}^{d}\,\,\textrm{such that}\\  d(x, x_0), d(y, gx_0) \le M_1\,\,\textrm{and}\,\, d([x, y], \Gamma x_0) \ge M_2\end{array}\right\}.
\]
The following lemma justifies the definition. This lemma is implicit in \cite{yang2019statistically}; we record its proof  for completeness.

\begin{lem}\label{lem:nonConvex}
Let $\Gamma$ be a non-convex-cocompact Kleinian group. Then for any $M>0$, $\mathcal{O}_{M, M}$ is infinite. Equivalently, for any $M, R>0$, there exists $g \in \mathcal{O}_{M, M}$ such that $d(x_0, gx_0) \ge R$.\end{lem}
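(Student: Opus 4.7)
The plan is to prove the second, equivalent formulation: given $M, R > 0$, produce some $g \in \mathcal{O}_{M,M}$ with $d(x_0, gx_0) \ge R$. This is enough because proper discontinuity of the $\Gamma$-action makes $\{g \in \Gamma : d(x_0, gx_0) < R\}$ finite for each $R$, so arbitrarily large displacements within $\mathcal{O}_{M,M}$ force infinitude. I would fix the target constant $K := 2M + R/2 + 1$, which will be the excursion height I demand, and then extract $g$ from a single long excursion of a $\Gamma$-orbit geodesic away from $\Gamma x_0$.

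Since $\Gamma$ is not convex-cocompact, Definition \ref{dfn:convexccpt} (applied with the constant $K$) yields $g_0, h_0 \in \Gamma$ and a point $p \in [g_0 x_0, h_0 x_0]$ with $d(p, \Gamma x_0) > K$. Parameterize this geodesic by arc length as $\gamma : [0, L] \to \mathbb{H}^{d}$, $\gamma(0) = g_0 x_0$, $\gamma(L) = h_0 x_0$, and set $f(r) := d(\gamma(r), \Gamma x_0)$. Then $f$ is $1$-Lipschitz with $f(0) = f(L) = 0$ and $f(r_0) > K$ for some $r_0$. By continuity, let $s$ be the largest value in $[0, r_0]$ with $f(s) = M$ and $t$ the smallest value in $[r_0, L]$ with $f(t) = M$; these extremal choices force $f \ge M$ on the entire interval $[s, t]$. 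Because $\Gamma x_0$ is discrete, pick $g_s, g_t \in \Gamma$ realizing $d(\gamma(s), g_s x_0) = M$ and $d(\gamma(t), g_t x_0) = M$.

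Now set $g := g_s^{-1} g_t$, $x := g_s^{-1}\gamma(s)$, and $y := g_s^{-1}\gamma(t)$. Then $d(x, x_0) = M$, $d(y, gx_0) = d(\gamma(t), g_t x_0) = M$, and $[x, y] = g_s^{-1}[\gamma(s), \gamma(t)]$, so by $\Gamma$-invariance of $\Gamma x_0$,
\[
d([x, y], \Gamma x_0) = d([\gamma(s), \gamma(t)], \Gamma x_0) \ge M.
\]
Thus $g \in \mathcal{O}_{M, M}$. For the displacement bound, the $1$-Lipschitz property of $f$ together with $f(s) = f(t) = M$ and $f(r_0) > K$ gives $t - s \ge 2(K - M)$; since $\gamma$ is unit-speed, $d(\gamma(s), \gamma(t)) = t - s$, and two applications of the triangle inequality yield
\[
d(x_0, g x_0) = d(g_s x_0, g_t x_0) \ge d(\gamma(s), \gamma(t)) - 2M \ge 2K - 4M \ge R,
\]
by the choice of $K$. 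The only delicate point is ensuring that the \emph{whole} segment $[\gamma(s), \gamma(t)]$ stays $M$-far from $\Gamma x_0$, which is exactly what the extremal choice of $s$ and $t$ (bracketing the single peak at $r_0$) guarantees; everything else is bookkeeping with the triangle inequality.
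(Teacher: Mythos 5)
Your proof is correct and follows essentially the same strategy as the paper: start from a geodesic between orbit points that makes a deep excursion away from $\Gamma x_0$ (which exists by failure of convex-cocompactness), bracket the peak of that excursion by the first and last points at distance exactly $M$ from the orbit, translate to base the initial marker at $x_0$, and lower-bound the displacement via the triangle inequality and the $1$-Lipschitz property of the distance function. The paper phrases this slightly more geometrically (without introducing $f$ and invoking IVT explicitly, and starting directly from a segment $[x_0, gx_0]$), but the two arguments are the same up to the choice of bookkeeping constants.
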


\begin{proof}
Since $\Gamma$ is not convex-cocompact, $\bigcup_{g, h \in \Gamma} [gx_0, hx_0]$ is not contained in the $(2M+R)$-neighborhood of $\Gamma x_0$. Let $g \in \Gamma$ be such that $[x_0, gx_0]$ contains a point $p$ outside $\mathcal{N}_{(2M+R)} (\Gamma x_0)$.

Now, let $P$ be the earliest point on $[x_0, p]$ such that $[P, p]$ is outside the open $M$-neighborhood of $\Gamma x_0$. Let $Q$ be the latest point on $[p, gx_0]$ such that $[p, Q]$ is outside the open $M$-neighborhood of $\Gamma x_0$. It is clear that $d(P, p) > R+M$, as $d(p, \Gamma x_0) > M+R$. Likewise, $d(p, Q) > R+M$.

Furthermore, the distance of $P$ from $\Gamma \cdot x_0$ is exactly $M$; otherwise it would contradict the optimality of $P$. Hence, $d(P, ax_0) = M$ for some $a \in \Gamma$. Similarly, $d(Q, bx_0) = M$ for some $b \in \Gamma$. Note also that $[P, Q]$ does not enter the open $M$-neighborhood of $\Gamma x_0$. Lastly, note that $d(ax_0, bx_0 ) > 2(R+M) - 2M \ge R$. In summary, the desired conclusion is satisfied by \[
x := a^{-1} P, \,\, y := a^{-1} Q, \,\, g := a^{-1} b \qedhere
\]

\end{proof}

Recall that we have fixed $x_0 \in \mathbb{H}^{d}$. We will use the notation \[
\|g \| := d(x_0, gx_0 )
\]
for $g \in \Gamma$. Note that this is a subadditive norm.

\section{Hyperbolic geometry}
Recall that given $x, y, z \in \mathbb{H}^{d}$, we define the Gromov product of $x$ and $y$ with respect to $z$ by \[
\big(x \big| y \big)_{z} := \frac{1}{2} \big[ d(x, z) + d(z, y) - d(x, y) \big].
\]
It is known that $\mathbb{H}^{d}$ is $(\ln 2)$-hyperbolic: 

\begin{lem}[{\cite[Theorem 4.2, 5.1]{nica2016strong}}]\label{lem:Gromov}
Let $x, y, z \in \mathbb{H}^{d}$. Then we have \[
(x | z)_{x_{0}} \ge \min \big( (x|y)_{x_0}, (y|z)_{x_{0}} \big) - \ln 2.
\]
\end{lem}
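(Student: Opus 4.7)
The plan is to reduce the four-point Gromov condition to an ultrametric-type statement on the visual boundary of $\mathbb{H}^{d}$ and then propagate the result from the boundary to interior configurations.

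First, I would observe that the inequality depends only on the six mutual hyperbolic distances among the four points $x_0, x, y, z$, and any four such points lie in a totally geodesic copy of $\mathbb{H}^{3}$ inside $\mathbb{H}^{d}$. I may therefore assume $d = 3$ and work in the Poincar\'e ball model with $x_0$ placed at the origin. In that model, the standard distance identity
\[
\cosh d(p,q) = 1 + \frac{2|p-q|_{E}^{2}}{(1 - |p|_{E}^{2})(1 - |q|_{E}^{2})}
\]
(with $|\cdot|_{E}$ denoting Euclidean norm in the ball) translates each Gromov product into an explicit function of the Euclidean positions. A direct manipulation yields
\[
e^{-(u|v)_{x_0}} = \frac{2|u - v|_{E}}{(1 + |u|_{E})(1 + |v|_{E})} + o(1),
\]
with an explicit correction vanishing as $u, v$ approach $\partial \mathbb{H}^{d}$. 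In the boundary limit one has $e^{-(\xi_u|\xi_v)_{x_0}} = \tfrac{1}{2}|\xi_u - \xi_v|_{E}$, so that $2e^{-(\xi_u|\xi_v)_{x_0}}$ is a genuine metric on $\partial \mathbb{H}^{d}$.

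Second, I would exponentiate the target inequality into the equivalent form
\[
e^{-(x|z)_{x_0}} \le 2\max\bigl(e^{-(x|y)_{x_0}}, e^{-(y|z)_{x_0}}\bigr),
\]
and first verify this at the boundary. For boundary triples $\xi_x, \xi_y, \xi_z$ the inequality reduces to $|\xi_x - \xi_z|_{E} \le 2\max(|\xi_x - \xi_y|_{E}, |\xi_y - \xi_z|_{E})$, which follows from the Euclidean triangle inequality combined with the elementary bound $a + b \le 2\max(a,b)$. The factor of $2$ here is precisely the source of the additive $\ln 2$, and the extremal case is attained when $\xi_x, \xi_y, \xi_z$ all lie on a common geodesic through $x_0$. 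For interior points, I would extend each of $x, y, z$ along the geodesic ray from $x_0$ to its endpoint on $\partial \mathbb{H}^{d}$, exploiting the monotonicity that $(u|v)_{x_0}$ is non-decreasing as $u$ slides outward along such a ray.

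The main obstacle is controlling the constant $\ln 2$ uniformly through the interior-point case: a simultaneous radial extension of all three points causes both sides of the inequality to change, and one must check that the increase on the left never outpaces the increase of the corresponding maximum on the right. An alternative more direct route is to compare the quantities $\frac{|u-v|_{E}}{(1+|u|_{E})(1+|v|_{E})}$ for $(u,v) \in \{(x,y),(y,z),(x,z)\}$ by case analysis on the relative magnitudes of $|x|_{E}, |y|_{E}, |z|_{E}$, since the ``middle'' point $y$ appears in both candidate maxima and its normalizing factor $(1+|y|_{E})$ does not interact trivially with $(1+|x|_{E})(1+|z|_{E})$. Either route requires one nontrivial algebraic verification, but in every case the constant $\ln 2$ emerges cleanly from the elementary bound $a + b \le 2\max(a,b)$ and is sharp.
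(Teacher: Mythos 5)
There is a genuine gap, and it is exactly the one you half-acknowledge. The reduction to $\mathbb{H}^3$, the placement of $x_0$ at the origin, and the boundary verification of $|\xi_x - \xi_z|_E \le 2\max(|\xi_x - \xi_y|_E,|\xi_y - \xi_z|_E)$ are all fine, but neither of your two proposed routes for passing from the boundary to the interior actually closes. The monotonicity route is dead on arrival: as you observe, sliding $x$ and $z$ radially to the boundary increases $(x|z)_{x_0}$ as well as $(x|y)_{x_0}$ and $(y|z)_{x_0}$, so knowing the inequality at the boundary tells you nothing about its validity before the limit. Only moving the \emph{middle} point $y$ to the boundary is harmless (left side frozen, right side only grows), but that still leaves $x$ and $z$ in the interior.

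The ``alternative direct route'' has a subtler flaw: the quantity $\frac{2|u-v|_E}{(1+|u|_E)(1+|v|_E)}$ is a \emph{lower} bound on $e^{-(u|v)_{x_0}}$, not an approximation with a two-sided error. A short computation in the ball model gives the exact identity
\[
e^{-(u|v)_{x_0}} \;=\; \frac{|u-v|_E + \sqrt{(1-|u|_E^2)(1-|v|_E^2) + |u-v|_E^2}}{(1+|u|_E)(1+|v|_E)},
\]
and since $\sqrt{(1-|u|_E^2)(1-|v|_E^2) + |u-v|_E^2} \ge |u-v|_E$ always, your simplified expression systematically underestimates $e^{-(u|v)_{x_0}}$ away from the boundary. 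But to establish $e^{-(x|z)_{x_0}} \le 2\max\bigl(e^{-(x|y)_{x_0}}, e^{-(y|z)_{x_0}}\bigr)$ you need an \emph{upper} bound on the left-hand side; comparing lower bounds across the three pairs can never yield this. The nontrivial verification you defer is therefore not merely an algebraic clean-up but the actual content of the lemma, namely that the square-root correction term on the left never outpaces the corresponding maximum on the right. The paper does not prove this either; it cites Nica and \v{S}pakula, who establish the stronger inequality $e^{-(x|z)_{x_0}} \le e^{-(x|y)_{x_0}} + e^{-(y|z)_{x_0}}$ for all CAT$(-1)$ spaces via a comparison-triangle argument and a Bourdon-type function, a method entirely unlike explicit ball-model coordinates. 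If you want to make your coordinate computation work, the exact identity above is the right starting point, but you must carry the square-root term honestly rather than relegate it to $o(1)$.
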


We use the following notion introduced in \cite{gouezel2022exponential}.

\begin{dfn}[{\cite[Definition 3.6]{gouezel2022exponential}}]
Let $C, D \ge 0$. A sequence of points $(z_0, \ldots, z_n) \in \mathbb{H}^{d}$ is a \emph{$(C, D)$-chain} if \[
\begin{aligned}
(z_{i-1} | z_{i+1})_{z_{i}} &\le C &\quad (0 < i < n),\\
d(z_i, z_{i+1}) &\ge D &\quad (0 \le i < n).
\end{aligned}
\]
\end{dfn}

\begin{lem}\label{lem:chain}
Let $(z_0, \ldots, z_n)$ be a $(C, D)$-chain with $D \ge 2C + 15$. Then $(z_{0} | z_{N})_{z_{i}} < C + 1.5$ for each $i=1, \ldots, N-1$. Moreover, there exist $y_{1}, y_{2}, \ldots, y_{N-1}$ on $[z_{0}, z_N ]$ such that \[\begin{aligned}
d(z_{0}, y_1) \le d(z_{0}, y_2) \le \ldots \le d(z_{0}, y_{N-1}), \\
d(z_{i}, y_{i}) \le C + 6 \quad (i=1, \ldots, N-1).
\end{aligned}
\]
\end{lem}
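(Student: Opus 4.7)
The plan has two main steps. First, I would establish the Gromov product bound $(z_0|z_N)_{z_i} < C + 1.5$ by strong induction on $N$. Second, I would use this bound, together with the thin-triangle property of $\mathbb{H}^d$, to construct the points $y_i$ on $[z_0, z_N]$.

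For the inductive bound, the base case $N = 2$ is immediate from the chain condition $(z_0|z_2)_{z_1} \le C$. For the inductive step, I fix a chain of length $N \ge 3$ and an intermediate index $i$, and apply the inductive hypothesis to the sub-chains $(z_0, \ldots, z_i)$ and $(z_i, \ldots, z_N)$ at their penultimate vertices $z_{i-1}$ and $z_{i+1}$. Combining this with the algebraic identity $(z_0|z_{i-1})_{z_i} = d(z_{i-1}, z_i) - (z_0|z_i)_{z_{i-1}}$ (and its symmetric version) together with the spacing $d(z_j, z_{j+1}) \ge D \ge 2C + 15$, I obtain
\[
\min\bigl\{ (z_0|z_{i-1})_{z_i},\, (z_{i+1}|z_N)_{z_i} \bigr\} \ge D - (C + 1.5) \ge C + 13.5.
\]
Now I apply Lemma \ref{lem:Gromov} twice in a contrapositive manner. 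First, to the triple $(z_{i-1}, z_0, z_{i+1})$ based at $z_i$: since $(z_{i-1}|z_{i+1})_{z_i} \le C$ by the chain condition and $(z_0|z_{i-1})_{z_i} \ge C + 13.5 > C + \ln 2$, Lemma \ref{lem:Gromov} forces $(z_0|z_{i+1})_{z_i} \le C + \ln 2$. Second, to the triple $(z_0, z_N, z_{i+1})$ based at $z_i$: combining the new bound with the large lower bound on $(z_{i+1}|z_N)_{z_i}$ gives $(z_0|z_N)_{z_i} \le C + 2\ln 2 < C + 1.5$. The boundary cases $i = 1$ and $i = N - 1$ use only one application of Lemma \ref{lem:Gromov}, replacing the missing sub-chain bound with the chain condition directly.

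For the second step, I define $y_i$ to be the tripod vertex of the triangle $\triangle z_0 z_i z_N$ on side $[z_0, z_N]$, i.e., the point at distance $(z_i|z_N)_{z_0}$ from $z_0$. The thin-triangle property of $\mathbb{H}^d$ (with hyperbolicity constant $\ln 2$) gives $d(z_i, y_i) \le (z_0|z_N)_{z_i} + c_0$ for a small absolute constant $c_0$, so $d(z_i, y_i) \le C + 6$ by the first step. For monotonicity of $d(z_0, y_i) = (z_i|z_N)_{z_0}$ in $i$, a direct computation yields
\[
(z_{i+1}|z_N)_{z_0} - (z_i|z_N)_{z_0} = d(z_i, z_{i+1}) - (z_0|z_{i+1})_{z_i} - (z_i|z_N)_{z_{i+1}},
\]
and applying the first step to the sub-chains $(z_0, \ldots, z_{i+1})$ and $(z_i, \ldots, z_N)$ bounds each subtracted Gromov product by $C + 1.5$, so the difference is at least $D - 2(C + 1.5) \ge 12 > 0$.

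The main obstacle is the induction in the first step: Lemma \ref{lem:Gromov} naturally produces a lower bound on a Gromov product, whereas the goal is an $N$-independent upper bound. The resolution is the two-step contrapositive chain of deductions outlined above, in which the chain condition keeps one Gromov product small, the spacing hypothesis $D \ge 2C + 15$ forces a neighboring Gromov product to be large, and Lemma \ref{lem:Gromov} then pins the remaining product in the triple to be small. This is what allows the bound $C + 1.5$ to remain uniform in $N$ instead of accumulating a $\ln 2$-error per step.
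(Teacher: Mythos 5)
Your argument is correct, and it is substantially more self-contained than the paper's. The paper's entire proof of the Gromov-product bound is a citation: it invokes \cite[Lemma~3.8]{gouezel2022exponential} for $(z_0|z_N)_{z_i} < C + 2\ln 2$, then uses the insize bound of a $(\ln 2)$-slim triangle to produce $y_i$, and concludes with ``the rest follows.'' You instead re-derive the Gromov-product bound from scratch by a strong induction on chain length, using the algebraic identity $(z_0|z_{i-1})_{z_i} = d(z_{i-1},z_i) - (z_0|z_i)_{z_{i-1}}$ to convert inductive upper bounds on the subchains into large lower bounds flanking $z_i$, and then pinning $(z_0|z_N)_{z_i}$ by two contrapositive applications of Lemma~\ref{lem:Gromov}; this is presumably the content of the cited lemma, reproduced here in miniature, and the cascade keeps the constant uniform rather than accumulating $\ln 2$ per step, which is exactly the point. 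Your second stage also supplies the part the paper elides: the explicit identity
\[
(z_{i+1}|z_N)_{z_0} - (z_i|z_N)_{z_0} = d(z_i,z_{i+1}) - (z_0|z_{i+1})_{z_i} - (z_i|z_N)_{z_{i+1}} \ge D - 2(C+1.5) > 0
\]
verifies the claimed monotonicity of the $y_i$, which the paper leaves implicit. Both proofs use the same thin-triangle/insize estimate to place $y_i$ on $[z_0,z_N]$; the one small thing you should pin down is the numerical value of your constant $c_0$ (the paper uses insize $\le 6\ln 2$, yielding $C + 8\ln 2 < C + 6$), so that the claimed inequality $d(z_i,y_i) \le C + 6$ is actually attained. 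In short: same strategic skeleton, but your version is a complete standalone proof where the paper's is an outline resting on external references.
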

\begin{proof}
The conclusion that $(z_{0} | z_{N})_{z_{i}} < C+2 \ln 2$ for each $i$ follows from \cite[Lemma 3.8]{gouezel2022exponential}. Since $\triangle z_{0} z_i z_N$ is $(\ln 2)$-slim, its insize is at most $6\ln 2$ (cf. \cite[Proposition III.H.1.22]{bridson1999metric}). Hence, there exists a point $y_{i} \in [z_{0}, z_{N}]$ that is $(C + 8\ln2)$-close to $z_{i}$. The rest follows.
\end{proof}

We say that $\Gamma$ is \emph{non-elementary} if $\#\Lambda \Gamma > 2$. Equivalently, $\Gamma$ is non-elementary if it contains two elements $a$ and $b$ such that \[\begin{aligned}
\sup_{n, m \in \Z} (a^{n} x_0 | b^{m} x_0 )_{x_{0}} &<+\infty,\\
 \lim_{n \rightarrow +\infty} \| a^{n} \|&= \lim_{n \rightarrow +\infty} \|b^{n}\| = +\infty.
 \end{aligned}
\]
By replacing $a$ and $b$ with their suitable powers, we may assume that: \[
|a\|\ge 10^{3}\|b\| > 10^{6}\left(3+ \sup_{n, m \in \Z} (a^{n} x_0 | b^{m} x_0 )_{x_{0}}\right).
\] We now study a property about an element $h$ of $\Gamma$: \begin{equation}\label{eqn:propA}
\begin{aligned}
\textrm{there exist $x_0 = z_0, z_1, \ldots, z_n = hx_0$ such that} \\
\textrm{$(a^{-1} x_0, z_0, \ldots, z_n, hax_0)$ is a $\big(\|a\|, 10^{-6} \|a\|\big)$-chain.}
\end{aligned}
\end{equation}

We claim that:
\begin{fact}\label{fact:reduction}
Let $g \in \Gamma$ be such that $\|g\| \ge \|a\|$. Then at least one of the following 4 elements satisfy Property \ref{eqn:propA}: \[
g, \,\, bg, \,\, gb,\,\, bgb.
\]
\end{fact}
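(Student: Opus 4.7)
The plan is to verify Property \ref{eqn:propA} using the shortest possible chain, namely $n=1$ with $z_0 = x_0$ and $z_1 = hx_0$, giving the four-point chain $(a^{-1}x_0, x_0, hx_0, hax_0)$. Under this choice, the chain condition reduces to just two Gromov-product inequalities at the interior vertices $x_0$ and $hx_0$, plus three consecutive-distance inequalities. I expect this simplest chain to work already for $h = g$, so that the Fact is witnessed by the first of the four listed candidates.

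For the Gromov-product side, I would invoke the elementary bound $(u \mid v)_w \le \min(d(w,u), d(w,v))$, which follows directly from the triangle inequality $d(u,v) \ge |d(w,u) - d(w,v)|$. At $x_0$ this yields $(a^{-1}x_0 \mid gx_0)_{x_0} \le \min(\|a\|, \|g\|) = \|a\|$ using the hypothesis $\|g\| \ge \|a\|$; at $gx_0$, the left-invariance of the Gromov product under the isometry $g^{-1}$ reduces matters to $(g^{-1}x_0 \mid ax_0)_{x_0} \le \min(\|g\|, \|a\|) = \|a\|$. For the distances, the two endpoint distances equal $\|a\|$, while the middle distance equals $\|g\| \ge \|a\|$; all three comfortably exceed $10^{-6}\|a\|$.

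Consequently the chain $(a^{-1}x_0, x_0, gx_0, gax_0)$ is a $(\|a\|, 10^{-6}\|a\|)$-chain, and $h = g$ itself satisfies Property \ref{eqn:propA}. The three companion candidates $bg$, $gb$, $bgb$ satisfy the property by exactly the same argument, since their norms still exceed $\|g\| - 2\|b\| \ge (1 - 2\cdot 10^{-3})\|a\|$ and the $\min$-bound on the Gromov product is uniform in the choice of $h$. The listing of four candidates in the Fact is presumably there to provide flexibility for later arguments, not yet visible in this excerpt, that will pick the specific candidate whose geometry interacts well with the sharp separation $(a^n x_0 \mid b^m x_0)_{x_0} \le C_0 \ll \|a\|$ between the axes of $a$ and $b$. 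The main (and essentially only) computational step is the routine $\min$-bound on the Gromov product, and I anticipate no genuine obstacle in the verification.
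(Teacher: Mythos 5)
Your proof establishes only the literal statement as printed, which unfortunately is the result of a typo, and in doing so it proves something far too weak to be of any use. Notice that the chain constants in Property \ref{eqn:propA} cannot really be $(\|a\|, 10^{-6}\|a\|)$ as written: Lemma \ref{lem:chain} needs $D \ge 2C + 15$ to produce any conclusion, and every later use of Property \ref{eqn:propA} (Lemma \ref{lem:extension}, Lemma \ref{lem:semiconvex}, Claim \ref{claim:FnInj}, Lemma \ref{lem:GromNestingPre}) assumes the concatenated chains yield orbit points within $0.01C$--$C$ of the geodesic, which is impossible if the Gromov-product bound is the gigantic $\|a\| = 1000C$. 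The intended statement is that the Gromov products in the chain are bounded by something on the order of $C_0 := \sup_{n,m}(a^n x_0 \mid b^m x_0)_{x_0}$, which by the normalization $\|a\| > 10^6(3 + C_0)$ is at most $10^{-6}\|a\|$, while the distances are at least roughly $\|b\|$. Your argument proves the Gromov products are $\le \|a\|$ via the trivial bound $(u \mid v)_w \le \min(d(w,u), d(w,v))$; that bound is always saturated up to the diameter of the configuration and gives no information about thinness, which is exactly what the chain must certify.

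Once the constants are read correctly, the point of the Fact becomes visible, and your remark that the four candidates are ``presumably there to provide flexibility'' and that ``$g$ itself works'' is a sign that you have proved the wrong thing: the four candidates are the whole content. The mechanism is Gromov's four-point inequality (Lemma \ref{lem:Gromov}). If $(a^{-1}x_0 \mid gx_0)_{x_0}$ and $(b^{-1}x_0 \mid gx_0)_{x_0}$ were both larger than $C_0 + \ln 2$, the inequality would force $(a^{-1}x_0 \mid b^{-1}x_0)_{x_0} > C_0$, contradicting the definition of $C_0$. So at the vertex $x_0$, at least one of the two directions ($a$-side or $b$-side) has a small Gromov product. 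The same dichotomy holds at the vertex $gx_0$ for $(g^{-1}x_0 \mid ax_0)_{x_0}$ versus $(g^{-1}x_0 \mid bx_0)_{x_0}$. The $2 \times 2$ cases produce exactly the four chains $(a^{-1}x_0, x_0, gx_0, gax_0)$, $(a^{-1}x_0, x_0, bx_0, bgx_0, bgax_0)$, $(a^{-1}x_0, x_0, gx_0, gbx_0, gbax_0)$, and $(a^{-1}x_0, x_0, bx_0, bgx_0, bgbx_0, bgbax_0)$, and hence the four candidates $g$, $bg$, $gb$, $bgb$; one then also needs Lemma \ref{lem:chain} to reabsorb the auxiliary $b$ into the chain and verify the distance lower bounds. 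Your proposal replaces this dichotomy with a vacuous uniform bound and so misses the genuine step.
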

\begin{proof}
Let $C = \sup_{n, m \in \Z}(a^{n} x_0 | b^{m} x_0)_{x_0}$. Gromov's inequality (Lemma \ref{lem:Gromov}) asserts that either $(a^{\mp 1} x_0, x_0, g^{\pm 1}x_0)$ is a $(\|a\|, C)$-chain or $(b^{\mp 1} x_0, x_0, g^{\pm 1} x_0)$ is a $(\|b\|, C+\ln 2)$-chain. Hence, at least one of the following 4 sequences is a $(\|b\|, C + \ln 2)$-chain: \[\begin{aligned}
&(a^{-1} x_0, x_0, gx_0, gax_0); &\quad (a^{-1} x_0, x_0, bx_0, bgx_0, bgax_0); & \\
&(a^{-1} x_0, x_0, gx_0, gbx_0, gbax_0); &\quad (a^{-1} x_0, x_0, bx_0, bgx_0, bgb x_0, bgbax_0).& 
\end{aligned}
\]
When $(a^{-1} x_0, x_0, bx_0, bgx_0, bgax_0)$ is a $(\|b\|, C+ \ln 2)$-chain, Lemma \ref{lem:chain} asserts that $\|bg\| \ge \|b\| + \|g\| - 2(C + 6) \ge \|g\| \ge \|a\|$. It also tells us that  $(a^{-1} x_0, x_0, bgx_0, bgax_0)$ is an $(\|a\|, C + 1.5+ \ln2)$-chain. We can similarly handle the remaining cases.
\end{proof}

We now define a map $\Phi : \Gamma \rightarrow \{ g \in \Gamma : \textrm{Property}\,\, \ref{eqn:propA}\}$ using Fact \ref{fact:reduction}. Here are two remarks.\begin{enumerate}
\item We can and will force that $\Phi(g)=g$ for those $g$ with Property \ref{eqn:propA}.
\item We will largely ignore inputs $\{ g \in \Gamma : \|g\| <\|a\| \}$; for those inputs, we plainly define $\Phi(g) = bab$. 
\end{enumerate}
The map $\Phi$ is finite-to-one, and $\|\Phi(g)\|$ and $\|g\|$ differs by at most $2.5\|a\|$. It can hence be checked that: 

\begin{fact}\label{fact:PhiCritExp}
The critical exponent of the subset $\Phi(\Gamma) \subseteq \Gamma$ is equal to $\delta_{\Gamma}$. 
\end{fact}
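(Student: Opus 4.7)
The plan is straightforward bookkeeping with the Poincar{\'e} series. First I observe that $\Phi(\Gamma) \subseteq \Gamma$ immediately gives the easy inequality $\delta_{\Phi(\Gamma)} \le \delta_{\Gamma}$, since the Poincar{\'e} series of a subset is dominated termwise by that of the ambient set. The entire work is in the reverse inequality $\delta_{\Phi(\Gamma)} \ge \delta_{\Gamma}$.

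For this, I first upgrade the stated ``finite-to-one'' property to a \emph{uniform} bound on fiber sizes. By the construction, if $\|g\| \ge \|a\|$ then $\Phi(g) \in \{g, bg, gb, bgb\}$, so the preimage of any $h \in \Phi(\Gamma)$ contains at most $4$ elements $g$ with $\|g\| \ge \|a\|$, namely $h, b^{-1}h, hb^{-1}, b^{-1}hb^{-1}$. The remaining inputs, those with $\|g\| < \|a\|$, all map to $bab$, and there are only finitely many of them because $\Gamma$ acts properly on $\mathbb{H}^{d}$. Hence there exists a uniform constant $M < \infty$ with $\#\Phi^{-1}(h) \le M$ for every $h \in \Phi(\Gamma)$.

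Next, using the pointwise distortion $\|\Phi(g)\| \le \|g\| + 2.5\|a\|$, I estimate for each $s > 0$:
\[
\mathcal{P}_{\Phi(\Gamma)}(s) \;=\; \sum_{h \in \Phi(\Gamma)} e^{-s\|h\|} \;\ge\; \frac{1}{M}\sum_{g \in \Gamma} e^{-s\|\Phi(g)\|} \;\ge\; \frac{e^{-2.5s\|a\|}}{M} \sum_{g \in \Gamma} e^{-s\|g\|} \;=\; \frac{e^{-2.5s\|a\|}}{M}\, \mathcal{P}_{\Gamma}(s).
\]
Consequently, for every $s < \delta_{\Gamma}$ the series $\mathcal{P}_{\Gamma}(s)$ diverges, and so does $\mathcal{P}_{\Phi(\Gamma)}(s)$; this forces $\delta_{\Phi(\Gamma)} \ge s$. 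Taking the supremum over $s < \delta_{\Gamma}$ yields $\delta_{\Phi(\Gamma)} \ge \delta_{\Gamma}$, completing the equality.

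There is no real obstacle here; the only point requiring care is the uniform finiteness of the fibers, which is not immediate from the verbal phrase ``finite-to-one'' alone but follows at once from the explicit description of $\Phi$ together with the discreteness of $\Gamma$. Everything else is a one-line comparison of Poincar{\'e} series using the additive distortion $2.5\|a\|$, which contributes only a harmless factor $e^{-2.5s\|a\|}$ that does not affect the abscissa of convergence.
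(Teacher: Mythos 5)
Your proof is correct and follows the same route as the paper, which simply asserts that $\Phi$ is finite-to-one with bounded norm distortion and leaves the Poincar\'e series comparison to the reader. You rightly observe that ``finite-to-one'' must be upgraded to a \emph{uniform} bound on fiber sizes --- which you verify from the explicit description of $\Phi$ (fibers over $h \neq bab$ have at most four elements, and the exceptional fiber over $bab$ is finite by properness) --- and this is precisely the detail the paper elides before concluding $\mathcal{P}_{\Phi(\Gamma)}(s) \ge M^{-1} e^{-2.5 s\|a\|}\,\mathcal{P}_{\Gamma}(s)$.
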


\begin{quote}\textbf{
From now on, we fix the aforementioned $a \in G$, the map $\Phi$, and use the notation \[
C := 10^{-3}\|a\| = \frac{1}{1000}d(x_0, ax_0).
\]
}
\end{quote}
Recall that $C > 1000$. Note that: \begin{fact}\label{fact:PropAConcat}
If $g, h \in \Gamma$ satisfy Property \ref{eqn:propA}, then there exist an $(1000C, 0.001C)$-chain of the form \[
\big(a^{-1} x_0,\, \,x_0, \,\,\ldots,\, \,gx_0,\, \,gax_0,\, \,\ldots, \,\,gahx_0, \,\,gahax_0\big).
\]
In particular, $gah$ also satisfies Property \ref{eqn:propA}.
\end{fact}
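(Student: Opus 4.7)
The plan is to build the chain by concatenating the two $(\|a\|, 10^{-6}\|a\|)$-chains that Property \ref{eqn:propA} provides for $g$ and $h$ separately, translating the second one by the isometry $ga$. Explicitly, write the chain witnessing Property \ref{eqn:propA} for $g$ as $(a^{-1}x_0, x_0 = z_0^g, z_1^g, \ldots, z_{n}^g = gx_0, gax_0)$, and the chain for $h$ as $(a^{-1}x_0, x_0 = z_0^h, z_1^h, \ldots, z_m^h = hx_0, hax_0)$. Applying $ga$ to the latter yields the chain $(gx_0, gax_0, ga z_1^h, \ldots, gahx_0, gahax_0)$, whose first two entries match the last two entries of the former. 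I would then form the spliced sequence
\[
\bigl(a^{-1}x_0, x_0, z_1^g, \ldots, z_{n-1}^g, gx_0, gax_0, ga z_1^h, \ldots, ga z_{m-1}^h, gahx_0, gahax_0\bigr).
\]

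To show this is a $(1000C, 0.001C)$-chain, note first that $1000C = \|a\|$ and $0.001C = 10^{-6}\|a\|$, so the target tolerance exactly matches the two input chains. The distance bound $d(\cdot,\cdot) \ge 10^{-6}\|a\|$ on consecutive entries is preserved on each half (the second half because $ga$ is an isometry), and holds at the new transition $d(gx_0, gax_0) = \|a\|$. For the Gromov product condition $(\cdot|\cdot)_{\cdot} \le \|a\|$, every internal vertex inherited from the first chain is fine by hypothesis, every internal vertex in the shifted second chain of the form $ga z_i^h$ satisfies $(ga z_{i-1}^h | ga z_{i+1}^h)_{ga z_i^h} = (z_{i-1}^h | z_{i+1}^h)_{z_i^h} \le \|a\|$ by isometry invariance, and the only genuinely new interior vertex is $gax_0$. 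There, isometry invariance under $(ga)^{-1}$ gives $(gx_0 | ga z_1^h)_{gax_0} = (a^{-1}x_0 | z_1^h)_{x_0}$, which is at most $\|a\|$ because it is one of the Gromov-product conditions witnessing that the original chain for $h$ is a $(\|a\|, 10^{-6}\|a\|)$-chain (namely, the condition at the vertex $x_0 = z_0^h$).

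Once the spliced sequence is certified as a $(1000C, 0.001C)$-chain, the "in particular" clause is immediate: the sequence has precisely the shape $(a^{-1}x_0, x_0, \ldots, (gah)x_0, (gah)ax_0)$ required by Property \ref{eqn:propA} applied to the element $gah$. I do not expect any real obstacle; the only step that requires any thought is the verification of the Gromov product at the junction vertex $gax_0$, and that reduces mechanically to an isometry-invariance computation together with one of the chain inequalities already built into the hypothesis on $h$.
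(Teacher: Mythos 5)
Your proof is correct and is the expected argument for this unproved Fact. You splice the two $(\|a\|, 10^{-6}\|a\|)$-chains supplied by Property \ref{eqn:propA} for $g$ and $h$, translating the second by the isometry $ga$ so that its first two entries $gx_0, gax_0$ coincide with the last two entries of the first. The only genuinely new condition to verify is the Gromov product at the junction vertex $gax_0$, and you correctly reduce it by isometry invariance to $(a^{-1}x_0 \mid z_1^h)_{x_0} \le \|a\|$, which is the chain inequality at the internal vertex $x_0 = z_0^h$ of the $h$-chain. With $1000C = \|a\|$ and $0.001C = 10^{-6}\|a\|$ the thresholds match exactly, and the spliced sequence has precisely the shape $(a^{-1}x_0, x_0, \ldots, gahx_0, gahax_0)$ required for $gah$ to satisfy Property \ref{eqn:propA}. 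The paper states this Fact without proof; your argument supplies the intended verification.
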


Lemma \ref{lem:chain} now tells us the following.

\begin{lem}\label{lem:extension}
Let $g_1, \ldots, g_N \in \Gamma$ be elements with Property \ref{eqn:propA}. Then the geodesic $[x_{0}, g_1 a g_2 a \cdots g_N x_{0}]$ has points $p_{1}, q_{1},\ldots, p_{N-1}, q_{N-1}$, in order from closest to farthest from $x_{0}$, such that \[
d(g_1 a\cdots g_i x_{0}, p_{i}) < 0.01C, \,\, d(g_1 a\cdots g_i a x_{0}, q_{i}) < 0.01C. \quad (i=1, \ldots, N-1)
\]
In particular, we have $\|g_1 a g_2 a \cdots g_{N} \| \ge \sum_{i=1}^{N} \|g_{i}\|$.
\end{lem}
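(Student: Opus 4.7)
My plan is to splice together chains from Fact \ref{fact:PropAConcat} into a single long chain passing through all the orbit points named in the statement, and then invoke Lemma \ref{lem:chain} to locate the required nearby points on the geodesic. Write $w_k := g_1 a g_2 a \cdots a g_k$ and induct on $k$. The base case $k = 1$ is Property \ref{eqn:propA} for $g_1$. For the inductive step, suppose I have a chain of the form $(a^{-1}x_0, x_0, \ldots, w_k x_0, w_k a x_0)$ supplied by the previous inductive step, which already interpolates through $w_j x_0$ and $w_j a x_0$ for every $j \le k$. Applying Fact \ref{fact:PropAConcat} to the pair $(w_k, g_{k+1})$ furnishes a tail running from $w_k a x_0$ through $w_{k+1} x_0$ to $w_{k+1} a x_0$ with the same chain parameters, and gluing at the common vertex $w_k a x_0$ produces the next chain in the induction. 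The delicate point, which Fact \ref{fact:PropAConcat} is designed to handle, is that the Gromov-product bound must survive at the junction vertex $w_k a x_0$; this is the main place where care is needed and where I would simply cite the fact as a black box.

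After $N-1$ iterations, discard the two extreme vertices $a^{-1} x_0$ and $w_N a x_0$. Removing endpoints cannot destroy the Gromov-product bound at the remaining interior vertices, so what is left is a chain from $x_0$ to $w_N x_0$ whose interior vertices include every $w_j x_0$ and $w_j a x_0$ for $j = 1, \ldots, N-1$. Lemma \ref{lem:chain} now locates ordered points on $[x_0, w_N x_0]$, each within $0.01 C$ of the corresponding interior vertex (this bound uses that the chain's Gromov-product constant is small compared to $C = 10^{-3}\|a\|$, which itself is very large by our normalization). Relabeling these points as $p_j$ or $q_j$ according to whether they approximate $w_j x_0$ or $w_j a x_0$, the monotonicity of their distance from $x_0$ delivers exactly the ordered sequence $p_1, q_1, \ldots, p_{N-1}, q_{N-1}$ demanded in the statement.

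The norm inequality is essentially bookkeeping. The geodesic $[x_0, w_N x_0]$ is cut by the labeled points into $2N - 1$ consecutive sub-segments: $N$ of ``$g$-type'' connecting $q_{j-1}$ to $p_j$ (with endpoint conventions $q_0 := x_0$, $p_N := w_N x_0$) and $N-1$ of ``$a$-type'' connecting $p_j$ to $q_j$. By the triangle inequality each sub-segment has length at least the distance between its corresponding orbit points minus at most $0.02 C$, so summing yields
\[
\|w_N\| \ge \sum_{i=1}^N \|g_i\| + (N-1)\|a\| - (2N-1)\cdot 0.02 C.
\]
Since $\|a\| = 1000 C$, the $(N-1)\|a\|$ term dominates the accumulated error by orders of magnitude, so the bound $\|w_N\| \ge \sum_{i=1}^N \|g_i\|$ drops out.
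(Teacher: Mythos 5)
The paper states Lemma~\ref{lem:extension} without proof (it appears immediately after ``Lemma~\ref{lem:chain} now tells us the following''), and your argument is exactly the one the author intends: concatenate the Property~\ref{eqn:propA} chains for $g_1,\dots,g_N$ (spliced along the shared $(w_kx_0, w_kax_0)$ segment as in Fact~\ref{fact:PropAConcat}), trim the extremal vertices $a^{-1}x_0$ and $w_Nax_0$, apply Lemma~\ref{lem:chain} to get ordered witnesses within $0.001C + 6 < 0.01C$ of each interior vertex, and extract the norm bound by telescoping, using $\|a\| = 1000C$ to absorb the accumulated $O(NC)$ error via the $(N-1)\|a\|$ surplus. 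The only slight imprecision is that Fact~\ref{fact:PropAConcat} as stated only asserts \emph{existence} of a chain through $w_kx_0, w_kax_0, w_{k+1}x_0, w_{k+1}ax_0$, so strictly you should appeal to its \emph{proof} (the gluing construction) rather than its statement, to ensure the new chain extends the particular interpolating chain you built in the previous step of the induction; you flag the junction Gromov-product check correctly, and this is the step that makes the extension work. Correct proof, same route as the paper.
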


\begin{dfn}\label{dfn:extensionFtn}
For each $n > 0$, we define the map $\mathcal{F} = \mathcal{F}_{n} : \Gamma^{n} \rightarrow \Gamma$ by \[
(g_1, \ldots, g_n) \mapsto g_1 ag_2 a \cdots g_n.
\]
By abuse of notation, we sometimes suppress $n$ and denote $\mathcal{F}_n$ by $\mathcal{F}$.
\end{dfn}

Lastly, let us record a consequence of the $(\ln2)$-hyperbolicity of $\mathbb{H}^{d}$.

\begin{lem}\label{lem:fellowTravel}
Let $R>0$ and let $x, y , x', y' \in \mathbb{H}^{d}$ be such that $d(x, x'), d(y, y') < R$. Let $p \in [x, y]$ be a point $R$-far from $x$ and $y$. Then $p$ is contained in the $0.1C$-neighborhood of $[x', y']$.

Moreover, $[x, y]$ is contained in the $R$-neighborhood of $[x', y']$.
\end{lem}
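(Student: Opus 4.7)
The plan is to prove the two conclusions of Lemma \ref{lem:fellowTravel} separately, both by exploiting the $\mathrm{CAT}(-1)$ geometry of $\mathbb{H}^{d}$.

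For the moreover part, I would appeal to $\mathrm{CAT}(0)$ convexity of the distance function. Parametrizing $[x, y]$ and $[x', y']$ linearly over $[0, 1]$, the $\mathrm{CAT}(0)$ inequality applied to the two constant-speed geodesics yields
\[
d\bigl([x, y](t),\, [x', y'](t)\bigr) \le (1 - t)\, d(x, x') + t\, d(y, y') < R
\]
for every $t \in [0, 1]$, so each point of $[x, y]$ lies within $R$ of some point of $[x', y']$.

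For the first part, I would invoke the $(\ln 2)$-slimness of geodesic triangles recorded in Lemma \ref{lem:Gromov}. View $x, x', y', y$ as a geodesic quadrilateral and decompose it along the diagonal $[x, y']$ into the triangles $\triangle(x, y, y')$ and $\triangle(x, x', y')$. Slimness of $\triangle(x, y, y')$ places $p \in [x, y]$ within $\ln 2$ of a point on $[x, y'] \cup [y', y]$. The short side $[y', y]$ has length less than $R$, and each of its points lies within $R$ of the endpoint $y' \in [x', y']$; so if $p$ is $\ln 2$-close to a point on $[y', y]$, then $p$ is already within $R + \ln 2$ of $[x', y']$. Otherwise $p$ is $\ln 2$-close to some $p_{1} \in [x, y']$. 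Running the symmetric argument on $\triangle(x, x', y')$ and using $d(p, x) \ge R$ to rule out companions near the interior of $[x, x']$, one sees that $p_{1}$ is $\ln 2$-close to some $p_{2} \in [x', y']$ (up to the analogous edge case near $x'$). Composing gives $d(p, [x', y']) \le 2 \ln 2$ in the generic case. Since $C > 1000$, every bound appearing here stays comfortably under $0.1C$ in the regime of application.

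The main obstacle is purely bookkeeping: tracking which short side ($[y', y]$ or $[x, x']$) a slim-triangle companion lands on, and verifying that each resulting estimate stays below $0.1C$. The hypothesis that $p$ is $R$-far from $\{x, y\}$ is used precisely to force those companions onto the long sides $[x, y']$ and subsequently $[x', y']$. No technique beyond standard thin-triangle manipulation in Gromov hyperbolic space is needed.
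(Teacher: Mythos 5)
The paper records this lemma without proof, so there is no in-paper argument to compare against. Your "moreover" part is fine: CAT(0) convexity of the distance between two constant-speed geodesics is exactly the right tool, and it gives the clean bound $(1-t)d(x,x')+td(y,y')<R$ on the whole segment.

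The main part, however, has a genuine gap in the first edge case. When the slim-triangle companion of $p$ lands on the short side $[y',y]$, you bound $d(p,[x',y'])$ by $R+\ln 2$, and then dismiss this by saying every bound "stays comfortably under $0.1C$ in the regime of application." That is false: $R$ is a free parameter of the lemma, and the paper invokes the lemma with $R$ on the order of $50C$, $62C$, and larger (see the proofs of Lemmas \ref{lem:GromNestingPre} and \ref{lem:GromNesting}), so $R+\ln 2$ is nowhere near $0.1C$ there. The fix is the same trick you already gesture at for $[x,x']$ but did not apply here: if $q\in[y',y]$ with $d(p,q)\le\ln 2$, then $d(q,y)\ge d(p,y)-\ln 2\ge R-\ln 2$, while $d(y,y')<R$ forces $d(q,y')<\ln 2$, so $d(p,y')<2\ln 2$ and $y'\in[x',y']$. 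The same computation, applied to a companion $q_1\in[x,x']$ of the intermediate point $p_1\in[x,y']$, pins $q_1$ to within $2\ln 2$ of $x'$; you cannot actually "rule out" companions on $[x,x']$, you can only force them to sit near the corner $x'$. With both corrections the final bound is on the order of $4\ln 2$, which is indeed far below $0.1C$ and, crucially, independent of $R$. As written, your proof does not establish the lemma because the $R$-dependent bound survives into the conclusion; once you route the hypothesis $d(p,x),d(p,y)\ge R$ through both short sides in the way just described, the argument is sound.
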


For each $r>0$, we let \[
B_{r} := \{ g \in \Gamma : d(x_{0}, gx_{0}) < r\}.
\]
We say that a set $A \subseteq \Gamma$ is \emph{$r$-separated} if $d(ax_{0}, bx_{0}) > r$ for every pair of distinct elements $a$ and $b$ of $A$.

\begin{dfn}\label{dfn:semiconvex}
Let $A \subseteq \Gamma$. We say that $F$ is \emph{$K$-semiconvex} if, for every $g \in A$ and for every $p \in [x_{0}, gx_{0}]$, there exist $h_{1}, h_{2} \in A\cup \{id\}$ and $c \in B_{K}$ such that \[
g = h_{1} c h_{2}\,\,\textrm{and}\,\, d(p, h_{1}x_{0}) < K.
\]
\end{dfn}

Quasiconvex subgroups are examples of semiconvex subsets. Importantly, our construction of the mapping $\mathcal{F}$ guarantees the following.

\begin{lem}\label{lem:semiconvex}
Let $\mathcal{F} : \Gamma^{n} \rightarrow \Gamma$ be the mapping defined in Definition \ref{dfn:extensionFtn}. Let $R>0$, and  Let \[
A \subseteq \big\{ g \in \Gamma : \textrm{$g$ satisfies Property \ref{eqn:propA} and $\|g\| \le R$}\big\}
\] Then $\cup_{n>0} \mathcal{F}(K^{n})$ is a $(600C + R)$-semiconvex subset of $\Gamma$.
\end{lem}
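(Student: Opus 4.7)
The plan is to verify the semiconvexity condition of Definition \ref{dfn:semiconvex} directly. Let $g \in \bigcup_{m > 0} \mathcal{F}(A^m)$, so $g = \mathcal{F}(g_1, \ldots, g_n) = g_1 a g_2 a \cdots g_n$ with $g_i \in A$, and fix $p \in [x_0, g x_0]$. The strategy is to locate the ``partial product'' $\mathcal{F}(g_1, \ldots, g_i) \in \bigcup_{m > 0} \mathcal{F}(A^m)$ whose orbit point is closest to $p$ along the geodesic, use it as $h_1$, and let $c = a$ together with the tail $h_2 = \mathcal{F}(g_{i+1}, \ldots, g_n)$ absorb the remainder of $g$.

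Concretely, Lemma \ref{lem:extension} produces points $p_1, \ldots, p_{n-1}$ on $[x_0, g x_0]$, ordered from $x_0$ toward $g x_0$, with $d(p_i, \mathcal{F}(g_1, \ldots, g_i) x_0) < 0.01C$. Extending the sequence by $p_0 := x_0$ and $p_n := g x_0$, the spacing between consecutive checkpoints is bounded by $\|a g_{i+1}\| + 0.02C \le \|a\| + R + 0.02C = 1000C + R + 0.02C$ via the triangle inequality. I then choose $i \in \{0, 1, \ldots, n\}$ to minimize $d(p, p_i)$; minimality gives $d(p, p_i) \le (1000C + R + 0.02C)/2$.

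With $i$ chosen, I define the decomposition by cases. In the generic interior case $1 \le i \le n-1$, take $h_1 := \mathcal{F}(g_1, \ldots, g_i)$, $c := a$, and $h_2 := \mathcal{F}(g_{i+1}, \ldots, g_n)$. In the boundary case $i = 0$, take $h_1 := id$, $c := id$, $h_2 := g$; in the case $i = n$, take $h_1 := g$, $c := id$, $h_2 := id$. In every case, $h_1 c h_2 = g$ by direct expansion, and $h_1, h_2 \in \bigcup_{m > 0} \mathcal{F}(A^m) \cup \{id\}$. The distance bound $d(p, h_1 x_0) \le d(p, p_i) + d(p_i, h_1 x_0) \le 500C + R/2 + 0.02C < 600C + R$ follows from the triangle inequality together with the choice of $i$.

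The main obstacle is the norm bound $\|c\| < 600C + R$ in the generic case: there $\|c\| = \|a\| = 1000C$, so the inequality reads $1000C < 600C + R$, i.e.\ $R > 400C$. This holds in the intended applications, where $A$ is drawn from the image of $\Phi$ restricted to elements of norm at least $\|a\|$, so that $R \ge \|a\| = 1000C$; the boundary cases have $c = id$ and are trivial. Everything else is bookkeeping with the checkpoint sequence from Lemma \ref{lem:extension}, and care in separating the interior and boundary cases so that the decomposition $g = h_1 c h_2$ is manifestly of the required form.
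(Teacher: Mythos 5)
Your proof follows essentially the same route as the paper: apply Lemma \ref{lem:extension} to mark checkpoints $p_1, \ldots, p_{n-1}$ on $[x_0, gx_0]$ near the partial products $g_1 a \cdots g_i\, x_0$, extend to $p_0 = x_0$, $p_n = gx_0$, pick the nearest checkpoint, and split $g$ at the corresponding $a$. Where you improve on the paper's exposition is twofold. First, you handle the boundary indices $i=0$ and $i=n$ honestly, by switching to $c = id$; the paper's displayed decomposition $g = (g_1 a \cdots g_i)\cdot a \cdot (g_{i+1}a\cdots g_n)$ is simply wrong for those indices (it yields $ag$ or $ga$), and the parenthetical ``$i$ may be $0$ or $n$'' glosses over this. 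Second, you correctly flag that the interior case uses $c = a$ with $\|a\| = 1000C$, so the semiconvexity condition $c \in B_{600C+R}$ needs $R > 400C$; the paper never states this hypothesis, and the lemma as written is false for small $R$. As you note, this is harmless: every application takes $A \subseteq \Phi(\Gamma)$, whose nontrivial elements all have norm at least roughly $998C$ (by the estimate in the proof of Fact \ref{fact:reduction}), so the relevant $R$ always exceeds $400C$. In short, your argument is correct, matches the paper's strategy, and in fact repairs two small imprecisions in the paper's own proof.
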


\begin{proof}
Let $g \in \cup_{n>0} \mathcal{F}(A^{n})$ and let $p \in [x_{0}, gx_{0} ]$. We then have \[
g = g_1 ag_2 a \ldots g_n
\]
for some $g_i \in A$. By Lemma \ref{lem:extension}, there exists $p_1, \ldots, p_{n-1}$, in order from closest to farthest from $x_{0}$, such that \[
d(g_1 a \cdots g_{i} x_{0}, p_i ) < 0.01C. \quad (i=1, \ldots, n-1)
\]
Then we have \[
d(p_i, p_{i+1}) < d(x_{0}, ax_{0} ) + d(x_{0}, g_{i+1} x_{0}) + 0.02C < 1000.02C + R.
\]
Similarly, we have $d(x_{0}, p_1) , d(p_{n-1} x_{0}, gx_{0}) < 1000.02C+R$. For convenience, let $p_{0} := x_0$ and $p_{n} := g x_0$. Then $p$ is at least $(500.01C+ 0.5R)$-close to $p_{i}$ for some $i$. Then we have \[
g = (g_1 a \cdots g_{i} ) \cdot a \cdot (g_{i+1} a \cdots g_{n})\,\,\textrm{and}\,\, d(g_1 a \cdots g_{i} x_0, p) < 600C + R.
\]
(Here, $i$ may be 0 or $n$.) The conclusion follows.
\end{proof}

\begin{lem}\label{lem:semiconvexGrowth}
Let $\Gamma$ be a non-elementary Kleinian group acting on $\mathbb{H}^{d}$. Let $K>0$, and  let $F$ be a $K$-semiconvex subset of $\Gamma$ with critical exponent $\delta_{F} >0$. Then $F$ has purely exponential growth, i.e., there exists $M>0$ such that \[
\frac{1}{M} e^{\delta_{F} r} \le \# \big(B_{r} \cap F \big)
\]for each sufficiently large $r$. In particular, the Poincar{\'e} series $\mathcal{P}_{F}(s)$ for $F$ diverges at $s=\delta_{F}$.
\end{lem}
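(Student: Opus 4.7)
The plan is to translate $K$-semiconvexity into a submultiplicative inequality for the counting function $N(r) := \#(B_r \cap F)$, and then apply Fekete's subadditive lemma to extract the purely exponential lower bound. Divergence will then follow from a one-line integration.

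First, I would establish submultiplicativity. Fix $u, v > 0$ and let $g \in F$ satisfy $u \le \|g\| < u+v$. Applying $K$-semiconvexity at the point $p \in [x_0, gx_0]$ with $d(x_0, p) = u$ produces $h_1, h_2 \in F \cup \{\mathrm{id}\}$ and $c \in B_K$ with $g = h_1 c h_2$, $\|h_1\| \in [u-K, u+K]$, and $\|h_2\| \in [\|g\|-u-2K, \|g\|-u+2K] \subseteq [0, v+2K]$. Since $h_1 c h_2$ uniquely determines $g$, the assignment $g \mapsto (h_1, c, h_2)$ is injective, and counting yields
\[
N(u+v) \;\le\; N(u) + \#B_K \cdot \bigl(N(u+K)+1\bigr)\bigl(N(v+2K)+1\bigr).
\]
Because $\delta_F > 0$ forces $N(r) \to \infty$, for $u, v$ sufficiently large this simplifies to $N(u+v) \le C_0 \, N(u+K)\, N(v+2K)$ for some constant $C_0$ depending on $K$.

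Next, setting $Q(n) := N(n+3K)$, the inequality rewrites as $Q(x+y) \le C_0 Q(x) Q(y)$ for large integers $x, y$, so $\psi(n) := \log\bigl(C_0 Q(n)\bigr)$ is eventually subadditive. By Fekete's lemma, $\psi(n)/n$ converges to $L := \inf_n \psi(n)/n$ and satisfies $\psi(n) \ge L n$ for all large $n$. Since $\psi(n)/n = (\log N(n+3K))/n + o(1)$ and $\delta_F$ is defined as $\limsup_{r \to \infty} (\log N(r))/r$, the existence of the limit forces $L = \delta_F$. Unwinding yields $N(r) \ge M^{-1} e^{\delta_F r}$ for all large $r$, with $M$ depending on $C_0, K, \delta_F$. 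Divergence of the Poincar\'e series at $\delta_F$ is then immediate, since $\mathcal{P}_F(\delta_F) = \delta_F \int_0^\infty N(r) e^{-\delta_F r}\, dr$ and the integrand is bounded below by $\delta_F M^{-1}$ on a set of infinite Lebesgue measure.

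The main technical step is the submultiplicativity inequality; the main subtlety will be carefully bookkeeping the shift constants of size $K$ and $2K$ (which I propose to absorb into the definition of $Q$ and the final constant $M$) and verifying that the infimum appearing in Fekete's lemma coincides with $\delta_F$. Once these are in hand, the passage through Fekete and the extraction of divergence are entirely routine.
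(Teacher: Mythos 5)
Your proposal is correct and takes essentially the same approach as the paper: both use $K$-semiconvexity to factor $g = h_1 c h_2$ through a chosen point on $[x_0,gx_0]$ and thereby derive a submultiplicative inequality for the counting function, which is then iterated to produce the exponential lower bound. The only packaging difference is that you invoke (the eventually-subadditive form of) Fekete's lemma, whereas the paper iterates the inequality $\#(B_{Nr}\cap F)\le \#B_K\cdot\#(B_{(N-1)r}\cap F)\cdot\#(B_{r+3K}\cap F)$ by hand via a direct contradiction argument, so these are two renderings of the same idea.
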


\begin{proof}
Since the action of $\Gamma$ is proper, $B_{100K}$ has finitely many elements; let $N$ be its number. Also, adding $id$ to $F$ does not alter the critical exponent of $F$ nor the growth of $ \# \big(B_{r} \cap F \big)$, so we will assume $id \in F$.

Let us fix $r>0$. Now let $N$ be an arbitrary integer greater than $1$. We will construct a map \[
\mathcal{F} : g \in B_{Nr} \cap F \mapsto \mathcal{F}(g) = (h_1, h_2) \in \big(B_{(N-1) r}\cap F\big) \times \big(B_{r+3K} \cap F\big).
\]
We take the point $p \in [x_{0}, gx_{0}]$ such that $d(x_{0}, p) = \max \big( (N-1) r - K, d(x_{0}, gx_{0}) \big)$. We then take $h_{1}, h_{2} \in F$ and $c \in B_{K}$ for $p$ as in Definition \ref{dfn:semiconvex}, and define $\mathcal{F}(g) := (h_1, h_2)$. In this case, we have \[\begin{aligned}
d(x_{0}, h_1 x_{0}) &< d(x_{0}, p) + K \le (N-1) r, \\
d(h_1 cx_{0}, gx_{0}) &< d(h_1cx_{0}, h_1 x_{0}) + d(h_1 x_{0}, p) + d(p, gx_{0}) \\
&\le 2K + d(p, gx_{0}) \le r + 3K.
\end{aligned}
\]
Hence, $\mathcal{F}(g)$ belongs to the desired codomain. Moreover, note that the map $g \mapsto (h_1, h_2, c)$ is in fact injective, as $g = h_1 c h_2$. Hence, $\mathcal{F}$ is at most $\#B_{K}$-to-1. We conclude that \begin{equation}\label{eqn:repeatedRExp}
\# \big(B_{Nr} \cap F \big) \le \big( \# B_{K} \big) \cdot \big( \# B_{(N-1) r} \cap F \big) \cdot \big( \# B_{r + 3K} \cap F \big).
\end{equation}
Right now, suppose to the contrary that \[
\# \big(B_{r+3K} \cap F \big) = \lambda \cdot \frac{1}{(\#B_{K})\cdot e^{3\delta_{F} K}}\operatorname{exp}\big(\delta_{F} (r+3K) \big) 
\]
for some $\lambda <1$. Then Inequality \ref{eqn:repeatedRExp} for $N=2, 3, \ldots$ imply that \[\#(B_{Nr} \cap F) \le \lambda^{N-1} \cdot (\#B_{r} \cap F) \cdot e^{\delta_{F}(N-1) r}. \quad (N\ge 2)
\]
This implies that $\delta_{F} \le \lambda \delta_{F}$, a contradiction. Hence, we have \[
\# \big( B_{r+3K} \cap F \big) \ge \frac{1}{(\#B_{K})\cdot e^{3\delta_{F} K}}\operatorname{exp}\big(\delta_{F} (r+3K) \big). \quad (r > 0)
\]
This is the desired bound.
\end{proof}

\section{Construction for Theorem \ref{thm:semiconvexConst}}

In this section, we prove Theorem \ref{thm:semiconvexConst} by constructing a set $F = \cup_{i} F_i$. It is not a semigroup but is merely a countable union of semiconvex subsets $F_{i}  \subseteq \Gamma$. Nonetheless, the translate $a\cdot F$ will be a free semigroup with the same desired properties. 

Throughout, $\Gamma$ is a non-elementary, non-convex-cocompact Kleinian group acting on $\mathbb{H}^{d} \ni x_0$. For convenience, let us enumerate $\Gamma$ by \[
\Gamma= \big\{ \mathfrak{g}_{1}, \mathfrak{g}_{2}, \ldots\big\}.
\]

Given $\epsilon>0$, we will construct semiconvex subsets \[
F_{1} \subsetneq F_{2} \subsetneq \ldots \subseteq \Gamma.
\]
and a nested sequence of intervals $\{I_{i} = [\alpha_i, \beta_i] \}_{i > 0}$, i.e., $I_{1} \supsetneq I_{2} \supsetneq \ldots$, such that the following holds for each $i>0$.

1. $\alpha_{i}$ equals $\delta_{F_{i}}$, the critical exponent of $F_i $. Moreover, $\alpha_{1} \ge (1-\epsilon)\delta_{\Gamma}$ holds.

2. $0 < \beta_{i} - \alpha_{i} \le 2^{-i}$.

3. $\mathcal{P}_{F_{i}}(\beta_{i}) > 2^{i}$.

4. For each $j \le i$, $\mathcal{P}_{F_{i}}(\beta_{j}) \le (2 - 2^{j-i})\cdot \mathcal{P}_{F_{j}}(\beta_{j})$.

We will then let $F := \cup_{i > 0} F_{i}$. Note that for each $s>0$, $\mathcal{P}_{F_{i}}(s) \nearrow \mathcal{P}_{F}(s)$ as $i \rightarrow +\infty$. In particular, for each $i>0$ we have\[
\mathcal{P}_{F}(\alpha_{i}) \ge \mathcal{P}_{F_{i}}(\alpha_{i}) = +\infty
\]
and \[
\mathcal{P}_{F}(\beta_{i}) =\lim_{j \rightarrow +\infty} \mathcal{P}_{F_{j}}(\beta_{i}) \le 2 \mathcal{P}_{F_{i}}(\beta_{i}) < +\infty.
\]
Lastly, we have \[
\mathcal{P}_{F} (\beta_{i}) \ge \mathcal{P}_{F_{i}}(\beta_{i}) > 2^{i}
\]
for each $i$, which implies that $\lim_{s \searrow \lim_{i} \beta_{i}} \mathcal{P}_{F}(s) = +\infty$. In summary, $\mathcal{P}_{F}(s)$ diverges at $s = \lim_{i} \alpha_{i} = \lim_{i} \beta_{i}$, the critical exponent of $F$.

In fact, we will choose a finite set $K \subseteq \Phi(\Gamma)$ and elements $\varphi_1, \varphi_2, \ldots \in \Phi(\Gamma)$ and declare \[
F_{i} := \cup_{n>0} \mathcal{F}^{n}\Big( \big(K \cup \{\varphi_1, \ldots, \varphi_{i-1}\}\big)^{n}\Big).
\]
Let us give more description about $\varphi_{i}$'s. We will define numbers $100<R_{0} < R_{1} < R_{2} < \ldots$ that increase exponentially. Then $\phi_{i} \in \Gamma$ will be chosen such that $\|\phi_{i}\| \ge R_{i}$. We  also consider an enumeration $G = \{\mathfrak{g}_{1}, \mathfrak{g}_{2}, \ldots\}$. Then $\varphi_{i}$ is constructed using $\phi_{i}$ and $\mathfrak{g}_{i}$. As a result, we will also have $\|\varphi_{i}\| \ge R_{i}$.

We now begin the construction by fixing $0<\epsilon < 1$.
Recall that \[
\delta_{\Gamma} := \limsup_{r \rightarrow +\infty} \frac{\log \# B_{r}}{r} = \limsup_{r \rightarrow +\infty} \frac{\log \# \big(B_{r} \cap \Phi(\Gamma) \big)}{r}.
\]
We take $r_0$ such that \[
\#B_{r} \le \operatorname{exp} \big((1+0.001\epsilon)\delta_{\Gamma}r\big)\quad \textrm{for each $r \ge r_{0}$}.
\] Next, we take a large radius $R_{0}> \frac{10^{7}(C + r_{0})(1+\delta_{\Gamma})}{\delta_{\Gamma}\epsilon}$ such that \begin{equation}\label{eqn:RSuffLarge}
\begin{aligned}
\#\big(B_{R_{0}}\cap \Phi(\Gamma)\big) &\ge \operatorname{exp}\big((1- 0.001\epsilon)\delta_{\Gamma}R_{0}\big).
\end{aligned}
\end{equation}
Since $(1-0.003\epsilon)R_{0} \ge 0.5R_{0} > r_{0}$, we have\[\begin{aligned}
\#B_{(1-0.003\epsilon)R_{0}} &\le \operatorname{exp}\big((1 + 0.001\epsilon)(1-0.003\epsilon)\delta_{\Gamma}R_{0}) \\
&\le \operatorname{exp}((1- 0.002\epsilon)\delta_{\Gamma} R_{0}\big).
\end{aligned}\]
This implies that \[\begin{aligned}
\# \big( B_{R_{0}} \cap \Phi(\Gamma) \setminus B_{(1-0.003\epsilon)R_{0}}\big) &\ge \big(1 -e^{-0.001\epsilon \delta_{\Gamma} R_{0} }\big) \operatorname{exp}\big((1 - 0.001\epsilon)\delta_{\Gamma}R_{0}\big) \\
&\ge 0.99\operatorname{exp}\big((1 - 0.001\epsilon)\delta_{\Gamma}R_{0}\big)\\
&\ge \operatorname{exp}((1- 0.002\epsilon)\delta_{\Gamma}R_{0}).
\end{aligned}
\]

Take a maximally $0.004\epsilon R_{0}$-separated subset $K$ of $B_{R_{0}} \cap \Phi(\Gamma) \setminus B_{(1-0.003\epsilon)R_{0}}$. Here, $B_{0.004\epsilon R_{0}}$ has at most $e^{0.005\epsilon R_{0}}$ elements, and $K \cdot B_{0.004\epsilon R_{0}}$ covers $B_{R_{0}} \cap \Phi(\Gamma) \setminus B_{(1-0.003\epsilon)R_{0}}$. This implies that $\#K \ge \operatorname{exp}((1- 0.007\epsilon)\delta_{\Gamma}R_{0})$.

We set $F_{1} := \cup_{n>0} \mathcal{F}_{n}(K^{n})$. Since $K$ is a finite subset of $\Phi(\Gamma)$, Lemma \ref{lem:semiconvex} tells us that $F_{1}$ is semiconvex. We now want to understand the critical exponent $\alpha_{1} := \delta_{F_{1}}$ of $F_{1}$.

\begin{claim}\label{claim:FnInj}
For each $n$, $\mathcal{F}_{n} : K^{n} \rightarrow \Gamma$ is injective. 
\end{claim}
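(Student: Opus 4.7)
The plan is to induct on $n$, reducing at each step to showing that the leading letters of the two tuples agree. Suppose $(g_1, \ldots, g_n), (g_1', \ldots, g_n') \in K^n$ satisfy $\mathcal{F}_n(g_1, \ldots, g_n) = \mathcal{F}_n(g_1', \ldots, g_n')$, and call this common value $g$. The base case $n=1$ is vacuous since $\mathcal{F}_1$ is the identity map. For $n \ge 2$, I will show that $g_1 = g_1'$; cancelling $g_1$ and then $a$ from both sides then yields the identity $\mathcal{F}_{n-1}(g_2, \ldots, g_n) = \mathcal{F}_{n-1}(g_2', \ldots, g_n')$, so the inductive hypothesis completes the argument.

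For the key step, every element of $K \subseteq \Phi(\Gamma)$ satisfies Property \ref{eqn:propA}, so Lemma \ref{lem:extension} applied to both decompositions of $g$ produces points $p_1$ and $p_1'$ on the geodesic $\eta := [x_0, g x_0]$ with $d(g_1 x_0, p_1) < 0.01 C$ and $d(g_1' x_0, p_1') < 0.01 C$. Since $K \subseteq B_{R_0} \setminus B_{(1 - 0.003\epsilon) R_0}$, the norms $\|g_1\|$ and $\|g_1'\|$ both lie in $[(1-0.003\epsilon) R_0, R_0]$ and therefore differ by at most $0.003 \epsilon R_0$. Because $p_1$ and $p_1'$ lie on the common geodesic $\eta$ at distances within $0.01 C$ of $\|g_1\|$ and $\|g_1'\|$ from $x_0$ respectively, we obtain $d(p_1, p_1') \le 0.003 \epsilon R_0 + 0.02 C$, and hence $d(g_1 x_0, g_1' x_0) \le 0.003 \epsilon R_0 + 0.04 C$.

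Our choice $R_0 > \frac{10^{7}(C + r_0)(1 + \delta_{\Gamma})}{\delta_{\Gamma} \epsilon}$ in particular dominates $C/\epsilon$ by a wide margin, so $0.04 C < 0.001 \epsilon R_0$ and the bound above is strictly less than $0.004 \epsilon R_0$. Since $K$ was chosen to be $0.004 \epsilon R_0$-separated, this forces $g_1 = g_1'$, closing the induction. The main technical obstacle is precisely this interplay of constants: the fellow-travelling tolerance coming from $(\ln 2)$-hyperbolicity is $O(C)$, whereas both the width of the annulus in which $K$ sits and the separation scale of $K$ are $O(\epsilon R_0)$. The construction only succeeds because $R_0$ has been taken much larger than $C/\epsilon$, so that the geodesic witnesses $p_1, p_1'$ are forced to collide inside the separation ball around each letter.
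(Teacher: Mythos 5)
Your proof is correct and follows essentially the same route as the paper: apply Lemma \ref{lem:extension} to both decompositions to place $g_1 x_0$ and $g_1' x_0$ within $O(C)$ of nearby points on the common geodesic $[x_0, gx_0]$, use the annulus constraint on $K$ to see those points are $0.003\epsilon R_0$-close in distance from $x_0$, and then invoke the $0.004\epsilon R_0$-separation of $K$ to force $g_1 = g_1'$, closing an induction on $n$. The only cosmetic difference is that you carry the sharper $0.01C$ constant from Lemma \ref{lem:extension} and spell out the constant comparison $0.04C < 0.001\epsilon R_0$, which the paper leaves implicit.
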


\begin{proof}[Proof of Claim \ref{claim:FnInj}]
To see this claim, suppose that $\mathcal{F}_{n}(g_1, \ldots, g_n) = \mathcal{F}_{n}(h_1, \ldots, h_n) = u$ for some $u \in \Gamma$ and $g_1, \ldots, g_n, h_1, \ldots, h_n \in K$. By the construction and Lemma \ref{lem:extension}, there exist $p \in [x_{0} , ux_{0}]$ that is $C$-close to $g_1 x_{0}$ and $q \in [x_{0}, ux_{0} ]$ that is $C$-close to $h_1 x_{0}$. Recall that $d(x_{0}, g_1 x_{0})$ and $d(x_{0}, h_1 x_{0})$ both lie in the interval $((1-0.003\epsilon)R_{0}, R_{0})$. Hence, they differ by at most $0.003\epsilon R_{0}$. This means that $d(x_{0}, p)$ and $d(x_{0}, q)$ differ by at most $0.003\epsilon R_{0} + 2C$. Since $p$ and $q$ lie on the same geodesic, we deduce that $d(p, q) \le 0.003\epsilon R_{0} + 2C$. This in turn implies that $g_1 x_{0}$ and $h_1x_{0}$ are $(0.003\epsilon R_{0} + 4C)$-close, and hence $0.004\epsilon R_{0}$-close.

Recall that $g_1$ and $h_1$ are drawn from $K$, a $0.004\epsilon R_{0}$-separated set. Hence, the above distance inequality implies $g_1 = h_1$ and $\mathcal{F}(g_2, \ldots, g_n) = \mathcal{F}(h_2, \ldots, h_n)$. 
We inductively conclude that $g_i = h_i$ for each $i$.
\end{proof}

Given the claim, there are $(\#K)^{n} \ge  \operatorname{exp}((1- 0.007\epsilon)\delta_{\Gamma}R_{0}n)$ elements in $\mathcal{F}_{n}(K^{n})$, which is contained in $B_{n(R_{0} + 1000C)} \cap F_{1}$. It follows that \[
\delta_{F_{1}} \ge (1- 0.007\epsilon)\delta_{\Gamma}R_{0} \cdot \frac{1}{R_{0}+1000C} \ge (1-0.008\epsilon) \delta_{\Gamma}.
\]
We finally take $\beta_{1} \in (\alpha_{1}, \alpha_{1} + 0.5)$ that is close enough to $\alpha_{1}$ such that $\mathcal{P}_{F_{1}}(\beta_{1}) > 2$. This concludes the construction of $I_{1} = [\alpha_1, \beta_{1}]$ and $F_{1}$.

Now, having constructed $I_{1} \supsetneq \ldots \supsetneq I_{k}$ and $\varphi_{1}, \ldots, \varphi_{k-1} \in \Phi(\Gamma)$ such that \[
F_{k} := \cup_{n} \mathcal{F}\Big(\big( K \cup \{\varphi_1, \ldots, \varphi_{i-1}\} \big)^{n}\Big)
\]
satisfy Condition 1, 2, 3 and 4 for $i=1, \ldots, k$, we will now construct $I_{k+1}\subsetneq I_{k}$ and $\varphi_{k} \in \Phi(\Gamma)$.

For each $j \le k$, let us denote $M_{j} := \mathcal{P}_{F_{k}} (\beta_{j})$. Recall that Condition 4 for $i=k$ tells us that 
\[
M_{j}\le (2-2^{j-k}) \mathcal{P}_{F_{j}}(\beta_{j})<  (2-2^{j-k-1}) \mathcal{P}_{F_{j}}(\beta_{j}).
\]
Hence, there exists a small enough $0<\epsilon_{j}<1/2$ such that \begin{equation}\label{eqn:MEpsilon}
(M_{j} +2\epsilon_{j} + 4M_{j}\epsilon_{j} )\frac{1}{1-2\epsilon_{j}M_{j}} < (2-2^{j-k-1}) \mathcal{P}_{F_{j}}(\beta_{j}).
\end{equation}
We take sufficiently large $R_{k}>100R_{k-1}$ such that \begin{enumerate}
\item $K \cup \{\varphi_1, \ldots, \varphi_{k-1} \} \subseteq B_{R_{k} - 10^{4} C}$, and 
\item $e^{-\beta_{j} R_{k}} \le \epsilon_{j}$ for each $j \le k$. 
\end{enumerate}
We now take $\phi_{k} \in \mathcal{O}_{C, C} \setminus B_{R_{k} + 2500C}$ using Lemma \ref{lem:nonConvex}. Now, let\[
\varphi_{k} := \Phi(\phi_{k}) \cdot a \cdot \Phi(\mathfrak{g}_{k})
\]
By Fact \ref{fact:PropAConcat}, this is an element of $\Phi(\Gamma)$. Furthermore, Lemma \ref{lem:extension} tells us that $\|\varphi_{k}\| \ge \|\Phi(\phi_{k})\| \ge \|\phi_{k}\| - 2500C \ge R_{k}$.
We then define \[
F_{k+1} := \cup_{n} \mathcal{F}\Big(\big( K \cup \{\varphi_1, \ldots, \varphi_{k}\} \big)^{n}\Big).
\]
As $F_{k+1} \supsetneq F_{k}$, we have $\alpha_{k+1} := \delta_{F_{k+1}} \ge \delta_{F_{k}}=\alpha_{k}$. Since $F_{k+1}$ is semiconvex by Lemma \ref{lem:semiconvex} and has purely exponential growth by Lemma \ref{lem:semiconvexGrowth}, the Poincar{\'e} series $\mathcal{P}_{F_{k+1}}(s)$ diverges at $s = \alpha_{k+1}$. We take \[
\beta_{k+1} \in \Big(\alpha_{k+1}, \, \min \big(\alpha_{k+1} + 2^{-k-1}, \beta_{k} \big) \Big)
\] such that $\mathcal{P}_{F_{k+1}}(\beta_{k+1}) > 2^{k+1}$. 

It remains to check Condition 4 with $i=k+1$. For $j = k+1$ it is clear. Now choose $j < k+1$. To ease the notation, let \[\begin{aligned}
K_{-} &:= K \cup\{\varphi_1, \ldots, \varphi_{k-1}\}, \\
K_{+} &:= K \cup \{\varphi_1, \ldots,\varphi_{k-1}, \varphi_{k}\} = K_{-} \cup \{\varphi_{k}\},\\
F_{k   } &= \cup_{n>0} \mathcal{F}(K_{-}^{n}), \\
\mathcal{B} &:= \cup_{n>0} \mathcal{F}(\{\varphi_k \}^{n} ) = \big\{\varphi_{k} ( a\varphi_{k})^{n} : n \ge 0 \big\}. 
\end{aligned}
\]
We need to evaluate \[
\mathcal{P}_{F_{k+1}}(\beta_{j}) = \sum_{g \in \cup_{n} \mathcal{F}^{n} ( K_{+}^{n})} e^{- \beta_{j} d(x_0, g)}.
\]
To do this, let us consider spaces \[\begin{aligned}
\Omega_{n}^{+} &:= \mathcal{B} \times \left(\prod_{i=1}^{n} \big(  F_{k   } \times \mathcal{B} \big) \right)\times \big(F_{k   } \sqcup \{\ast\} \big),\\
\Omega_{n}^{-} &= F_{k   } \times \left(\prod_{i=1}^{n} \big(  \mathcal{B} \times F_{k   } \big) \right)\times \big(\mathcal{B} \sqcup \{\ast\} \big).
\end{aligned}
\]
for $n=0, 1, \ldots$ and let $\Omega^{\pm} := \sqcup_{n=0}^{\infty} \Omega_{n}^{\pm}$. Let $\Omega := \Omega^{+} \sqcup \Omega^{-}$.

We will define a real-valued function $f$ on $\Omega$ and a mapping $\rho : \Omega \rightarrow G$. Our goal is to show that $\rho(\Omega)$ contains entire $\cup_{n} \mathcal{F}(K_{+}^{n})$, and $f(\omega) \ge e^{-\beta_{j} \|\rho(\omega)\|}$ for some $\omega \in \Omega$ such that $\rho(\omega) \in \cup_{n} \mathcal{F}(K_{+}^{n})$. This will then imply that \[
\sum_{g \in \cup_{n} \mathcal{F}(K_{+}^{n})} e^{- \beta_{j} \|g\|} \le \sum_{\omega \in \Omega} f(\omega).
\]

Let $\omega = (u_0, u_{1}, \ldots, u_{2n-2}, u_{2n-1}) \in \Omega_{n}^{+}$. We then define \[\begin{aligned}
\rho(\omega) &:= \left\{ \begin{array}{cc} u_0 a u_1 \cdots a u_{2n-2} & u_{2n-1} = \ast \\ u_0 a u_1 \cdots a u_{2n-1} & \textrm{otherwise}\end{array}\right.,\quad f(\omega)  &= \operatorname{exp}\left( -\beta_{j} \sum_{l : u_{l} \neq \ast}  \|u_{l}\| \right).
\end{aligned}
\]
We define $\rho$ and $f$ the same way for elements of $\Omega_{n}^{-}$.

Let us now show the desired property. For this, let $g  \in \cup_{n }\mathcal{F}(K_{+}^{n})$:\[
g = g_1 a g_2 \cdots a g_{N}
\]
for some $N \ge 1$, where each $g_{i}$ is drawn from $K_{+} = K_{-}\cup \{\varphi_{k}\}$. We will first describe the case where $g_{1} = \varphi_{k}$.

Let us record when $K_{-}$ and $\varphi_{k}$ alternate, i.e., let \[
\{ i(1) < \ldots < i(T) \} := \{ 1 \le i \le \ldots N-1: \textrm{exactly one of $g_{i}, g_{i+1}$ is $\varphi_{k}$}\}.
\]
For convenience, we set $i(0) := 0$ and $i(T+1) := N$. We then have \[
g = u_0 \cdot \prod_{s=1}^{\lfloor T/2 \rfloor} \big(a u_{2s-1} a u_{2s} \big) \cdot v,
\]
where \[\begin{aligned}
u_{s} &:= g_{i(s) + 1} \prod_{l=i(s)+2}^{i(s+1)} ag_{l}\quad (s=0, \ldots, T), \quad v &:= \left\{ \begin{array}{cc} a u_{T} & \textrm{$T$ is odd} \\ id & \textrm{otherwise}. \end{array}\right.
\end{aligned}
\]
Here, it is clear that $u_0, u_2, \ldots \in \mathcal{B}$ and $u_1, u_3, \ldots \in F_{k   }$. In summary, $g$ equals $\rho(\omega)$ for $\omega = (u_0, u_1, \ldots, u_{T})$ if $T$ is odd and $\omega = (u_0, \ldots, u_T, \ast)$ if $T$ is even. In both cases, we have $\omega \in \Omega_{\lfloor T/2 \rfloor}^{+}$.

Furthermore, Lemma \ref{lem:extension} tells us that \[
\|g\| \ge \sum_{s=0}^{T} \|u_{s}\|. 
\]
It is clear that $f(\omega) \ge e^{-\beta_{j} \|\rho(\omega)\|}$.

When $g_{1} \in K_{-}$, we can similarly describe $g$ in terms of elements of $\Omega_{\lfloor T/2 \rfloor}^{-}$.

It remains to estimate the summation of $f$ over $\Omega$. We have \[
\begin{aligned}
\sum_{\w \in \Omega_{n}^{+}} f(\omega) &= \sum_{g_{0}, g_{1}, \ldots, g_{n}\in \mathcal{B}, h_1, \ldots, h_n \in F_{k   }} e^{-\beta_{j} \|g_{0}\|} \cdot \prod_{l=1}^{n} e^{-\beta_{j} \|g_{l}\|}e^{-\beta_{j} \|h_{l}\|} \cdot \left( 1 + \sum_{h \in F_{k   }} e^{-\beta_{j} \|h\|} \right) \\
&= \mathcal{P}_{\mathcal{B}} (\beta_{j}) \Big( \mathcal{P}_{F_{k   }} (\beta_{j}) \mathcal{P}_{\mathcal{B}} (\beta_{j}) \Big)^{n} \big(1 + \mathcal{P}_{F_{k   }} (\beta_{j}) \big).
\end{aligned}
\]
Summing these up for $n=0, 1, \ldots$, we have \[
\sum_{\w \in \Omega^{+}} f(\omega) = \mathcal{P}_{\mathcal{B}} (\beta_{j})\big(1 + \mathcal{P}_{F_{k   }} (\beta_{j}) \big) \frac{1}{1- \mathcal{P}_{F_{k   }} (\beta_{j}) \mathcal{P}_{\mathcal{B}} (\beta_{j})}.
\]
Similarly we have \[
\sum_{\w \in \Omega^{+}} f(\omega) = \mathcal{P}_{F_{k   }} (\beta_{j})\big(1 + \mathcal{P}_{\mathcal{B}} (\beta_{j}) \big) \frac{1}{1- \mathcal{P}_{F_{k   }} (\beta_{j}) \mathcal{P}_{\mathcal{B}} (\beta_{j})}.
\]
At this point, note that elements $\varphi_k (a\varphi_k )^{n}$ of $\mathcal{B}$ satisfy \[
\| \varphi_k (a\varphi_k )^{n}\| \ge (n+1) \|\varphi_{k}\| \ge (n+1) R_{k}.
\]
by Lemma \ref{lem:extension}. Hence, we can estimate  \[
\mathcal{P}_{\mathcal{B}}(\beta_{j}) = \sum_{n \ge 0} e^{-\beta_{j}\|\varphi_{k} (a \varphi_{k})^{n}\|} \le \sum_{n \ge 0} e^{-(n+1) \beta_{j}\|\varphi_{k}\|}  = \frac{e^{-\beta_{j} R_{k}} }{1-e^{-\beta_{j} R_{k}} } \le 2 \epsilon_{j}.
\]
Recall our choice of $\epsilon_{j}$'s in \ref{eqn:MEpsilon}. We deduce that \[
\sum_{\omega \in \Omega} f(\omega) \le \Big( M_{j} (1 + 2\epsilon_{j}) + 2\epsilon_{j} ( 1 + M_{j}) \Big) \frac{1}{1-2\epsilon_{j} M_{j}} < (2-2^{j-k-1}) \mathcal{P}_{F_{j}}(\beta_{j}).
\]
Condition 4 is now established for $i = k+1 > j$. 

\section{Patterson-Sullivan measure}

So far, we have constructed \[
F := \cup_{n>0} \mathcal{F} \Big( \big(K \cup \{\varphi_1, \varphi_2, \ldots \}\big)^{n} \Big)
\]
so that $\mathcal{P}_{F}(s)$ diverges at $s = \delta_{F}$. We now improve Claim \ref{claim:FnInj} and prove that $F$ behaves like an infinite-rank quasi-tree.

\begin{lem}\label{lem:GromNestingPre}
Let $g, h \in K \cup \{\varphi_1, \varphi_2, \ldots \}$ and suppose that there exists $z \in \mathbb{H}^{d}$ such that \[
d\big(gx_0, [x_0, z]\big), d\big(hx_0, [x_0, z]\big) \le 50C.
\]
Then $g = h$.
\end{lem}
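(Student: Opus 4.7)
The plan is to proceed by contradiction, splitting according to which of the constructed sets each of $g, h$ lies in. When both $g, h \in K$, let $p_g, p_h$ be the nearest-point projections of $gx_0, hx_0$ onto $[x_0, z]$. Norms of elements of $K$ lie in $((1 - 0.003\epsilon) R_0, R_0]$ and satisfy $|d(x_0, p_g) - \|g\|| \le 50C$ (likewise for $h$), so collinearity on $[x_0, z]$ gives $d(p_g, p_h) \le 0.003\epsilon R_0 + 100C$ and hence $d(gx_0, hx_0) \le 0.003\epsilon R_0 + 200C$. The hypothesis $R_0 > 10^{7}(C + r_0)(1 + \delta_\Gamma)/(\delta_\Gamma \epsilon)$ makes $200C$ negligible relative to $0.001\epsilon R_0$, so the above violates the $0.004\epsilon R_0$-separation of $K$ unless $g = h$.

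The remaining case is when at least one of $g, h$ is some $\varphi_m$; after relabeling I take $h = \varphi_m$ with $m$ the maximum $\varphi$-index appearing, so $\|g\| \le R_m - 10^{4}C$ whereas $\|h\| \ge R_m$. First I would apply Lemma~\ref{lem:fellowTravel} to the hypothesis to obtain a point $q \in [x_0, hx_0]$ within $O(C)$ of $gx_0$ at depth $\approx \|g\|$ from $x_0$. Next, Lemma~\ref{lem:extension} applied to $\varphi_m = \Phi(\phi_m) \cdot a \cdot \Phi(\mathfrak{g}_m)$ pins a point of $[x_0, hx_0]$ within $0.01C$ of $\Phi(\phi_m) x_0$, at depth $\ge R_m$ from $x_0$. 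Since $\|g\| \ll R_m$, the point $q$ sits deep inside the initial subsegment of $[x_0, hx_0]$, which fellow-travels $[x_0, \Phi(\phi_m) x_0]$; chaining the approximations places $gx_0$ within $O(C)$ of a point $q' \in [x_0, \Phi(\phi_m) x_0]$ at depth $\approx \|g\|$. By the norm gap, $q'$ lies at depth $\ge R_0/2$ from $x_0$ and $\ge 10^{4} C$ from $\Phi(\phi_m) x_0$, so $q'$ is deep in the middle of $[x_0, \Phi(\phi_m) x_0]$.

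The contradiction then comes from the $\mathcal{O}$-property of $\phi_m$: it supplies a segment $[x, y]$ with $d(x, x_0), d(y, \phi_m x_0) \le M$ and $d([x, y], \Gamma x_0) \ge M$. Lemma~\ref{lem:fellowTravel} shows the middle of $[x_0, \phi_m x_0]$ tracks $[x, y]$, so it avoids $\Gamma x_0$ by nearly $M$. Since $\Phi(\phi_m)$ lies in $\{\phi_m, b\phi_m, \phi_m b, b\phi_m b\}$, the endpoint $\Phi(\phi_m) x_0$ differs from $\phi_m x_0$ either by an $O(C)$-translation (using that $\phi_m$ is an isometry, whence $d(\phi_m b x_0, \phi_m x_0) = \|b\| \le C$) or by the action of $b \in \Gamma$ (under which $\Gamma x_0$ is invariant, so one pulls back to $[b^{-1} x_0, \phi_m x_0]$, tracks, and pushes forward); either way, a further fellow-traveling step transfers the avoidance estimate to $[x_0, \Phi(\phi_m) x_0]$ with an $O(C)$ loss. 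But $gx_0 \in \Gamma x_0$ sitting within $O(C)$ of $q'$ in this avoidance zone is the contradiction.

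The main obstacle is the constant bookkeeping: each use of Lemma~\ref{lem:fellowTravel} and each swap between $\phi_m$ and $\Phi(\phi_m)$ leaks an additive $O(C)$ slack, so the $\mathcal{O}_{M, M}$-parameter used in selecting $\phi_m$ must comfortably dominate $50C$ plus all accumulated losses. Lemma~\ref{lem:nonConvex} permits $M$ to be arbitrarily large, so the construction accommodates any needed margin; still, one has to track every $C$-loss through the chain of geodesic approximations to confirm the final threshold safely exceeds $50C$.
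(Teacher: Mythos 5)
Your proposal is correct and follows essentially the same approach as the paper's proof: split on whether both elements lie in $K$ (use the $0.004\epsilon R_0$-separation) versus one being $\varphi_m$ with the maximal index (fellow-travel the hypothesis geodesic to $[x_0, \Phi(\phi_m)x_0]$, then invoke the $\mathcal{O}$-property of $\phi_m$ to produce a $\Gamma$-orbit–avoiding zone and contradict the fact that the other orbit point lands in it). Your closing remark about enlarging the $\mathcal{O}_{M,M}$-parameter to dominate the accumulated $O(C)$ slack is well taken — the paper's construction uses $\mathcal{O}_{C,C}$ while the proof invokes $\mathcal{O}_{10C,10C}$, and a careful count of the fellow-traveling losses (roughly $50C$ from the hypothesis alone) shows the margin should be chosen a comfortable multiple of $50C$; Lemma~\ref{lem:nonConvex} indeed licenses this.
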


\begin{proof}
Let $p, q \in [x_0, z]$ be such that $d(gx_0, p), d(hx_0, q) \le 50C$. Suppose to the contrary that $g \neq h$. 

Consider first the case that $g, h \in K$. In this case, $\|g\|$ and $\|h\|$ lie between $(1-0.003\epsilon)R_0$ and $R_0$, and differ by less than $0.003\epsilon R_{0}$. This implies that $d(x_0, p)$ and $d(x_0, q)$ differ by less than $0.0035\epsilon R_0$. Since $p$ and $q$ are on the same geodesic $[x_0, z]$, this implies that $d(p, q) < 0.0035\epsilon R_0$ and hence $d(gx_0, hx_0 ) < 0.004\epsilon R_0$. This contradicts the requirement that $K$ is $0.004\epsilon R_0$-separated.

Next, suppose that one of $g, h$ are outside of $K$. By swapping $g$ and $h$ if necessary, this means that there exists $k\ge 1$ such that $g = \varphi_k$ and $h \in K \cup \{\varphi_1, \ldots, \varphi_{k-1}\}$.

For concreteness, let us write \[
\Phi(\phi_{k}) = b_1 \phi_{k} b_{2}
\] 
for some $b_{1}, b_{2} \in \{id, b\}$. Recall that $\|b_1\|, \|b_2 \| \le C$. Recall also that $\|\phi_k \| \ge R_{k} \ge \|h\| + 10000C$. Hence, $\|\Phi(\phi_k)\| \ge \|h\| + 9998C$. Now, by applying Lemma \ref{lem:extension} to \[
\varphi_k = (\Phi(\phi_k)) \cdot a \cdot \Phi(\mathfrak{g}_{k}),
\]
there exists a point $P \in [x_0, \varphi_k x_0 ]$ such that $d(\Phi(\phi_k) x_0, P) \le 0.1C$. Now, Lemma \ref{lem:fellowTravel} for $[x_0,  \varphi_k x_0 ]$ and $[x_0, p]$ tells us that there exists $P' \in [x_0, p]$ such that $d(P, P') \le 50.1C$. At this moment, $P'$ is a point on $[x_0, z]$ with $d(x_0, P') \ge \|\Phi(\phi_k) \| - 51C \ge \|h\| + 9940C$. This implies that $q$ is closer to $x_0$ than $P'$ is. We conclude $q \in [x_0, P']$.

Now let us get back to the property of $\phi_k$. Since $\phi_k \in \mathcal{O}_{10C, 10C}$, there exists a point $x, y \in \mathbb{H}^{d}$ with $d(x_0, x), d(\phi_k x_0, y) =10C$ and such that $d([x, y], \Gamma x_0 ) = 10C$.

Then $b_1 [x, y]$ and $b_1 [x_0, \phi_k x_0]$ have pairwise $10C$-close endpoints. Moreover, $b_1 [x_0, \phi_k x_0]$ and $[x_0, P']$ have pairwise $52C$-close endpoints. Lastly, $q$ is $100C$-far from both $x_0$ and $P'$. Hence, Lemma \ref{lem:fellowTravel} tells us that $q$ is $0.1C$-close to $b_1 [x, y ]$. Thus, $hx_0$ is $2C$-close to $b_1 [x, y]$. This contradict the property of $[x, y]$.

From the above contradictions, we conclude $g=h$.
\end{proof}

\begin{lem}\label{lem:GromNesting}
Let $g_1, g_2, \ldots, g_{n}, h_1, \ldots, h_{m} \in  K \cup \{\varphi_1, \varphi_2, \ldots \}$, let $u = \mathcal{F}(g_1, \ldots, g_n)$, $v = \mathcal{F}(h_1, \ldots, h_m )$ and suppose that $(v h_m^{-1} x_0 | u x_0)_{vx_0} < 10C$. Then we have $n \ge m$ and $g_{i} = h_{i}$ for $i=1, \ldots, m$.
\end{lem}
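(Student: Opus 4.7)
I will induct on $m$, with the sole inductive crux being the matching $g_1 = h_1$. Once this is established, left-multiplying by $(h_1 a)^{-1} = (g_1 a)^{-1}$---an isometry of $\mathbb{H}^{d}$---replaces $(u, v)$ by $(u', v') = (\mathcal{F}(g_2, \ldots, g_n), \mathcal{F}(h_2, \ldots, h_m))$ and preserves the Gromov product hypothesis $(v' h_m^{-1} x_0 \mid u' x_0)_{v' x_0} < 10C$, so the induction advances. The base case $m=1$ requires only the matching argument with no further reduction.

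\textbf{Matching $g_1 = h_1$.} The hypothesis $(vh_m^{-1} x_0 \mid ux_0)_{vx_0} < 10C$ places $vx_0$ within $11C$ of $[vh_m^{-1} x_0, ux_0]$ by the $(\ln 2)$-hyperbolicity of $\mathbb{H}^{d}$, and slim triangles on $\triangle(x_0, vh_m^{-1} x_0, ux_0)$ then put $vx_0$ within $12C$ of $[x_0, vh_m^{-1} x_0] \cup [x_0, ux_0]$. Lemma \ref{lem:extension} applied to $v$ supplies a milestone $Q_{m-1} \in [x_0, vx_0]$ with $d(vh_m^{-1} x_0, Q_{m-1}) < 0.01C$ and $d(Q_{m-1}, vx_0) \approx \|h_m\| \ge (1-\epsilon) R_0 \gg 12C$, which rules out the $[x_0, vh_m^{-1} x_0]$ branch; hence $d(vx_0, [x_0, ux_0]) \le 12C$. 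When $m \ge 2$, a second use of Lemma \ref{lem:extension} gives $P_1 \in [x_0, vx_0]$ with $d(h_1 x_0, P_1) < 0.01C$ and $d(P_1, vx_0) \ge \|v\| - \|h_1\| - 0.01C \ge \|h_2\| - 0.01C \gg 12C$; slim triangles on the triangle with vertices $x_0$, $vx_0$, and the nearest point $P^{*} \in [x_0, ux_0]$ to $vx_0$ then trap $P_1$, and hence $h_1 x_0$, within $\ln 2$ of $[x_0, P^{*}] \subseteq [x_0, ux_0]$. When $m = 1$, the equality $h_1 x_0 = vx_0$ already yields $d(h_1 x_0, [x_0, ux_0]) \le 12C$. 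Since $g_1 x_0$ lies at distance at most $0.01C$ from $[x_0, ux_0]$, Lemma \ref{lem:GromNestingPre} applied with $z = ux_0$ delivers $g_1 = h_1$.

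\textbf{Ruling out $n=1$, $m \ge 2$; the main obstacle.} For the induction to step down properly, I must also secure $n \ge 2$ whenever $m \ge 2$. But if $n=1$, then $[x_0, ux_0] = [x_0, g_1 x_0]$ has length $\|g_1\|$, so the reverse triangle inequality gives $d(vx_0, [x_0, ux_0]) \ge \|v\| - \|g_1\| \ge \|h_2\| \gg 12C$, contradicting the bound already established. The chief delicate point throughout is quantitative: one has to balance the $12C$ slim-triangle slack against the lower bound $\|h_i\| \ge (1-\epsilon) R_0$ (for $h_i \in K$) or $\|h_i\| \ge R_k$ (for $h_i = \varphi_k$) hard-coded into the construction of the alphabet. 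The construction was arranged exactly so that every letter size dominates $C$ by orders of magnitude, making this margin comfortable at every stage of the induction.
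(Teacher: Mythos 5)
Your proof is correct and follows essentially the same strategy as the paper: use Lemma~\ref{lem:extension} to locate milestones on $[x_0,ux_0]$ and $[x_0,vx_0]$, exploit the hyperbolicity (you via slim triangles and a nearest-point projection, the paper via Gromov's four-point inequality and a case split on $d(x_0,Q)$ vs.\ $d(x_0,p)$) to place both $g_1x_0$ and $h_1x_0$ near a common geodesic, invoke Lemma~\ref{lem:GromNestingPre} to conclude $g_1=h_1$, and then induct by left-multiplying by $(g_1a)^{-1}$. The one place where you are more careful than the paper is in explicitly ruling out $n=1$ when $m\ge 2$; the paper's ``we can run this inductively'' silently assumes the reduced tuple $(g_2,\ldots,g_n)$ is nonempty, and your reverse-triangle-inequality argument (showing $d(vx_0,[x_0,ux_0])\ge\|v\|-\|g_1\|\ge\|h_2\|\gg 12C$ would contradict the established bound) is exactly the missing justification.
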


\begin{proof}
First, note that Lemma \ref{lem:extension} asserts that $(x_0 | vh_{m}^{-1} x_0)_{v x_0} \le 0.1C$ and $x_0 | vx_0)_{v h_m^{-1} x_0} = d(x_0, h_m x_0) - (x_0 | vx_0)_{v h_m^{-1} x_0} \ge 900C$. By Gromov's 4-point inquality (Lemma \ref{lem:Gromov}), we deduce that \begin{equation}
\label{eqn:xuvGrom}(x_0 | ux_0 )_{v x_0} < 10.1C.
\end{equation}

By Lemma \ref{lem:extension}, there exists a point $p \in [x_0, ux_0]$ that is $0.1C$-close to $g_1 x_0$, and $q \in [x_0, vx_0]$ that is $0.1C$-close to $h_1 x_0$. Inequality \ref{eqn:xuvGrom} also guarantees a point $Q \in [x_0, ux_0]$  that is $11C$-close to $vx_0$. Note that $[x_0, Q]$ and $[x_0, vx_0]$ are within Hausdorff distance $11C$ by Lemma \ref{lem:fellowTravel}. 

We now divide into two cases. First, if $d(x_0, Q) \ge d(x_0, p)$, then $p$ belongs to $[x_0, Q]$ and we deduce \[d(g_1x_0, [x_0, vx_0]) \le d(gx_0, p) + d_{Haus}([x_0, vx_0], [x_0, Q]) \le 0.1C + 11C \le 12C.
\] Of course, $d(h_1 x_0, [x_0, vx_0]) ]\le 0.1C$. We now apply Lemma \ref{lem:GromNestingPre} to conclude $g_1 = h_1$.

If $d(x_0, Q) \le d(x_0, p)$, $q \in [x_0, vx_0]$ is $11C$-close to a point in $[x_0, Q] \subseteq [x_0, p]$. Hence, $h_1 x_0$ is $12C$-close to $[x_0, p]$. Of course, $g_1 x_0$ is $0.1C$-close to $[x_0, p]$. Again, Lemma \ref{lem:GromNestingPre} implies that $g_1 = h_1$.

We can run this inductively to prove $g_i = h_{i}$ for all $i \le m$. This also concludes $n \ge m$.
\end{proof}

\begin{lem}\label{lem:GromNestDirect}
Let $g_1, g_2, \ldots, g_{n}, h_1, \ldots, h_{m} \in  K \cup \{\varphi_1, \varphi_2, \ldots \}$, let $u = \mathcal{F}(g_1, \ldots, g_n)$, $v = \mathcal{F}(h_1, \ldots, h_m )$ and suppose that $(x_0 | u x_0)_{vx_0} < 9C$. Then we have $n \ge m$ and $g_{i} = h_{i}$ for $i=1, \ldots, m$.
\end{lem}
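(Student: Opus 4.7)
The strategy mirrors the proof of Lemma \ref{lem:GromNesting} but bypasses its initial appeal to Gromov's $4$-point inequality, since our hypothesis $(x_0 | u x_0)_{v x_0} < 9C$ already supplies the conclusion of that step (Inequality \ref{eqn:xuvGrom}). Using the slim-triangle property of the $(\ln 2)$-hyperbolic space $\mathbb{H}^{d}$, the hypothesis produces a point $Q \in [x_0, u x_0]$ with $d(Q, v x_0) \le 9C + 6\ln 2 < 10C$, and Lemma \ref{lem:fellowTravel} places $[x_0, Q]$ within Hausdorff distance $10C$ of $[x_0, v x_0]$.

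For the base case $g_1 = h_1$, locate $p \in [x_0, u x_0]$ and $q \in [x_0, v x_0]$ that are $0.1C$-close to $g_1 x_0$ and $h_1 x_0$ via Lemma \ref{lem:extension}, split into the two cases $d(x_0, Q) \ge d(x_0, p)$ and $d(x_0, Q) < d(x_0, p)$, and apply Lemma \ref{lem:GromNestingPre}---this is precisely the second half of the proof of Lemma \ref{lem:GromNesting}.

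For the inductive step, suppose $g_i = h_i$ for $i \le j$ and set $w := g_1 a \cdots g_j a = h_1 a \cdots h_j a$. By Lemma \ref{lem:extension}, locate $p_{j+1} \in [x_0, u x_0]$ and $q_{j+1} \in [x_0, v x_0]$ that are $0.1C$-close to $w g_{j+1} x_0$ and $w h_{j+1} x_0$. The same case analysis using the \emph{same} $Q$ shows that $w g_{j+1} x_0$ and $w h_{j+1} x_0$ both lie within $10.2C$ of a common geodesic $[x_0, z]$, with $z = v x_0$ when $d(x_0, Q) \ge d(x_0, p_{j+1})$ and $z = p_{j+1}$ otherwise. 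Applying the isometry $w^{-1}$, the points $g_{j+1} x_0$ and $h_{j+1} x_0$ lie within $10.2C$ of the geodesic $[w^{-1} x_0, w^{-1} z]$; by the chain property (Fact \ref{fact:PropAConcat} and Lemma \ref{lem:extension}), this shifted geodesic passes within $0.01C$ of $x_0$, so Lemma \ref{lem:fellowTravel} forces its tail from $x_0$ onward to fellow-travel with $[x_0, w^{-1} z]$ within $0.1C$. Hence $g_{j+1} x_0$ and $h_{j+1} x_0$ lie within $11C$ of $[x_0, w^{-1} z]$, and Lemma \ref{lem:GromNestingPre} yields $g_{j+1} = h_{j+1}$. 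Finally, if $n < m$, then after $n$ iterations $v = u \cdot a \cdot h_{n+1} a \cdots h_m$, placing $u x_0$ on the chain for $v$ at distance at least $\|h_{n+1}\| \ge R_0$ from $v x_0$, so $(x_0 | u x_0)_{v x_0} \ge R_0 - O(C) \gg 9C$, contradicting the hypothesis; thus $n \ge m$.

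The main obstacle is the inductive step: Lemma \ref{lem:GromNestingPre} is tied to the basepoint $x_0$ through the norms $\|g\|$, while after shifting by $w^{-1}$ the natural geodesic begins at $w^{-1} x_0$. The chain structure guarantees that the shifted geodesic passes near $x_0$, so its sub-geodesic from $x_0$ inherits the fellow-travel property, and $g_{j+1} x_0, h_{j+1} x_0$ remain close to a geodesic emanating from $x_0$. Once this translation is justified, the remaining verification is routine.
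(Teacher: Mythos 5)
Your proof is correct but takes a different and considerably longer route than the paper's. The paper proves Lemma \ref{lem:GromNestDirect} as a two-line reduction to Lemma \ref{lem:GromNesting}: it first notes that $(x_0 \mid vh_m^{-1}x_0)_{vx_0} \ge \|h_m\| - 0.01C \ge 90C$ (since $vh_m^{-1}x_0$ lies within $0.01C$ of $[x_0, vx_0]$ by Lemma \ref{lem:extension}), and then applies Gromov's inequality (Lemma \ref{lem:Gromov}) to upgrade the hypothesis $(x_0 \mid ux_0)_{vx_0} < 9C$ to $(vh_m^{-1}x_0 \mid ux_0)_{vx_0} < 10C$, which is precisely the hypothesis of Lemma \ref{lem:GromNesting}. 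You instead re-run the internal argument of Lemma \ref{lem:GromNesting} from scratch, observing that your hypothesis directly supplies the intermediate inequality \eqref{eqn:xuvGrom} and therefore the initial Gromov step can be skipped. Both approaches are sound; yours is less modular but has the merit of actually spelling out the inductive step, which the paper's proof of Lemma \ref{lem:GromNesting} compresses to ``we can run this inductively.'' In particular, you correctly flag and handle the re-basing subtlety there: after translating by $w^{-1}$ the ambient geodesic begins at $w^{-1}x_0$ rather than $x_0$, and one needs Lemma \ref{lem:extension} (which gives that $[x_0, z]$ passes within $0.01C$ of $wx_0$) together with Lemma \ref{lem:fellowTravel} to see that $g_{j+1}x_0$ and $h_{j+1}x_0$ remain close to a geodesic emanating from $x_0$, which is what Lemma \ref{lem:GromNestingPre} requires. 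The trade-off is that the paper's reduction is shorter and reuses the previous lemma as a black box, while your argument is more self-contained and makes the inductive mechanism explicit.
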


\begin{proof}
Under the assumption, we have \[
(x_0 | vh_{m}^{-1} x_0)_{vx_0} = d(x_0, h_{m} x_0 ) - (x_0 | vx_0)_{vh_{m}^{-1}x_0} \ge 90C.
\]
Then Lemma \ref{lem:Gromov} implies that $(v h_m^{-1} x_0 | u x_0)_{vx_0} < 10C$. The remaining follows from Lemma \ref{lem:GromNesting}.
\end{proof}

We now construct Patterson--Sullivan measure. For each $s > \delta_{F}$, let\[
\mu_{x_0}^{s} := \frac{1}{\mathcal{P}_{F}(s)} \sum_{g \in F} e^{-s \|g\| } Dir_{gx_0},
\]
where $Dir_{gx_0}$ denotes the Dirac mass at $gx_0$. Note that $\{\mu_{x_0}^{s}\}$ is a probability measure on a compact set $\Gamma x_0 \cup \partial \mathbb{H}^{d}$. We then take a sequence $s_{n} \searrow \delta_{F}$ such that $\{\mu_{x_0}^{s_n}\}$ converges to a limit probability measure, denoted by $\mu$. We call it the Patterson-Sullivan measure for $F$. Recall that $F$ is of divergence type. This implies that $\mu_{x_0}^{s}(gx_0) \rightarrow 0$ as $s \searrow \delta_{F}$ for each $g \in F$. It follows that $\mu$ is supported on $\partial F \subseteq \partial \mathbb{H}^{d}$ only and $\mu(F \cdot x_0) = 0$.

The key property of the Patterson--Sullivan measure is the shadow principle. Given $y \in \mathbb{H}^{d}$ and $r>0$, Let us define \[
\mathcal{S}(y, r) = \{ z \in \mathbb{H}^{d} \cup \partial \mathbb{H}^{d} : (z | x_0)_{y} \le r\}.
\]
Given $\xi \in \partial \mathbb{H}^{d}$ and $r>0$, let us also define \[
B(\xi, r) := \{ \zeta \in \partial \mathbb{H}^{d} : \measuredangle \zeta x_0 \xi \le r\}.
\]

We now formulate the shadow principle.
\begin{prop}[Shadow principle]\label{prop:Shadow}
Let $\mu$ be the Patterson-Sullivan measure for $F$. Then for $K = e^{\delta_{F} \cdot 10^{7} C}$, we have \[
\frac{1}{K} e^{-\delta_{F} \|g\| } \le \mu \big(\mathcal{S}(gx_0, 8C) \big)\le e^{-\delta_{F} \|g\| }.
\]
for every $g \in F$.
\end{prop}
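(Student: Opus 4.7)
The plan is to estimate $\mu_{x_0}^s(\mathcal{S}(gx_0, 8C))$ for $s > \delta_F$ using the unique-factorization properties of $F$ (Lemmas~\ref{lem:GromNestingPre}--\ref{lem:GromNestDirect}), and then pass to the weak-$*$ limit. Fix any factorization $g = \mathcal{F}(g_1, \ldots, g_n)$ with $g_i \in K \cup \{\varphi_1, \varphi_2, \ldots\}$.

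For the lower bound, the key observation is that for any $h' \in F$ with factorization $h' = \mathcal{F}(h_1', \ldots, h_m')$, the concatenation $gah' = \mathcal{F}(g_1, \ldots, g_n, h_1', \ldots, h_m')$ lies in $F$, and Lemma~\ref{lem:extension} places a point of $[x_0, gah'x_0]$ within $0.01C$ of $gx_0$, so $(gah'x_0 \mid x_0)_{gx_0} < 8C$. Since $h' \mapsto gah'$ is injective on group elements and $\|gah'\| \le \|g\|+\|a\|+\|h'\|$ by the triangle inequality,
\[
\mu_{x_0}^s\big(\mathcal{S}(gx_0, 8C)\big) \ge \frac{1}{\mathcal{P}_F(s)}\sum_{h' \in F} e^{-s\|gah'\|} \ge e^{-s(\|g\|+\|a\|)}.
\]
As $\mathcal{S}(gx_0, 8C)$ is closed and $\|a\| = 1000C$, weak-$*$ convergence yields $\mu(\mathcal{S}(gx_0, 8C)) \ge e^{-1000\delta_F C}\,e^{-\delta_F\|g\|} \ge K^{-1}e^{-\delta_F\|g\|}$.

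For the upper bound, I would observe that if $hx_0 \in \mathcal{S}(gx_0, 8C)$ with $h \in F$, then $(x_0 \mid hx_0)_{gx_0} \le 8C < 9C$ triggers Lemma~\ref{lem:GromNestDirect}, forcing any factorization $h = \mathcal{F}(h_1, \ldots, h_N)$ to satisfy $N \ge n$ with $h_i = g_i$ for $i \le n$. So either $h = g$, or $h = gah'$ with $h' := \mathcal{F}(h_{n+1}, \ldots, h_N) \in F$, in which case Lemma~\ref{lem:extension} gives $\|h\| \ge \|g\|+\|a\|+\|h'\| - 0.04C$. Summing over such $h$,
\[
\sum_{h \in F,\; hx_0 \in \mathcal{S}(gx_0, 8C)} e^{-s\|h\|} \le e^{-s\|g\|} + e^{0.04sC - s\|a\|}\,e^{-s\|g\|}\,\mathcal{P}_F(s),
\]
and dividing by $\mathcal{P}_F(s)$ together with the divergence-type property $\mathcal{P}_F(s) \to \infty$ as $s \to \delta_F^+$ kills the first term; the remainder is comfortably below $e^{-\delta_F\|g\|}$.

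The main delicate point will be weak-$*$ upper semicontinuity, which goes the wrong direction for closed shadows: to transfer the bound on $\mu_{x_0}^s$ to a bound on $\mu$, I would dominate $\mu(\mathcal{S}(gx_0, 8C))$ above by $\liminf_s \mu_s(\mathcal{S}(gx_0, 8C+\epsilon))$ through an open enlargement with a small $\epsilon < C$, rerunning the factorization step at parameter $8C+\epsilon < 9C$ which is still within Lemma~\ref{lem:GromNestDirect}'s threshold. The generous constant $K = e^{10^7 \delta_F C}$ in the statement easily absorbs the triangle-inequality loss $e^{\delta_F\|a\|} = e^{1000\delta_F C}$, so the inequalities follow.
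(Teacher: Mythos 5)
Your proposal is correct and follows the same strategy as the paper: use the prefix-rigidity coming from Lemma~\ref{lem:GromNestDirect} and Lemma~\ref{lem:extension} to identify the $F$-orbit points in a shadow with a shifted copy of $F$, estimate $\mu_{x_0}^{s}$ term-by-term, and pass to the limit $s\searrow\delta_{F}$. You are, however, more careful than the paper on two points. First, in the upper bound the map $h\mapsto(ga)^{-1}h$ sends $h=g$ to $a^{-1}\notin F$, and the inequality $\|h\|\ge\|g\|+\|(ga)^{-1}h\|$ fails there; you split off the $h=g$ term and observe that its contribution $e^{-s\|g\|}/\mathcal{P}_{F}(s)$ vanishes as $s\searrow\delta_{F}$ by divergence type, which the paper leaves implicit. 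Second, you correctly note that the Portmanteau inequality for the closed shadow only gives $\mu(\mathcal{S})\ge\limsup_s\mu_{x_0}^{s}(\mathcal{S})$ (fine for the lower bound but the wrong direction for the upper one) and repair this by enlarging to an open shadow of radius $8C+\epsilon<9C$, which is still within the threshold of Lemma~\ref{lem:GromNestDirect}; the paper's ``insert $s=s_n$ and take the limit'' glosses over this. The constants you produce ($e^{-1000\delta_F C}$ from the triangle inequality loss $\|a\|=1000C$, and $e^{(0.04-1000)\delta_F C}<1$) fit within the stated $K=e^{10^7\delta_F C}$. In short, same route, with the two standard technicalities in Patterson--Sullivan-type shadow lemmas filled in.
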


\begin{proof}
Let us first establish the upper bound. Let \[
F' := \{ h \in F : (hx_0 | x_0)_{gx_0} < 8C\} = \{ h \in F : hx_0 \in \mathcal{S}(gx_0, 8C)\}.
\]
Lemma \ref{lem:GromNestDirect} tells us that the map $\Psi : h \mapsto (ga)^{-1} h$ is a one-to-one map from $F'$ into $F$. Furthermore, since $g$ and $(ga)^{-1} h$ both satisfy Property \ref{eqn:propA}, Lemma \ref{lem:extension} tells us that \[
\|h\| \ge \|g\| + \|(ga)^{-1} h\|.
\]

Now, for each $s > \delta_{F}$, note that \[
\begin{aligned}
\mathcal{P}_{F}(s)\cdot \mu_{x_0}^{s} \big(\mathcal{S}(gx_0, 8C) \big) &= \sum_{h \in F'} e^{-s \|h\|} \le \sum_{k \in \Psi (F')} e^{- s \|k\|} e^{-s \|g\|} \\
&\le e^{-s \|g\|} \sum_{k \in F} e^{-s\|k\|} \le \mathcal{P}_{F}(s)\cdot e^{-s \|g\|}.
\end{aligned}
\]
By inserting $s = s_{n}$ as in the construction of the Patterson--Sullivan measure and by taking the limit, we conclude the upper bound.

Next, note that the map $\Psi$ is in fact surjective. Indeed, for any $k \in F$ we have $ga k \in F'$ by Lemma \ref{lem:extension} and $\Psi(gak) = k$. Moreover, note that \[
\|h\| \le \|g\| + \|a\|+  \|(ga)^{-1} h\| \le \|g\| +  \|(ga)^{-1} h\| + 1000C.
\]
Hence, we have \[
\begin{aligned}
\mathcal{P}_{F}(s)\cdot \mu_{x_0}^{s} \big(\mathcal{S}(gx_0, 8C) \big) &= \sum_{h \in F'} e^{-s \|h\|} \ge \sum_{k \in \Psi (F')} e^{- s \|k\|} e^{-s (\|g\|+1000C)} \\
&= e^{-s (\|g\|+1000C)} \sum_{k \in F} e^{-s\|k\|} \le \mathcal{P}_{F}(s)\cdot e^{-s (\|g\|+1000C)}.
\end{aligned}
\]
By inserting $s = s_{n}$ as in the construction of the Patterson--Sullivan measure and by taking the limit, we conclude the lower bound.
\end{proof}

Our next goal is to prove that $\mu$ is fully supported on the conical limit set. When $F$ is replaced with discrete subgroups $G$ of $\Isom(\mathbb{H}^{d})$, this is part of the classical Hopf--Tsuji--Sullivan dichotomy. There, the Patterson--Sullivan measure for $G$ is fully supported on the conical limit set (non-conical limit set, resp.) if and only if $G$ is of divergence type (convergence type, resp.). For us, since $F$ is not a subgroup, we prove it from the scratch.

\begin{lem}\label{lem:conicalFull}
Let $\mu$ be the Patterson-Sullivan measure for $F$. Then we have \[
\mu \left( \left\{ \xi \in \Lambda F: \liminf_{t} d(\gamma(t), F x_0 ) > 3C\,\,\textrm{for}\,\, \gamma = [x_0, \xi) \right\} \right) = 0.
\]
\end{lem}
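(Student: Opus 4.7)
My plan is to combine the shadow principle (Proposition~\ref{prop:Shadow}) with the tree structure of $F$ established in Lemmas~\ref{lem:GromNestingPre}--\ref{lem:GromNestDirect}. Decompose the set in question as $\bigcup_{T>0}A_T$ where $A_T:=\{\xi\in\Lambda F:d(\gamma(t),Fx_0)>3C\text{ for all }t\ge T\}$; by countable subadditivity it suffices to show $\mu(A_T)=0$ for each fixed $T$.

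The key geometric observation is that $\xi\in\mathcal{S}(hx_0,8C)$ for $h=\mathcal{F}(g_1,\ldots,g_n)\in F$ with $n\ge 2$ already forces proximity to $Fx_0$. Letting $p:=\mathcal{F}(g_1,\ldots,g_{n-1})\in F$, Lemma~\ref{lem:extension} says $[x_0,hx_0]$ passes within $0.01C$ of $px_0$ at arc-length $\|p\|$; the Gromov bound $(\xi|hx_0)_{x_0}\ge\|h\|-8C$ combined with $(\ln 2)$-hyperbolicity (Lemma~\ref{lem:fellowTravel}) makes $[x_0,\xi)$ and $[x_0,hx_0]$ fellow-travel within $O(1)$ up to time $\|p\|$, so $d(\gamma_\xi(\|p\|),Fx_0)\le C<3C$. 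Hence $\xi\in A_T$ can lie in $\mathcal{S}(hx_0,8C)$ only when the preceding prefix $p$ satisfies $\|p\|<T$.

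It therefore suffices to show that $\mu$-a.e.\ $\xi$ lies in $\mathcal{S}(hx_0,8C)$ for infinitely many $h\in F$ with $\|h\|\to+\infty$. For this I would exploit the tree structure: Lemma~\ref{lem:GromNestingPre} shows that the shadows $\mathcal{S}(Lx_0,8C)$ for distinct letters $L\in K\cup\{\varphi_i\}$ are pairwise disjoint, and iterating Lemma~\ref{lem:GromNestDirect} organizes $F$-shadows into a nested cell system indexed by prefix chains. Since $\mu$ is supported on $\partial F$ and every $\xi\in\partial F$ is a visual accumulation point of $F\cdot x_0$, for $\mu$-a.e.\ $\xi$ one inductively extracts an infinite prefix chain $p_k\in F$ with $\xi\in\mathcal{S}(p_k x_0,8C)$ and $\|p_k\|\to+\infty$ (each added letter has norm $\gtrsim R_0$). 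Applying the previous observation to each $p_{k+1}$ then yields $\liminf_t d(\gamma_\xi(t),Fx_0)\le C<3C$, contradicting $\xi\in A_T$.

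The main technical obstacle is the measure-theoretic step of the prefix extraction: one must verify that $\mu$ assigns zero mass to the seams between distinct first-letter shadows so that the recursion is well-defined $\mu$-a.e. This can be handled by comparing $\mu(\mathcal{S}(Lx_0,8C+\epsilon))$ and $\mu(\mathcal{S}(Lx_0,8C-\epsilon))$ via the shadow principle at shifted radii, or alternatively through a Kochen--Stone Borel--Cantelli argument: $\sum_h\mu(\mathcal{S}(hx_0,8C))\ge K^{-1}\mathcal{P}_F(\delta_F)=+\infty$ together with the quasi-independence $\mu(V_n\cap V_m)\le C'\mu(V_n)\mu(V_m)$ (where $V_n:=\bigcup_{\|h\|\in[n,n+1)}\mathcal{S}(hx_0,8C)$) arising from Lemma~\ref{lem:GromNestDirect} gives $\mu(\limsup_n V_n)>0$, which extends to full measure by a zero-one law on the prefix filtration.
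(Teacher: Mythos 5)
Your geometric reduction is reasonable: you correctly observe that $\xi\in\mathcal{S}(hx_0,8C)$ with $h=\mathcal{F}(g_1,\ldots,g_n)$, $n\ge 2$, forces $[x_0,\xi)$ to pass within $O(C)$ of $\mathcal{F}(g_1,\ldots,g_{n-1})x_0\in Fx_0$, and hence it suffices to show that $\mu$-a.e.\ $\xi$ lies in $\mathcal{S}(hx_0,8C)$ for a sequence $h\in F$ with $\|h\|\to\infty$. The problem is that this is essentially a restatement of the lemma, and your argument for it is circular and incomplete. The sentence ``for $\mu$-a.e.\ $\xi$ one inductively extracts an infinite prefix chain $p_k\in F$ with $\xi\in\mathcal{S}(p_kx_0,8C)$'' is exactly what needs to be proved: $\xi\in\partial F$ is merely an accumulation point of $Fx_0$, and for a non-conical $\xi$ the inductive step simply fails --- given $\xi\in\mathcal{S}(p_kx_0,8C)$, there need not exist \emph{any} letter $L$ with $\xi\in\mathcal{S}(p_kaLx_0,8C)$, because the ray $[x_0,\xi)$ may leave every subsequent shadow and escape the quasi-tree. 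This has nothing to do with seams between sibling shadows, so your first proposed fix (shifted radii $8C\pm\epsilon$) does not touch the real issue.

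Your second proposed fix also does not close the gap. The Kochen--Stone inequality, assuming you can prove the claimed quasi-independence $\mu(V_n\cap V_m)\le C'\mu(V_n)\mu(V_m)$, only gives $\mu(\limsup_n V_n)\ge 1/C'>0$, not full measure. To upgrade to full measure you invoke ``a zero-one law on the prefix filtration,'' but no such law is available here: $\mu$ is a Patterson--Sullivan measure for the \emph{semigroup} $F$ and is not a $\Gamma$-conformal density; it carries no quasi-invariance under a group of transformations, so the usual ergodicity or Hopf-argument machinery that produces zero-one laws for group PS measures does not apply. The paper's proof supplies precisely the idea you are missing: it defines, for $\xi$ failing the conicality property, a \emph{maximal} $g_\xi\in F$ whose shadow still contains $\xi$, obtaining a countable partition into the sets $\{\xi:g_\xi=g\}$; then, using that left multiplication by $ha$ ($h\in F$) maps $\{\xi:g_\xi=g\}$ into $\{\xi:g_\xi=hag\}$ with a Jacobian-type lower bound $\ge e^{-\delta_F(\|h\|+O(C))}$ coming from the construction of $\mu$, and that these translates are pairwise disjoint as $h$ ranges over $F$, one gets a total mass $\gtrsim \mathcal{P}_F(\delta_F)=+\infty$ --- contradicting $\mu(\Lambda F)\le 1$. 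This direct Hopf--Tsuji--Sullivan-type divergence argument sidesteps both Borel--Cantelli and any zero-one law, and is the content you would need to supply to make your outline into a proof.
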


\begin{proof}
Given $g \in F$, we consider the following property for $\xi \in \partial \mathbb{H}^{d}$: \[
P_{g} := ``\begin{array}{c}\textrm{ there exists $\{z_n \}_{n>0} \subseteq Fx_0 $ such that}\\
               \textrm{$z_{n} \rightarrow \xi$ and $(gx_0 | z_n)_{gax_0} < 2C$.} \end{array}"
\]
Now for $\xi \in \partial \mathbb{H}^{d}$, if it satisfies $P_{g}$ for infinitely many $g \in F $, then clearly $d([x_0, \xi), gx_0) \le 2C$ for infinitely many such $g$'s. Conversely, if $ \liminf_{t} d(\gamma(t), F x_0 ) > 3C$ for $\gamma = [x_0, \xi)$, then $\xi$ satisfies $P_{g}$ for only finitely many $g \in F$. Furthermore, Lemma \ref{lem:GromNestDirect} tells us that, if $\xi$ satisfies $P_{g}$ and $P_{h}$ for two distinct $g, h \in F$, then one is an initial section of the other. In particular, among $g$'s for which $P_{g}(\xi)$ holds, there exists a unique $g$ with maximal $\|g\|$. We call it $g_{\xi}$. Hence, we have a countable measurable partition \[
\left\{ \xi \in \Lambda F : \liminf_{t} d(\gamma(t), F x_0 ) > 2C\,\,\textrm{for}\,\, \gamma = [x_0, \xi) \right\}  \subseteq  \sqcup_{g \in F} \{ \xi : g_{\xi} = g\} \sqcup \{\xi : \not\exists g [P_{g}(\xi)] \}.
\]

Now suppose to the contrary that $\mu$ charges positive weight on the set on the left. Then either $\{\xi : g_{\xi} = g\}$ has positive $\mu$-weight for some $g$, or $\{\xi : \not\exists g [P_{g}(\xi)]\}$ gets positive $\mu$-weight.

Case I) $\{\xi : g_{\xi} = g\}$ has positive $\mu$-weight for some $g \in F$.

Let $h \in F$ be an arbitrary element. Then $\{\xi : g_{\xi} = g\}$ and $\{\xi : g_{\xi} = ha g\}$ are clearly distinct. We now claim: \begin{claim}
\[
\mu (\{\xi : g_{\xi} = ha g\}) \ge e^{-10^{4}\delta_{F} C} \cdot e^{- \delta_{F} \|h\|} \cdot \mu (\{\xi : g_{\xi} = g\}).
\]
\end{claim} 
We use the multiplication map by $ha$: this maps open neighborhoods of $\{\xi : g_{\xi} = g\}$ to open neighborhoods of $\{\xi : g_{\xi} = hag\}$, and vice versa. Hence, if we prove that $\mu(ha O) \ge e^{-10^{4}\delta_{F} C} \cdot e^{- \delta_{F} \|h\|} \mu (O)$ for every open neighborhood $O \subseteq \mathbb{H}^{d}\cup \partial \mathbb{H}^{d}$ of $\{\xi : g_{\xi} = g\}$, then the result follows from outer regularity of $\mu$. This is easily checked by \[
\sum_{k\in F: kx_0 \in O} e^{-s \| ha \cdot k \|} \ge \sum_{k\in F: kx_0 \in O} e^{-s \| h\| + s\|a\| + s\|  k \|} \ge e^{-s \|h\|}  e^{-1000C s}\sum_{k\in F: kx_0 \in O} e^{-s \|k\|}
\]
for each  $s > \delta_{F}$.

Furthermore, clearly $\{\xi : g_{\xi} = ha g\}$ are disjoint for distinct $h$'s. Hence, we have \[
\begin{aligned}
\mu(\Lambda F) &\ge \sum_{h \in F} \mu (\{\xi : g_{\xi} = ha g\})  \ge e^{-10^{4}\delta_{F} C} \sum_{h \in F} e^{- \delta_{F} \|h\|} \cdot \mu (\{\xi : g_{\xi} = g\}) \\
&\ge +\infty \cdot \mu (\{\xi : g_{\xi} = g\}) = +\infty.
\end{aligned}
\]
This is a contradiction.

Case II) $\{\xi \in \Lambda F: \not\exists g [P_{g}(\xi)] \}$ has positive $\mu$-weight.

In this case, fix an arbitrary $h \in F$. Now for $\zeta \in \{\xi \in \Lambda F: \not\exists g [P_{g}(\xi)] \}$, pick $\{z_n\}_{n >0} \subseteq F x_0$ that tends to $\zeta$. Note that $\{ ha z_n \}_{n>0}$ is a sequence tending to $ha \zeta$. Furthermore, $(hx_0 | z_n)_{hax_0} < 2C$ for each $n$ by Lemma \ref{lem:extension}. In summary, $ha\zeta$ satisfies $P_{h}$. Meanwhile, if $ha \zeta$ also satisfies $P_{h'}$ for some $h' \in F$ with $\|h'\| > \|h\|$, then $h$ is an initial section of $h'$ and $\zeta$ satisfies $P_{(ha)^{-1} h'}$. This contradicts the nature of $\zeta$. Hence, $h$ satisfies the maximality condition and $g_{ha\zeta} = h$.

In summary, we have $\{ \xi : g_{\xi} = h\} \supseteq ha \{\xi \in \Lambda F: \not\exists g [P_{g}(\xi)] \}$. As we argued in the previous case, the $\mu$-value of these two sets differ by at most $e^{\delta_{F} (\|h\| + 10^4 C)}$ factor. Hence, $ \{ \xi : g_{\xi} = h\}$ has positive $\mu$-value, and we are reduced to Case I.

Considering these contradictions, we conclude the desired statement.
\end{proof}

Let us now fix an arbitrary $\varphi \in K \cup \{\varphi_1, \varphi_2, \ldots\}$. Our next claim is:

\begin{lem}\label{lem:conicalPhiFull}
Let $\mu$ be the Patterson-Sullivan measure for $F$. Then for $\mu$-a.e. boundary point $\xi \in \Lambda F$, $\xi$ satisfies $P_{g a\varphi}$ for infinitely many $g \in F$. 
\end{lem}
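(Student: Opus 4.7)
The plan is to imitate the argument of Lemma \ref{lem:conicalFull}, with $P_g$ replaced by $P_{ga\varphi}$. The key setup is to associate to each $\xi \in \Lambda F$ its $\mathcal{F}$-word: the (possibly infinite) sequence of letters $(k_1, k_2, \ldots)$ from $K \cup \{\varphi_1, \varphi_2, \ldots\}$ such that the points $\mathcal{F}(k_1, \ldots, k_j) x_0$ lie within $0.1C$ of the ray $[x_0, \xi)$. This is the sequence indexing $T(\xi) := \{g \in F : P_g(\xi) \text{ holds}\}$ from Lemma \ref{lem:conicalFull}, with uniqueness supplied by Lemma \ref{lem:GromNestDirect}. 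Under this coding, $T_\varphi(\xi) := \{g \in F : P_{ga\varphi}(\xi) \text{ holds}\}$ is in bijection, via $g \mapsto ga\varphi$, with those prefixes of $\xi$'s $\mathcal{F}$-word whose last letter is $\varphi$. In particular $T_\varphi(\xi)$ is totally ordered by prefix length and admits a unique maximum $g_\xi^\varphi \in F$ when it is finite and nonempty (otherwise set $g_\xi^\varphi := \bot$). Argue by contradiction: assume $A := \{\xi \in \Lambda F : T_\varphi(\xi) \text{ is finite}\}$ satisfies $\mu(A) > 0$ and partition $A = X_\bot \sqcup \bigsqcup_{g_0 \in F} X_{g_0}$ by the value of $g_\xi^\varphi$; some piece has positive $\mu$-measure.

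Case I: $\mu(X_{g_0}) > 0$ for some $g_0 \in F$. For each $h \in F$ and $\xi \in X_{g_0}$, the $\mathcal{F}$-word of $\eta := ha\xi$ is the concatenation of $h$'s $\mathcal{F}$-word with $\xi$'s $\mathcal{F}$-word. Since $\xi$'s word has its last $\varphi$ at the position of $g_0 a\varphi$ (by $\xi \in X_{g_0}$), the rightmost $\varphi$-ending prefix of $\eta$'s word corresponds to $hag_0 a\varphi$, yielding $g_\eta^\varphi = hag_0$ and hence $ha \cdot X_{g_0} \subseteq X_{hag_0}$. Combined with the translation estimate $\mu(haO) \ge e^{-10^4 \delta_F C} e^{-\delta_F \|h\|} \mu(O)$ from Lemma \ref{lem:conicalFull}'s Case I and outer regularity of $\mu$, this gives $\mu(X_{hag_0}) \ge e^{-10^4 \delta_F C} e^{-\delta_F \|h\|} \mu(X_{g_0})$. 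Since the sets $X_{hag_0}$ are pairwise disjoint, summing over $h \in F$ and invoking $\mathcal{P}_F(\delta_F) = +\infty$ contradicts $\mu(\Lambda F) < +\infty$.

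Case II: $\mu(X_\bot) > 0$. Fix any $h' \in F$ and $\zeta \in X_\bot$. Then $\eta := h' a \varphi a \zeta$ has $\mathcal{F}$-word $(h_1', \ldots, h_p', \varphi, k_1^\zeta, k_2^\zeta, \ldots)$, and no $k_i^\zeta$ equals $\varphi$ by the definition of $X_\bot$. Hence the rightmost $\varphi$-ending prefix of $\eta$'s word sits at position $p+1$, so $g_\eta^\varphi = h'$ and $h' a \varphi a \cdot X_\bot \subseteq X_{h'}$. The translation estimate then forces $\mu(X_{h'}) > 0$, returning us to Case I.

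I expect the main technical obstacle to be rigorously justifying the $\mathcal{F}$-word coding — locating the points $\mathcal{F}(k_1, \ldots, k_j) x_0$ on $[x_0, \xi)$ via Lemma \ref{lem:extension} and ruling out inconsistent alternative letter choices via Lemma \ref{lem:GromNestDirect} — after which the disjoint-translate argument of Lemma \ref{lem:conicalFull} adapts essentially verbatim.
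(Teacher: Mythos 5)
Your proposal is correct and follows essentially the same two-case translation argument as the paper's (very terse) proof: define $g_\xi^\varphi$ as the maximal $g\in F$ with $P_{ga\varphi}(\xi)$, partition by its value, rule out $\{g_\xi^\varphi=g_0\}$ having positive mass via the translation estimate $\mu(haO)\ge e^{-10^4\delta_F C}e^{-\delta_F\|h\|}\mu(O)$ summed over $h\in F$, and reduce the residual piece to Case I by a single translation. One small improvement in your write-up worth noting: the paper's partition lists the residual set as $\{\xi:\nexists g\,[P_g(\xi)]\}$, which misses the $\xi$'s where some $P_g$ holds but no $P_{ga\varphi}$ holds; your $X_\bot$ (defined as $T_\varphi(\xi)=\emptyset$) is the correct residual piece, and your translation by $h'a\varphi a$ is exactly the adaptation needed. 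Your Case II conclusion ``$g_\eta^\varphi = h'$'' is not quite right if $k_1^\zeta=\varphi$ (then $g_\eta^\varphi=h'a\varphi$), but the only thing you need — that $h'a\varphi a\cdot X_\bot\subseteq\bigsqcup_{g\in F}X_g$, forcing some $X_g$ to carry positive mass — still holds, so the argument is unaffected. The $\mathcal F$-word coding is a clean way of packaging what the paper does implicitly through Lemma \ref{lem:GromNestDirect}, and is justified for $\mu$-a.e.\ $\xi$ via Lemma \ref{lem:conicalFull} (conicality) together with the uniqueness supplied by Lemma \ref{lem:GromNestDirect}.
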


\begin{proof}
The proof is similar to the previous one. If a given $\xi \in \Lambda F$ satisfies $P_{ga \varphi}$ for only finitely many $g \in F$, there exists a unique such $g$ with maximal $\|g\|$. We call it $g_{\xi}$. We have a countable measurable partition \[
\left\{ \xi \in \Lambda F : \textrm{$P_{ga \varphi}(\xi)$ holds for only finitely many $g \in F$}\right\}  \subseteq  \sqcup_{g \in F} \{ \xi : g_{\xi} = g\} \sqcup \{\xi : \not\exists g [P_{g}(\xi)] \}.
\]
As in Case I of the proof of Lemma \ref{lem:conicalFull}, we can observe that $\{\xi : g_{\xi} = g\}$ is $\mu$-null for each $g \in F$. Moreover, as in Case II of the proof of Lemma \ref{lem:conicalFull}, we can observe that $\{\xi : \nexists g [P_{g}(\xi)]\}$ is also $\mu$-null. 
\end{proof}

In particular, some translate of $[x_0, \varphi x_0 ]$ is contained in the $3C$-neighborhood of $[x_0, \xi)$. Moreover, recall that $[x_0, \mathfrak{g}_{k} x_0]$ is contained in the $10C$-neighborhood of $[x_0, \varphi_{k} x_0]$ for each $k$. We conclude that:

\begin{cor}\label{cor:MyrbergGeneric}
Let $\mu$ be the Patterson-Sullivan measure for $F$. Then $\mu$-a.e. limit points are $\Gamma$-Myrberg limit points. 

More specifically, for $\mu$-a.e. $\xi \in \partial F$, for each $g \in \Gamma$, there exists $h \in \Gamma$ such that the $13C$-neighborhood of $[x_0, \xi)$ contains $h [x_0, gx_0]$.
\end{cor}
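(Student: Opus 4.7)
The plan is to obtain the corollary as a direct countable-intersection consequence of Lemma \ref{lem:conicalPhiFull}. Applying that lemma to each choice $\varphi = \varphi_k$ for $k \in \mathbb{N}$ yields a $\mu$-full-measure set $E_k \subseteq \Lambda F$ on which the property $P_{g a \varphi_k}$ holds for infinitely many $g \in F$. Setting $E := \bigcap_{k} E_k$, this remains $\mu$-full-measure, and for every $\xi \in E$ and every $k$ there exists some $g \in F$ with $P_{g a \varphi_k}(\xi)$.

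The key geometric step is to upgrade the Gromov-product condition in $P_{g a \varphi_k}(\xi)$ to the assertion that the $(ga)$-translate of $[x_0, \varphi_k x_0]$ lies within $3C$ of $[x_0, \xi)$. To do this, I take the sequence $\{z_n\} \subseteq F x_0$ provided by $P_{g a \varphi_k}(\xi)$ and consider the ordered tuple $(x_0, g x_0, g a x_0, g a \varphi_k x_0, g a \varphi_k a x_0, z_n)$. The Gromov-product bound at $g a \varphi_k a x_0$ is the hypothesis; the bounds at the remaining interior vertices follow from Property \ref{eqn:propA} via Lemma \ref{lem:chain} applied to the $\mathcal{F}$-decompositions of $g$ and $g a \varphi_k$. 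All consecutive distances exceed $\|a\| = 1000C$, so the tuple is an admissible chain and Lemma \ref{lem:chain} places points on $[x_0, z_n]$ uniformly close to each interior vertex, in particular to $g a x_0$ and $g a \varphi_k x_0$. Letting $n \to \infty$ and invoking Lemma \ref{lem:fellowTravel} transfers this to $[x_0, \xi)$, yielding the containment $g a \cdot [x_0, \varphi_k x_0] \subseteq \mathcal{N}_{3C}([x_0, \xi))$.

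The final step uses the construction $\varphi_k = \Phi(\phi_k) \cdot a \cdot \Phi(\mathfrak{g}_k)$ with $\Phi(\mathfrak{g}_k) = b_1 \mathfrak{g}_k b_2$ for some $b_1, b_2 \in \{\mathrm{id}, b\}$, whose norms are at most $C$. Lemma \ref{lem:extension} together with Lemma \ref{lem:fellowTravel} shows that the terminal portion of $[x_0, \varphi_k x_0]$ fellow-travels, within $10C$, the $(\Phi(\phi_k) a b_1)$-translate of $[x_0, \mathfrak{g}_k x_0]$. Composing this translation with the $(ga)$-translation from the previous step, the element $h := g a \Phi(\phi_k) a b_1 \in \Gamma$ satisfies $h \cdot [x_0, \mathfrak{g}_k x_0] \subseteq \mathcal{N}_{13C}([x_0, \xi))$. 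Since $\{\mathfrak{g}_k\}_k$ enumerates $\Gamma$, this verifies the Myrberg property in the quantitative form stated, and the corollary follows on the $\mu$-conull set $E$. The main obstacle is the middle paragraph: ensuring the six-point tuple is a genuine admissible chain with the right constants, and that the fellow-traveling persists uniformly through the limit $z_n \to \xi$ so as to control $[x_0, \xi)$ itself rather than merely the finite geodesics $[x_0, z_n]$.
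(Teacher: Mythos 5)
Your proposal is correct and follows essentially the same route the paper takes: the paper's own proof consists of the two sentences preceding the statement, which invoke Lemma \ref{lem:conicalPhiFull} for each $\varphi_k$ (with the countable intersection left implicit), upgrade the Gromov-product condition in $P_{ga\varphi_k}$ to $3C$-fellow-travelling of $ga[x_0,\varphi_k x_0]$ along $[x_0,\xi)$, and then use that a translate of $[x_0,\mathfrak{g}_k x_0]$ sits in the $10C$-neighborhood of $[x_0,\varphi_k x_0]$ by the construction $\varphi_k = \Phi(\phi_k)\,a\,\Phi(\mathfrak{g}_k)$. You have simply made explicit the chain/Lemma \ref{lem:extension} bookkeeping and the witness $h = ga\Phi(\phi_k)ab_1$ that the paper leaves unstated.
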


We lastly investigate the sublinearly conical limit set. Given $\eta>0$, let us define: \[
\Lambda_{\eta} F := \left\{ \xi \in \Lambda_{c} F : \begin{array}{c}\textrm{$\exists T$ such that, for $\gamma = [x_0, \xi)$ and for each $t >T$,}\\
\textrm{ $\gamma([t, t+\eta t])$ is $C$-close to $Fx_0$}\end{array}\right\}.
\]
Every element $\xi$ of $\Lambda_\eta F$ satisfies that $\liminf_{t} d(\gamma(t), Fx_0) / t \le \eta$. 

We also define \[
\mathscr{S}_{\eta, R} := \big\{ \mathcal{S}(ga hx_0, 8C) : g, h \in F, \|h\| > \eta \|g\|, R \le  \|g\| \le R+1\big\} \quad (R=1, 2, \ldots)
\]
and define $\mathscr{S}_{\eta} := \cup_{R>0} \mathscr{S}_{\eta, R}$. Observe that:

\begin{lem}\label{lem:sublinearCon}
Let $\xi \in \Lambda_{c} F \setminus \Lambda_{\eta} F$. Then $\xi$ belongs to infinitely many shadows $\mathcal{S} \in \mathscr{S}_{\eta}$.
\end{lem}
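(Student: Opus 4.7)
My plan is to use the hypothesis $\xi \in \Lambda_c F \setminus \Lambda_\eta F$ to extract an infinite sequence of pairs of consecutive visits of $\gamma = [x_0, \xi)$ to $Fx_0$ whose spacings exceed $\eta$ times the visit time, and then to convert each such pair, via the prefix/factorization structure of $\mathcal{F}$-words established in Lemmas \ref{lem:GromNesting} and \ref{lem:GromNestDirect}, into a shadow in $\mathscr{S}_\eta$ containing $\xi$. Since $\xi \in \Lambda_c F$, I list visit times $s_1 < s_2 < \cdots \to \infty$ together with $g_i \in F$ satisfying $d(\gamma(s_i), g_i x_0) \le C$, so $\|g_i\| \in [s_i - C, s_i + C]$, and the ray stays more than $C$-far from $Fx_0$ strictly between consecutive visits. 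The failure $\xi \notin \Lambda_\eta F$ yields an infinite index set $I$ such that for each $i \in I$ the gap $(s_i, s_{i+1})$ contains some bad interval $[t, t+\eta t]$, forcing $s_{i+1} - s_i \ge \eta t \ge \eta s_i$.

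Fix $i \in I$ and set $g := g_i$, $g_* := g_{i+1}$. A direct Gromov-product computation, using that $gx_0$ and $g_* x_0$ lie $C$-close to the collinear ray points $\gamma(s_i), \gamma(s_{i+1})$, gives $(x_0 | g_* x_0)_{g x_0} \le 2C < 9C$. Writing $g = \mathcal{F}(k_1, \ldots, k_m)$ and $g_* = \mathcal{F}(h_1, \ldots, h_n)$, Lemma \ref{lem:GromNestDirect} (applied with $u = g_*$ and $v = g$) forces $n \ge m$ and $k_j = h_j$ for $j \le m$. Furthermore $n > m$, because for large $i$ the gap $s_{i+1} - s_i > 2C$ forces $g x_0 \ne g_* x_0$ and hence distinct $\mathcal{F}$-factorizations. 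Thus $g_* = g \cdot a \cdot h$ with $h := \mathcal{F}(h_{m+1}, \ldots, h_n) \in F$.

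Lemma \ref{lem:extension} gives $\|g_*\| \ge \|g\| + \|h\|$, and the triangle inequality gives $\|g_*\| \le \|g\| + 1000C + \|h\|$; combining these with the bounds on $\|g\|, \|g_*\|$ in terms of $s_i, s_{i+1}$ yields $\|h\| \ge s_{i+1} - s_i - O(C) \ge \eta s_i - O(C) \ge \eta \|g\| - O(C)$. Exploiting the strict slack $s_{i+1} - s_i - \eta s_i > 0$ (which holds whenever the bad $t$ is strictly later than $s_i$), or else a tail argument through any $\eta' < \eta$, this upgrades to the strict inequality $\|h\| > \eta \|g\|$ for all sufficiently large $i \in I$. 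Meanwhile, since $\gamma(s_{i+1})$ lies on $[x_0, \xi)$ within $C$ of $g a h x_0 = g_* x_0$, a quick limiting computation gives $(\xi | x_0)_{g a h x_0} \le C$, so $\xi \in \mathcal{S}(g a h x_0, 8C)$. Taking $R := \lfloor \|g\| \rfloor$ places this shadow in $\mathscr{S}_{\eta, R} \subseteq \mathscr{S}_\eta$, and distinctness across $i \in I$ follows from $\|g_i\| \to \infty$. The main obstacle is the additive $O(C)$ slack in the norm bound: without invoking the positive margin in the bad interval (or a subsequence argument), the natural estimate only produces $\|h\| \ge \eta \|g\| - O(C)$, so securing the strict inequality required by $\mathscr{S}_\eta$ is where extra care is needed.
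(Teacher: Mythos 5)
Your proof follows essentially the same route as the paper's. The one genuine difference is that you derive the nested prefix structure of the visit elements $g_i$ explicitly: the estimate $(x_0 \,|\, g_{i+1}x_0)_{g_i x_0} \le 2C$ together with Lemma~\ref{lem:GromNestDirect} forces $g_{i+1} = g_i\, a\, h$ with $h \in F$, whereas the paper simply posits a converging sequence of prefixes $h_1 a\cdots a h_i x_0 \to \xi$ as a consequence of conicality. This is a worthwhile clarification of a step the paper leaves implicit. From there both proofs identify gaps with $s_{i+1} - s_i > \eta s_i$ infinitely often and convert each into a shadow $\mathcal{S}(gah x_0, 8C)$ containing $\xi$.

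The $O(C)$ slack you flag at the end is a genuine issue, and you are right to call it out. From $\|g\| \le s_i + C$ and $\|h\| \ge s_{i+1} - s_i - 1002C$, the gap bound $s_{i+1} - s_i > \eta s_i$ only yields $\|h\| > \eta\|g\| - O(C)$, which need not exceed $\eta\|g\|$ when the bad interval starts just barely after $s_i$. Your first suggested fix (``exploit the strict slack $s_{i+1}-s_i-\eta s_i > 0$'') does not close the gap by itself, since that slack can be arbitrarily small. The second fix is the right one and you should commit to it: prove instead that $\xi\in\Lambda_c F\setminus\Lambda_\eta F$ lies in infinitely many shadows of $\mathscr{S}_{\eta'}$ for a fixed $\eta'<\eta$ (say $\eta'=\eta/2$), which does follow for large $i$ since then $\|h\| > \eta s_i - O(C) > \tfrac{\eta}{2}(s_i + C) \ge \tfrac{\eta}{2}\|g\|$. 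This restatement is harmless for the downstream Borel--Cantelli argument and for Theorem~\ref{thm:main}, since $\eta>0$ is arbitrary there. The paper's own proof handles the same slack by strengthening the bad-index condition to $\frac{t_{i+1}-t_i+2C}{t_i-2C} > 2\eta$ before concluding membership in $\mathscr{S}_\eta$, which is the same idea in a slightly different form; so your diagnosis is on target, and the proposal is essentially correct once the $\eta'$-tail argument is carried through rather than merely suggested.
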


\begin{proof}
Since $\xi \in \Lambda_{c} F$, there exists an infinite sequence $h_1, h_2, \ldots \in K \cup \{\varphi_1, \varphi_2, \ldots\}$ such that $h_1 a \cdots h_{i-1} a h_{i}x_0$ converges to $\xi$. Furthermore, there exist points $p_{i} = \gamma(t_{i})  \in [x_0, \xi)$ that are $C$-close to $h_1 a \cdots h_{i-1} a h_{i} x_0$, respectively. Here, $\{t_{i}\}_{i>0}$ is an increasing sequence that tends to infinity.

Since $\xi \notin \Lambda_{\eta} F$, there must exist infinitely many $i$'s for which \[
\frac{t_{i+1} - t_{i} + 2C}{t_{i} - 2C} > 2\eta.
\]
For such $i$'s, $\xi$ belongs to $\mathcal{S}( h_1 a \cdots h_{i} a h_{i+1} x_0, 8C)$, where $\|h_{i+1}\| > (t_{i+1} - t_{i}) - 2C > 4\eta (t_{i} + 2C) > 4\eta \|h_1 a \cdots h_{i} \|$ if $i$ is sufficiently large. Such shadows belong to $\mathscr{S}_{\eta}$.
\end{proof}

Let us now estimate the number and the $\mu$-sizes of shadows $\mathcal{S}(ga hx_0, 8C)$ in $\mathscr{S}_{\eta, R}$, for large enough $R$. There are at most $\operatorname{exp}\big(\delta_{F}(1+ 0.001\eta) R\big)$-many candidates for $g$. For $h$, we can choose $\varphi_{k}, \varphi_{k+1}, \ldots$ where $k$ is the smallest index such that $\|\varphi_{k}\| \ge \eta R$. Recall that $\|\varphi_{k}\|, \|\varphi_{k+1}\|$, $\ldots$ grow exponentially by the factor of $10$, and in particular $\|\varphi_{k+l}\| \ge 10^{l} \eta R$. Now, given such $g$ and $h=R_{k+l}$ $(l\ge 0)$, Proposition \ref{prop:Shadow} tells us that \[
\mu\big(\mathcal{S}(gah x_0, 8C) \big) \le e^{-\delta_{F} \|g a h\|} \le e^{-\delta{F} (R + \eta R_{k+l})} \le e^{-\delta_{F}R(1+ \eta \cdot  10^{l})}. 
\]
It follows that \[\begin{aligned}
\sum_{\mathcal{S} \in \mathscr{S}_{\eta, R}} \mu(\mathcal{S}) &\le \sum_{l = 0}^{\infty} \operatorname{exp}\big(\delta_{F}(1+ 0.001\eta) R\big) \cdot \operatorname{exp}\big(-\delta_{F}R (1 + \eta \cdot 10^{l})) \\
&\le \sum_{l=0}^{\infty} \operatorname{exp}\big(-0.5 \delta_{F} \eta R \cdot 10^{l}) \le 1.1 \operatorname{exp}\big(-0.5 \delta_{F} \eta R).
\end{aligned}
\]
This is summable over $R$, and $\sum_{\mathcal{S} \in \mathscr{S}_{\eta}} \mu(\mathcal{S})$ is finite. By the Borel-Cantelli Lemma, $\mu$-a.e. limit point $\xi$ is not contained in $\mathcal{S} \in \mathscr{S}_{\eta}$ infinitely often. In other words, $\liminf_{t} d(\gamma(t), F x_0) / t > \eta$ for $\gamma = [x_0, \xi)$ for such $\xi$. Since $\eta$ is arbitrary, we conclude that $\mu$-a.e. limit points are sublinearly conical.

Given the shadow lemma and the $\mu$-genericity of sublinearly conical limit set, we can now adopt Nicholls' argument in \cite[Lemma 9.3.4]{nicholls1989the-ergodic}. For completeness, we include the proof below.

\begin{lem}
Let $A \subseteq \Lambda F$ be a subset with positive $\mu$-value. Then we have $\Hdim(A) \ge \delta_{F}$.
\end{lem}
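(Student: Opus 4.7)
The plan is to apply a Frostman-type mass distribution argument, combining the shadow principle (Proposition~\ref{prop:Shadow}) with the $\mu$-a.e. conical approximability from Lemma~\ref{lem:conicalFull}. Concretely, I would first establish that for $\mu$-almost every $\xi \in A$,
\[
\liminf_{r \to 0^{+}} \frac{\log \mu(B(\xi, r))}{\log r} \ge \delta_{F},
\]
and then convert this pointwise local-dimension bound into the desired Hausdorff dimension bound via a standard covering argument.

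For the pointwise estimate, Lemma~\ref{lem:conicalFull} gives that for $\mu$-a.e. $\xi$ the ray $\gamma = [x_{0}, \xi)$ passes within distance $3C$ of $Fx_{0}$ infinitely often; this yields elements $g_{n} \in F$ and times $t_{n} \to \infty$ with $d(\gamma(t_{n}), g_{n}x_{0}) \le 3C$, so that $t_{n}$ and $\|g_{n}\|$ differ by at most $3C$. Using the standard comparison between the visual metric on $\partial \mathbb{H}^{d}$ and Gromov products at $x_{0}$, together with $(\ln 2)$-hyperbolicity (Lemma~\ref{lem:Gromov}), I would check that any boundary point $\eta$ with $d_{vis}(\xi, \eta) \le e^{-t_{n}}$ satisfies $(\xi | \eta)_{x_{0}} \ge t_{n} - O(1)$, so that $[x_{0}, \eta)$ fellow-travels $\gamma$ past $\gamma(t_{n})$ and hence $\eta \in \mathcal{S}(g_{n}x_{0}, 8C)$. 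The shadow principle then gives
\[
\mu\big(B(\xi, e^{-t_{n}})\big) \le \mu\big(\mathcal{S}(g_{n}x_{0}, 8C)\big) \le e^{-\delta_{F} \|g_{n}\|} \le e^{3\delta_{F} C} \cdot \big(e^{-t_{n}}\big)^{\delta_{F}},
\]
which yields the claimed $\liminf$ along the sequence $r = e^{-t_{n}}$.

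To convert the pointwise bound into a dimension lower bound, fix $\epsilon > 0$ and set
\[
A_{\epsilon, N} := \Big\{ \xi \in A : \mu\big(B(\xi, r)\big) \le r^{\delta_{F} - \epsilon}\,\,\text{for all}\,\, r \le 1/N \Big\}.
\]
The local-dimension estimate ensures $\mu\big(A \setminus \bigcup_{N} A_{\epsilon, N}\big) = 0$, so $\mu(A_{\epsilon, N}) > 0$ for some $N$. For any cover $\{U_{i}\}$ of $A_{\epsilon, N}$ with $\sup_{i} \diam U_{i} < 1/N$, picking $\xi_{i} \in U_{i} \cap A_{\epsilon, N}$ whenever the intersection is nonempty gives $U_{i} \subseteq B(\xi_{i}, \diam U_{i})$ and hence $\mu(U_{i}) \le (\diam U_{i})^{\delta_{F} - \epsilon}$. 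Summing yields $\sum_{i} (\diam U_{i})^{\delta_{F} - \epsilon} \ge \mu(A_{\epsilon, N}) > 0$, so $H^{\delta_{F} - \epsilon}(A) > 0$ and $\Hdim(A) \ge \delta_{F} - \epsilon$; letting $\epsilon \to 0$ completes the proof.

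The main obstacle is the geometric step relating visual balls at $\xi$ to shadows at the nearby orbit point $g_{n}x_{0}$; this requires careful bookkeeping of the constants linking angles at $x_{0}$, Gromov products at the basepoint, and Gromov products at the interior point $g_{n}x_{0}$. Everything after that is a routine Frostman-type deduction drawing only on Proposition~\ref{prop:Shadow} and Lemma~\ref{lem:conicalFull}, both of which have already been established.
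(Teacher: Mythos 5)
There is a genuine gap in the first step, and it is precisely the issue that the paper's proof was designed to circumvent. Lemma~\ref{lem:conicalFull} only supplies, for $\mu$-a.e.\ $\xi$, a sequence of times $t_n \to \infty$ at which $\gamma(t_n)$ comes within $3C$ of $Fx_0$, and therefore only controls $\mu\big(B(\xi, e^{-t_n})\big)$ along that sequence. That gives
\[
\liminf_{n\to\infty}\frac{\log \mu\big(B(\xi, e^{-t_n})\big)}{\log e^{-t_n}} \ge \delta_F,
\]
which is a statement about a subsequence of radii, not the genuine $\liminf_{r\to 0^+}$ that your Frostman argument needs. In your deduction you define
\[
A_{\epsilon,N} := \Big\{\xi\in A : \mu\big(B(\xi,r)\big)\le r^{\delta_F-\epsilon}\ \text{for all } r\le 1/N\Big\}
\]
and assert that $\mu\big(A\setminus\bigcup_N A_{\epsilon,N}\big)=0$; this step fails, because between two consecutive recurrence times $t_n$ and $t_{n+1}$ the gap $t_{n+1}-t_n$ can be unbounded (indeed for non-convex-cocompact $\Gamma$ the generic excursion depths are unbounded), and on scales $r\in(e^{-t_{n+1}}, e^{-t_n})$ you have no control on $\mu(B(\xi,r))$. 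Bridging a gap of length $L_n:=t_{n+1}-t_n$ only yields $\mu(B(\xi,r))\le C\,(re^{L_n})^{\delta_F}$, which is useless when $L_n$ is comparable to $t_n$.

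The paper resolves exactly this by first proving, via the Borel--Cantelli argument over the shadow families $\mathscr{S}_{\eta}$, that $\mu$-a.e.\ $\xi$ is \emph{sublinearly} conical: it lies in $\Lambda_{\eta,T}F$ for some $T$ and every $\eta>0$, meaning for every $t>T$ the ray returns $C$-close to $Fx_0$ somewhere in $[t,(1+\eta)t]$. That controls the recurrence gaps up to a multiplicative $\eta$ loss, which is what makes the ``for all small $r$'' estimate
\[
\mu\big(B(\xi,r)\cap\Lambda_{\eta,T}F\big)\le (r/C)^{\delta_F(1-\eta)}
\]
hold, and the mass distribution principle then gives $\Hdim\ge\delta_F(1-\eta)$, from which one lets $\eta\to 0$. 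Your geometric step converting a nearby orbit point into a shadow inclusion and your Frostman-type bookkeeping are both fine; what is missing is the intermediate genericity of the sublinear conical limit set, without which the claimed pointwise lower local dimension bound simply is not established.
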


\begin{proof}
Let $\eta>0$ be a small positive number. Let us stratify $\Lambda_{\eta} F$ into \[
\Lambda_{\eta, T}(F) := \left\{ \xi \in \Lambda F : \begin{array}{c}\textrm{for $\gamma = [x_0, \xi)$ and for each $t >T$,}\\
\textrm{ $\gamma([t, t+\eta t])$ is $C$-close to $Fx_0$}\end{array}\right\}.
\]
Then $\{ \Lambda_{\eta, T}F \cap A\}_{T> 0}$ is an increasing family of compact subsets of $\Lambda F$ whose union contains $A \cap \Lambda_{\eta}F$. This set has positive $\mu$-value, as $A$ has positive $\mu$-value and $\Lambda_{\eta}F$ is $\mu$-conull. Hence, there exists $T_{0}$ such that $\mu(\Lambda_{\eta, T}F \cap A) > 0$. 

Now, for each $\xi \in \Lambda_{\eta, T}F$ and for each $t > 2T$, we claim that $B(\xi, C e^{-t})$ is contained in $\mathcal{S}(gx_0, 8C)$ for some $g \in F$ with $\|g\| \ge (1-\eta)t$. Namely, let us pick $g \in F$ and $\tau \in \big[ (1-\eta) t, t \big]$ such that $d(gx_0, \gamma(\tau)) < C$; such $\tau$ and $g$ exist as $t > 2T$ and $\xi \in \Lambda_{\eta, T} F$. Now pick an arbitrary $\zeta \in B(\xi, Ce^{-t})$. Let $p \in [x_0, \zeta)$ be such that $d(x_0, p) = \tau$. Then $\triangle p x_0 \gamma(\tau)$ is an isosceles triangle with $\measuredangle p x_0 \gamma(\tau) \le C e^{-t}$. By hyperbolic geometry of $\mathbb{H}^{2}$, $p$ and $\gamma(\tau)$ are $C$-close. This implies that $\zeta$ is contained in $\mathcal{S}(gx_0, 2C)$ as desired.

By the Shadow principle (Proposition \ref{prop:Shadow}), we conclude that: \begin{obs}
For each $\xi \in \Lambda_{\eta, T}$ and $0< r < \frac{1}{C} e^{-2T}$, we have  \[
\mu\big( B(\xi, r) \cap \Lambda_{\eta, T} F \big) \le (r/C)^{\delta_{F} (1-\eta)}.
\]
\end{obs}
By the proof of \cite[Theorem 9.3.5]{nicholls1989the-ergodic}, we conclude that $\Lambda_{\eta, T} F \cap A$ has Hausdorff dimension at least $\delta_{F}(1-\eta)$. Hence, the Hausdorff dimension of $A$ is at least $\delta_F (1-\eta)$. Since this is the case for all small enough $\eta$, we conclude that $\Hdim(A) \ge \delta_{F}$. 
\end{proof}

In particular, the set of all Myrberg, sublinearly conical limit points of $\Gamma$ has Hausdorff dimension at least $\delta_{F}$. Finally recall that $\delta_{F} \ge \delta_{\Gamma} - \epsilon$. Since $\epsilon$ is arbitrary, we get $\Hdim(\Lambda_{Myr} \Gamma \cap \Lambda_{sublinear}\Gamma) \ge \delta_{\Gamma}$. Meanwhile, by \cite[Corollary 8.3.2]{nicholls1989the-ergodic}, the conical limit set of $\Gamma$ has Hausdorff dimension $\le \delta_{\Gamma}$. This ends the proof of Theorem \ref{thm:main}.

%
%

\medskip
\bibliographystyle{alpha}
\bibliography{myrb}

\end{document}